\newtheorem{theorem}{Theorem}[section]
\newtheorem{lemma}{Lemma}[section]
\newtheorem{proposition}{Proposition}[section]
\newtheorem{corollary}{Corollary}[section]
\theoremstyle{definition}
\theoremstyle{remark}
\newtheorem{remark}{Remark}[section]
\newtheorem*{rep@theorem}{\rep@title}
\newcommand{\newreptheorem}[2]{%
\newenvironment{rep#1}[1]{%
 \def\rep@title{#2 \ref{##1}}%
 \begin{rep@theorem}}%
 {\end{rep@theorem}}}
\numberwithin{equation}{section}
\newcommand{\one}[1]{{\mathbbm{1}}_{{#1}}}
\newcommand{\eps}{\epsilon}
\newcommand{\norm}[1]{\lVert{#1}\rVert}
\newcommand{\Norm}[1]{\left\lVert{#1}\right\rVert}
\newcommand{\EE}[1]{\mathbb{E}\left[{#1}\right]}
\newcommand{\Ep}[2]{\mathbb{E}_{#1}\left[{#2}\right]}
\renewcommand{\O}[1]{\mathbf{O}\left({#1}\right)}
\def\R{\mathbb{R}}
\DeclareMathOperator{\nei}{ne}
\DeclareMathOperator{\supp}{supp}
\DeclareMathOperator{\pr}{Prob}
\DeclareMathOperator{\var}{Var}
\newcommand{\bb}{\mathrm{b}}
\renewcommand{\ln}{\log\mathrm{L}} 
\newcommand\wh{\widehat}
\begin{document}

\begin{frontmatter}

\title{High-dimensional Ising model selection with Bayesian
  information criteria} 
\runtitle{Ising model selection}

\author{\fnms{Rina} \snm{Foygel Barber}\ead[label=e1]{rina@uchicago.edu}}
\address{Department of Statistics\\ The University of Chicago\\
  Chicago, IL 60637, U.S.A.\\ \printead{e1}}
\and
\author{\fnms{Mathias} \snm{Drton}\ead[label=e2]{md5@uw.edu}}
\address{Department of Statistics\\ University of Washington\\
  Seattle, WA 98195, U.S.A.\\ \printead{e2}}

\runauthor{R.F.~Barber, M.~Drton}

\begin{abstract}
  We consider the use of Bayesian information criteria for selection
  of the graph underlying an Ising model.  In an Ising model, the full
  conditional distributions of each variable form logistic regression
  models, and variable selection techniques for regression allow one
  to identify the neighborhood of each node and, thus, the entire
  graph.  We prove high-dimensional consistency results for this
  pseudo-likelihood approach to graph selection when using Bayesian
  information criteria for the variable selection problems in the
  logistic regressions.  The results pertain to scenarios of sparsity,
  and following related prior work the information criteria we
  consider incorporate an explicit prior that encourages sparsity.
\end{abstract}

\begin{keyword}[class=MSC]
\kwd{} 62F12 62J12
\end{keyword}

\begin{keyword}
\kwd{Bayesian information criterion}
\kwd{graphical model}
\kwd{logistic regression}
\kwd{log-linear model}
\kwd{neighborhood selection}
\kwd{variable selection}
\end{keyword}



\end{frontmatter}

\section{Introduction}
\label{sec:introduction}

Let $Z_1,\dots,Z_p$ be binary random variables with values in
$\{-1,1\}$, and let $G=(V,E)$ be an undirected graph with vertex set
$V=[p]:=\{1,\dots,p\}$ and edge set $E$ whose elements are unordered
pairs of distinct vertices that we denote by a set of two nodes
$\{v,w\}$.  The (symmetric) Ising model associated to $G$ postulates
that
\begin{equation}
  \label{eq:ising}
  \pr(Z_1=z_1,\dots,Z_p=z_p) \propto \exp\big\{\textstyle\sum_{\{v,w\}\in E}
  \theta_{vw} z_vz_w\big\},
\end{equation}
for values $z_1,\dots,z_p\in\{-1,1\}$ and interaction parameters
$\theta_{ij}\in\mathbb{R}$.  The Ising model is a special case of more
general graphical log-linear or Markov random field models
\citep{lauritzen:1996} but it is of importance in its own right; see
e.g.~\cite{roudi:2009} or the monograph of \cite{kindermann:1980}.  In
this paper we will treat the problem of selecting the graph $G$ based
on a random sample drawn from a distribution in such an Ising model,
complementing recent work on this problem by
\cite{anandkumar:etal:2012}, \cite{ravikumar:2010},
\cite{santhanam:wainwright:2012} and \cite{loh:wainwright:2014}.

The model selection procedure we consider uses a pseudo-likelihood
approach based on conditional distributions, as popularized by
\cite{besag:1972,besag:1974}.  Let
\[
\nei(v) = \{ w\in V\setminus\{v\} \::\: \{v,w\}\in E\}
\]
be the set of neighbors of node $v$ in the graph $G=(V,E)$.
Assuming~(\ref{eq:ising}), the full conditional distributions satisfy
\begin{equation}
  \label{eq:ising-logits}
  \log\left(\frac{\pr(Z_v=1\,|\,Z_w=z_w \ \forall \ w\not=v)}{1-
      \pr(Z_v=1\,|\,Z_w=z_w \ \forall \ w\not=v)} \right)=
  \sum_{w\in\nei(v)} \beta_{vw} z_w,
\end{equation}
where $\beta_{vw}=2\theta_{vw}$.  Hence, for each variable $Z_v$,
the conditional distributions form a logistic regression
model with $Z_v$ as response and the remaining variables $Z_w$ for all
$w\not=v$ as covariates.  Selection of the graph $G=(V,E)$ can thus
be achieved by identifying each neighborhood $\nei(v)$ by
variable selection in each of the $p=|V|$ logistic regression problems
given by~(\ref{eq:ising-logits}).

Strictly speaking, we have $\beta_{vw}=\beta_{wv}$ in the system of
logistic regression models in~(\ref{eq:ising-logits}).  However, we
will treat the neighborhood selection approach in the version that
uncouples the parameters, that is, we allow the pair
$(\beta_{vw},\beta_{wv})$ to range freely in $\mathbb{R}^2$.  This
allows one to treat the $p$ regression problems separately, which
brings about simplifications with regards to computation as well as
theoretical analysis; compare the work on $\ell_1$-penalization
methods by \cite{ravikumar:2010} and by \cite{meinshausen:2006} who
treat the Gaussian case.  \cite{hofling:2009} demonstrated empirically
that this decoupling of $\beta_{vw}$ and $\beta_{wv}$, when addressing
inferential inconsistencies as described in
Section~\ref{sec:pract-cons} below, does not lead to any important
loss in statistical efficiency for selection of the graph $G$ in an
Ising model (at least in the higher-dimensional settings that these
authors and also we have in mind here).
  \cite{hofling:2009} also
showed that, for selection of the graph underlying an Ising model,
pseudo-likelihood methods fare as well as computationally more
involved methods based on the actual joint distribution.  We remark
that while we focus on $\ell_1$-penalization techniques in our later
numerical experiments, the problem of recovering the edges of $G$ in a
high-dimensional setting can also be solved by greedy search methods
\citep{jalali2011learning}.

In this paper, we explore the use of Bayesian information criteria in
the logistic neighborhood selection approach.  Consider a logistic
regression model that includes a subset $J$ of a set of $p$
covariates.  For sample size $n$, and defined for minimization, the
classical Bayesian information criterion (BIC) of \cite{schwarz:1978}
is the model score
\[
\mathrm{BIC}_0(J) = -2\log L(\hat\beta_J) + |J| \log(n),
\]
where $\hat\beta_J$ is the maximum likelihood estimator in the model
given by $J$.  The BIC is well-known to yield variable selection
consistency in the asymptotic scenario in which the sample size $n$
grows large while the number of covariates $p$ remains constant.  It
has been observed, however, that the BIC tends to overselect variables
in regression problems in which $p$ is of substantial size compared to
$n$ \citep{broman:2002}.  To address this problem, a number of
extensions have been proposed and analyzed
\citep{bogdan:2004,chen:chen:2008,chen:chen:2012, frommlet:2012}.  The
main idea for these extensions is to incorporate into the BIC
an explicit prior on the set of considered models.  The priors
specified in the mentioned earlier work are equivalent for
our purposes, as shown in \cite{zak-szatkowska:2011}. Following
  \cite{zak-szatkowska:2011}, we will treat the criterion
\begin{equation}
  \label{eq:ebic}
  \mathrm{BIC}_\gamma(J) = -2\log L(\hat\beta_J) + |J| \big( \log(n)
    + 2\gamma\log(p)\big),
\end{equation}
which is associated with a choice of $\gamma\ge 0$.  For a review and
pointers to prior work that suggests and evaluates defaults for
$\gamma$, or a quantity corresponding to $\gamma$; see
\cite{zak-szatkowska:2011}.  In particular, the choice of $\gamma=1$
is associated with assigning equal prior probability to each set
\[
\mathcal{J}_k=
\{ J\subset [p] \::\: |J|=k\}, \qquad k=0,\dots,q,
\]
where $q$ is an a priori bound on the size of the models; therefore
for each $k\leq q$, any given
 model of size $k$ has probability proportional to $1/|\mathcal{J}_k|$
of being chosen.  The
connection to this prior, which is also considered in
\cite{scott:berger:2010}, is due to the fact that 
\[
|\mathcal{J}_k| =\binom{p}{k}
\]
scales as $p^k$ for small $k\le q\le p/2$.  In~(\ref{eq:ebic}), this
contribution of the prior on models appears as the term $|J|\log(p)$.
Note that~(\ref{eq:ebic}) has the maximum of the log-likelihood
function multiplied by two and, hence, the additional factor of two.
This justifies the criterion \eqref{eq:ebic} for model selection in 
regression.

Now we turn back to the graphical model setting.
By analogy, the prior for Ising model selection has to be specified on
the set of graphs with $p$ nodes and there are
\[
\binom{\binom{p}{2}}{k} \sim p^{2k}
\]
graphs with $k$ edges.  This suggests that for Ising model selection,
$\gamma$ should be chosen roughly twice as large as for variable
selection in a single logistic regression model.  The cutoffs for
$\gamma$ that appear in our theoretical analysis are in agreement with
this intuition (compare Corollary~\ref{cor:ebic-consist-poly} and
Theorem~\ref{thm:cons-ebic-ising}.)

In this paper we show that using $\mathrm{BIC}_\gamma$ for variable
selection in the logistic neighborhood selection approach allows one
to consistently estimate the graph of an Ising model.  Our focus is on
higher-dimensional problems under sparsity, that is, problems in which
the number of variables $p$ may be large, the sample size $n$ may be
comparatively moderate, but the neighborhood sizes are bounded by an
integer $q$ that is small compared to $p$.  Briefly put, under the
conditions we impose, $\mathrm{BIC}_\gamma$ can successfully identify
the graph if $n$ exceeds a constant multiple of $q^3\log(p)$, which
agrees with the rates found in \cite{ravikumar:2010} and
\cite{santhanam:wainwright:2012}.

Our work builds on ideas of \cite{chen:chen:2012}
and \cite{luo:chen:2013:sii} who analyze the performance of
$\mathrm{BIC}_\gamma$ for variable selection in generalized linear
models.  Their work makes assumptions on a sequence of
fixed/deterministic design matrices that ensure that the Hessian of
the log-likelihood function is well-behaved.  In contrast, the
conditional distributions in~(\ref{eq:ising-logits}) have random
covariates.  We thus develop suitable conditions on the joint
distribution of random covariates in logistic regression that, in
particular, ensure that the deterministic conditions imposed in
\cite{luo:chen:2013:sii} hold with high probability.  
The conditions we give allow us to deduce
consistency of $\mathrm{BIC}_\gamma$ in Ising model selection.  For
growing $p$, this involves a growing number of logistic regression
problems and requires us to make some of the intermediate results in
\cite{luo:chen:2013:sii} more explicit.

The paper is organized as follows.  Section~\ref{sec:logist-reg}
provides finite-sample results for logistic regression.  The main
technical result is Theorem~\ref{thm:RandomToHessian}, which considers
the setting with random covariates and gives conditions that provide
control of the Hessian of the log-likelihood function.
Theorem~\ref{thm:HessianToLikelihood} shows how a well-behaved Hessian
leads to bounds on likelihood ratios and is closely related to the
prior work of \cite{chen:chen:2012} and \cite{luo:chen:2013:sii}.  The
proofs for both these theorems are deferred to
parts~\ref{sec:LikelihoodAndScore} and~\ref{sec:Hessian} of the
Appendix, where part~\ref{sec:technical-lemmas} contains technical
lemmas.  As a consequence of Theorems~\ref{thm:RandomToHessian}
and~\ref{thm:HessianToLikelihood}, we can clarify in
Section~\ref{sec:cons-ebic-logist} the consistency of
$\mathrm{BIC}_\gamma$ in logistic regression with random covariates.
In Section~\ref{sec:cons-ebic-ising}, we extend the consistency result
to Ising models.  Some of the conditions imposed in our work
involve third moments, and we show in part~\ref{sec:why-are-second} of
the Appendix that those cannot be weakened to conditions on second
moments.  We conclude with numerical experiments on simulated and real data, see
Sections~\ref{sec:pract-cons} and~\ref{sec:experiments}, and a
discussion in Section~\ref{sec:discussion}.

\section{Logistic regression with random covariates}
\label{sec:logist-reg}

\subsection{Setup}

Let $(X_1,Y_1),\dots,(X_n,Y_n)$ be $n$ observations that each pair a
binary response $Y_i\in\{0,1\}$ and a covariate vector $X_i\in\R^p$.
Suppose that the pairs $(X_i,Y_i)$ are independent and identically
distributed, and that the responses follow a logistic regression model
conditional on the covariates.  Let $\pi_i(x)$ be the conditional
probability that $Y_i=1$ given $X_i=x$.  The logistic regression
model states that
\[
\log\left( \frac{\pi_i(x)}{1-\pi_i(x)}\right)= x^\top\beta_0
\]
for some unknown parameter vector $\beta_0\in\R^p$.  Define the
cumulant function $\mathrm{b}(\theta)=\log(1+e^{\theta})$.
Conditional on the $X_i$, the logistic regression model for the
responses $Y_i$ has log-likelihood, score, and negative Hessian
functions
\begin{align*}
  \ln(\beta)&=\sum_{i=1}^n Y_i\cdot X_i^{\top}\beta - 
  \mathrm{b}(X_i^{\top}\beta)\;\in\;\R\;,\\
  s(\beta)&=\sum_{i=1}^n X_i \left(Y_i - 
    \mathrm{b}'(X_i^{\top}\beta)\right)\;\in\;\R^p\;,\\
  H(\beta)&=\sum_{i=1}^n X_iX_i^{\top}\cdot 
  \mathrm{b}''(X_i^{\top}\beta)\;\in\;\R^{p\times p}\;,
\end{align*}
with the derivatives of the cumulant function being
\begin{equation}
  \label{eq:b':b''}
  \mathrm{b}'(\theta) = \frac{e^\theta}{1+e^\theta}, \quad
  \mathrm{b}''(\theta) = \frac{e^\theta}{\left(1+e^\theta\right)^2}.
\end{equation}

We will be interested in scenarios in which $\beta_0$ is sparse, and we
wish to recover the support of $\beta_0$, that is, the set
\[
J_0=\supp(\beta_0):=\{j\in[p]\::\: \beta_{0j}\not= 0\},
\]
which gives the most parsimonious (most sparse) true model.
We assume that an upper bound $q$ on the size of the support is given, that
is, $|J_0|\leq q$.  
Later, the bound $q$ is allowed to grow in an asymptotic scenario in
which the number of covariates $p$ may grow with the sample size $n$.
To avoid triviality, we assume $n,p\geq 2$ throughout.  Similarly, we
assume $q\ge 1$ without further mention.

The conditions we impose below are formulated in terms of the marginal
distribution of the covariate vectors $X_i$ and
pertain to the tail behavior of the entries of $X_i$ as well as the
possible dependences among them.  We will show that our conditions
entail that, with large probability, the covariates satisfy
deterministic Hessian conditions that \cite{luo:chen:2013:sii} used to
establish consistency properties of $\mathrm{BIC}_\gamma$ for
generalized linear models with fixed design.  These conditions concern
sparse submodels of our logistic regression model given by support
sets $J\subseteq[p]$.  

\paragraph{Notation for submodels.}   
The parameters of the submodel given by a set $J$ are regression
coefficients that form a vector of length $|J|$.  We index such
vectors $\beta$ by the elements of $J$, that is, $\beta=(\beta_j:j\in
J)$, and similarly write $\R^J$ for the parameter space comprising all
these coefficient vectors.  This way the index of a coefficient always
coincides with the index of the covariate it belongs to.
In other words, the coefficient for the $j$-th coordinate of covariate vector $X_i$ is denoted by $\beta_j$ in any model $J$ with $j\in J$.

Furthermore, it is at times convenient to identify a vector
$\beta\in\R^J$ with the vector in $\R^p$ that is obtained from $\beta$
by filling in zeros outside of the set $J$.  As this is clear from the
context, we simply write $\beta$ again when referring to this sparse
vector in $\R^p$.  Finally, $s_J(\beta)$ and $H_J(\beta)$ denote
the subvector and submatrix of $s(\beta)$ and $H(\beta)$,
respectively, obtained by extracting entries indexed by $J$.

\subsection{Hessian conditions when covariates are random}

\cite{luo:chen:2013:sii} 
invoke conditions on a sequence of deterministic designs to control
the curvature and change of the Hessian of the log-likelihood
function.  Specifically, the eigenvalues of $\frac{1}{n}H_J(\beta_0)$
for all sparse $J\supseteq J_0$ are assumed to be bounded above and
below, and furthermore for any $\eps$, there is a $\delta>0$ such that
\begin{equation}
  \label{LuoChenBoundedChangeAssump}
  (1-\eps)H_J(\beta_0)\preceq 
  H_J(\beta)\preceq (1+\eps)H_J(\beta_0)\;,
\end{equation}
for all $J\supseteq J_0$ and $\beta\in\R^J$ with
$\norm{\beta-\beta_0}_2\leq \delta$.  The notation ``$\preceq$''
refers to the ordering in the positive semidefinite cone with
$A\preceq B$ whenever $0\preceq B-A$, i.e., $B-A$ is positive
semidefinite.  The above conditions are assumed to hold uniformly
for all large enough sample sizes $n$ and associated values of $p$,
$q$ and $\beta_0$, which may change with $n$.

In this work, we begin instead with random and i.i.d.~covariates
$X_1,\dots,X_n$ and derive stronger versions of these Hessian
conditions from the below conditions on the distribution of each
covariate $X_i$.
We refer to a vector $u\in\R^p$ as $q$-sparse if $|\supp(u)|\le q$.
Let $a_1,a_2,a_3>0$ be constants that are fixed throughout the
remainder of this section.  Using $X_1=(X_{11},\dots,X_{1p})^\top$ as
a representative, we will say that the i.i.d.~covariates
satisfy assumptions \ref{ass:A1}-\ref{ass:A3} with respect to an
integer $q\ge 1$ if the following holds:

\begin{enumerate}[label=(A\arabic*)]
\item \label{ass:A1} For any $q$-sparse unit vector $u$, $\EE{(X_1^{\top}u)^2}\geq a_1$. 
\item \label{ass:A2} For any $q$-sparse unit vector $u$, $\EE{|X_1^{\top}u|^3}\leq a_2$.
\item \label{ass:A3} For each $j\in[p]$, the variable $X_{1j}$ is
  bounded 
  as $|X_{1j}|\leq a_3$.
\end{enumerate}

Rephrased, \ref{ass:A1} states that for any subset $J\subset[p]$ of
cardinality $|J|\leq q$ the smallest eigenvalue of the 
matrix $\EE{X_{1J}X_{1J}^{\top}}$ is at least $a_1$. (Here,
$X_{1J}=(X_{1j}:j\in J)$ is the subvector of $X_1$ induced by $J$.)
Assumption~\ref{ass:A2} guarantees the existence of third moments of
linear combinations of $q$ or fewer covariates.  In an Ising model all
variables are bounded and thus \ref{ass:A3} always holds.\footnote{A
  weaker condition requiring only that each $X_{1j}$ is subgaussian
  was considered in a preprint version of this paper
  \citep{oldversion}.  The same results were obtained, but at the cost
  of additional log factors in the sample size---specifically, with a
  sample size requirement of $n\gtrsim q^3\log^3(np)$ instead of
  $n\gtrsim q^3\log(p)$ as in the theorems  in this paper.}

According to the following theorem, our assumptions entail
well-behaved Hessians with large probability.  In
this theorem and throughout the rest of the paper, the norm $\|H\|$ of
a matrix $H$ is the spectral norm.

\begin{theorem}\label{thm:RandomToHessian}
Suppose that the covariates satisfy conditions \ref{ass:A1}-\ref{ass:A3} for
some sparsity level $q$ and some constants $a_1,a_2,a_3>0$. Then
  there exist constants
  $c_{\mathrm{sample}},c_{\mathrm{change}},c_{\mathrm{prob}}>0$, a
  decreasing function $c_{\mathrm{lower}}:[0,\infty)\rightarrow
  (0,\infty)$ and an increasing function
  $c_{\mathrm{upper}}:[0,\infty)\rightarrow (0,\infty)$, all depending
  only on $(a_1,a_2,a_3)$, such that
  if 
  \[
  n \ge 
  c_{\mathrm{sample}}\cdot q^3\log(p),
  \]
  then the event that, simultaneously for all $|J|\leq q$ and all
  $\beta,\beta'\in\R^J$,
  \begin{equation}
    \label{Hess1}
    c_{\mathrm{lower}}(\norm{\beta}_2)\mathbf{I}_J\preceq 
    \frac{1}{n}H_J(\beta)\preceq 
    c_{\mathrm{upper}}(\norm{\beta}_2)\mathbf{I}_J
  \end{equation}
  and
  \begin{equation}
    \label{Hess2}
    \frac{1}{n}\norm{H_J(\beta) - H_J(\beta')} \leq c_{\mathrm{change}}\cdot
    \norm{\beta-\beta'}_2\;
  \end{equation}
  has probability at least  
  \[
  1-\exp\left\{-\,c_{\mathrm{prob}}\cdot \frac{n}{q^3}\right\}.
  \]
\end{theorem}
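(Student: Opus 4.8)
The plan is to reduce both \eqref{Hess1} and \eqref{Hess2} to two uniform concentration statements about empirical averages of elementary functions of the covariates, and then to establish those by Bernstein's inequality combined with an $\epsilon$-net over each $q$-dimensional coordinate subspace $\R^J$ and a union bound over the $\binom{p}{q}\le p^q$ possible supports $J$.

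First I would set up the reduction. Write $H_J(\beta)=\sum_{i=1}^n X_{iJ}X_{iJ}^\top\,\mathrm{b}''(X_{iJ}^\top\beta)$ and recall that $\mathrm{b}''$ is even, decreasing in $|\theta|$ with $\mathrm{b}''\le\tfrac14$, and globally Lipschitz with $\sup_{\theta}|\mathrm{b}'''(\theta)|<\infty$. Since $\mathrm{b}''\le\tfrac14$, the upper bound in \eqref{Hess1} holds with a constant $c_{\mathrm{upper}}$ as soon as the largest eigenvalue of $\frac1n\sum_{i}X_{iJ}X_{iJ}^\top$ is bounded above, uniformly in $J$. For the lower bound, fix a unit vector $u\in\R^J$, write $\beta=\norm{\beta}_2\,v$ with $v$ a unit vector, and choose a threshold $M=M(\norm{\beta}_2)$ proportional to $\norm{\beta}_2$; using $\mathrm{b}''(X_{iJ}^\top\beta)\ge\mathrm{b}''(M)\,\one{|X_{iJ}^\top\beta|\le M}$ and $\one{|X_{iJ}^\top\beta|>M}\le|X_{iJ}^\top\beta|/M$ gives
\[
\frac1n\,u^\top H_J(\beta)\,u\;\ge\;\mathrm{b}''(M)\left(\frac1n\sum_{i}(X_{iJ}^\top u)^2\;-\;\frac{\norm{\beta}_2}{M}\cdot\frac1n\sum_{i}(X_{iJ}^\top u)^2\,|X_{iJ}^\top v|\right).
\]
Choosing $M$ a sufficiently large multiple of $\norm{\beta}_2$ (the multiple depending only on $a_1,a_2$) makes the subtracted term at most $a_1/4$, provided $\frac1n\sum_{i}(X_{iJ}^\top u)^2\ge a_1/2$ and $\frac1n\sum_{i}(X_{iJ}^\top u)^2|X_{iJ}^\top v|$ stays bounded; this yields \eqref{Hess1} with the decreasing function $c_{\mathrm{lower}}(\cdot)=\tfrac{a_1}{4}\,\mathrm{b}''(M(\cdot))$. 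For \eqref{Hess2}, the Lipschitz bound on $\mathrm{b}''$ gives, for unit $u\in\R^J$ and $v'=(\beta-\beta')/\norm{\beta-\beta'}_2$,
\[
\frac1n\left|u^\top\!\left(H_J(\beta)-H_J(\beta')\right)\!u\right|\;\le\;\sup_{\theta}|\mathrm{b}'''(\theta)|\cdot\norm{\beta-\beta'}_2\cdot\frac1n\sum_{i}(X_{iJ}^\top u)^2\,|X_{iJ}^\top v'|,
\]
which involves the same empirical average. Thus everything reduces to controlling, uniformly over all $|J|\le q$ and all unit vectors $u,v\in\R^J$, the average $\frac1n\sum_{i}(X_{iJ}^\top u)^2$ from both sides and the average $\frac1n\sum_{i}(X_{iJ}^\top u)^2|X_{iJ}^\top v|$ from above.

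Next I would prove these two uniform bounds. The population values come straight from the hypotheses: \ref{ass:A1} gives $\EE{(X_1^\top u)^2}\ge a_1$; the power-mean inequality and \ref{ass:A2} give $\EE{(X_1^\top u)^2}\le(\EE{|X_1^\top u|^3})^{2/3}\le a_2^{2/3}$; and $w^2|t|\le\tfrac23|w|^3+\tfrac13|t|^3$ together with \ref{ass:A2} gives $\EE{(X_1^\top u)^2|X_1^\top v|}\le a_2$ (this inequality also reduces the third average to ones of the form $\frac1n\sum_{i}|X_{iJ}^\top u|^3$, for a single vector). For the deviations I would fix $J$ and a unit vector $u$ (and $v$), observe via Cauchy--Schwarz and \ref{ass:A3} that the relevant summands are bounded by a fixed polynomial in $q$ (e.g.\ $|X_{iJ}^\top u|\le a_3\sqrt q$) with variance controlled by \ref{ass:A2}, and apply Bernstein's inequality to obtain an exponential tail at any fixed deviation level. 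Passing from an $\epsilon$-net on the unit sphere of $\R^J$, of cardinality $(C/\epsilon)^{|J|}$, to all unit vectors costs a factor governed by the Lipschitz-in-$u$ modulus of the summands, and there are at most $p^q$ sets $J$, so the union bound contributes $\exp\{O(q\log p)\}$ to the failure probability. Taking $n\ge c_{\mathrm{sample}}\,q^3\log p$ is then more than enough for the per-atom tail to absorb this factor and leave the asserted probability $1-\exp\{-c_{\mathrm{prob}}\,n/q^3\}$; carrying the constants through the power-mean and Bernstein steps shows that $c_{\mathrm{sample}},c_{\mathrm{change}},c_{\mathrm{prob}},c_{\mathrm{lower}},c_{\mathrm{upper}}$ depend only on $(a_1,a_2,a_3)$.

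The step I expect to be the main obstacle is this last balancing act. Only third moments of the covariates are assumed---no sub-Gaussianity, no higher moments---so the summands $(X_{iJ}^\top u)^2|X_{iJ}^\top v|$ are genuinely heavy-tailed, yet the per-atom tail bound must survive a union over $\sim p^q$ sparse submodels. Assumption \ref{ass:A3} is what rescues this: via Cauchy--Schwarz it caps the summands at a $\mathrm{poly}(q)$ value, which is precisely the range parameter Bernstein needs, and the tension between that $\mathrm{poly}(q)$ range and the $q\log p$ union cost is where the cubic sample-size requirement $q^3\log p$ comes from. A secondary, more technical point is that the indicator $\one{|X_{iJ}^\top\beta|\le M}$ in the lower-bound step is discontinuous in $\beta$, so one cannot simply net over $\beta$; the reduction above avoids this by routing the discarded mass through the smooth average $\frac1n\sum_{i}(X_{iJ}^\top u)^2|X_{iJ}^\top v|$, which is already being controlled for \eqref{Hess2}, so no separate $\beta$-net is needed.
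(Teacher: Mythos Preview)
Your approach is correct and reaches the claimed conclusion, but it differs from the paper's proof in two substantive respects.

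For the concentration step, the paper does not use Bernstein plus an $\epsilon$-net. It instead invokes a single bounded-differences/symmetrization/contraction inequality (its Lemma~\ref{lem:kolt}, drawn from Koltchinskii) to control $\sup_{u}\frac1n\sum_i|X_i^\top u|^k$ directly; this yields Corollary~\ref{cor:MomentsSums} with failure probability $\exp\{-n/(2a_3^{2k}q^k)\}$, and the case $k=3$ is precisely the $\exp\{-c\,n/q^3\}$ in the theorem. Your Bernstein route is more elementary and in fact nominally sharper---range $a_3^3q^{3/2}$ and variance $O(q^{3/2})$ give failure probability $\exp\{-c\,n/q^{3/2}\}$ after a union over $\exp\{O(q\log p)\}$ atoms once $n\gtrsim q^{5/2}\log p$---so the theorem's $q^3\log p$ hypothesis is, as you say, more than enough.

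The more interesting difference is the lower bound in \eqref{Hess1}. The paper does \emph{not} route the discarded indicator mass through the smooth average $\frac1n\sum_i(X_i^\top u)^2|X_i^\top v|$. It keeps the indicators and controls the two counts $\#\{i:|X_i^\top u|<w_1\}$ and $\#\{i:|X_i^\top v|>w_2\}$ separately: a Lipschitz smoothing of the indicator (Lemma~\ref{IntervalLemma}) handles the empirical-to-population step, and a Paley--Zygmund-type inequality (Lemma~\ref{AwayFromZero}) converts the moment assumptions \ref{ass:A1}--\ref{ass:A2} into the needed anti-concentration $\pr\{|X_1^\top u|\ge w_1\}\ge c$. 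Your reduction is cleaner: the single majorization $\one{|X_i^\top\beta|>M}\le|X_i^\top\beta|/M$ collapses the lower bound into exactly the same third-moment sum already controlled for \eqref{Hess2}, so no separate indicator lemma or Paley--Zygmund step is needed, and no net over $\beta$ is required. What the paper's route buys is a more explicit separation of where \ref{ass:A1} enters (only through Paley--Zygmund) versus \ref{ass:A2}; also, its $c_{\mathrm{upper}}(r)$ is affine in $r$---obtained by bounding $H_J(0)$ via second moments and then invoking \eqref{Hess2}---whereas your direct use of $\mathrm{b}''\le\tfrac14$ gives a constant $c_{\mathrm{upper}}$, which is of course also an increasing function and suffices for the theorem.
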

\noindent The proof of Theorem~\ref{thm:RandomToHessian} is given in Appendix~\ref{sec:Hessian}.

If the inequalities~(\ref{Hess1}) and~(\ref{Hess2}) hold and
$\beta\in\mathbb{R}^J$ for a set $J\supseteq J_0$, then 
\begin{align*}
  \frac{1}{n} H_J(\beta) &\preceq c_{\mathrm{change}}\cdot
  \|\beta-\beta_0\|_2 \cdot\mathbf{I}_J + \frac{1}{n} H_J(\beta_0) \\
  &\preceq \left(1+\frac{c_{\mathrm{change}}}{c_{\mathrm{lower}}(\|\beta_0\|_2)}\cdot
    \|\beta-\beta_0\|_2 \right) \frac{1}{n} H_J(\beta_0)\;.
\end{align*}
We also have the analogous lower bound, 
\begin{align*}
  \frac{1}{n} H_J(\beta) &\succeq
  \left(1-\frac{c_{\mathrm{change}}}{c_{\mathrm{lower}}(\|\beta_0\|_2)}\cdot
    \|\beta-\beta_0\|_2 \right) \frac{1}{n} H_J(\beta_0) \;.
\end{align*}
Combining these two bounds, we have proved the following version of the assumption
from~(\ref{LuoChenBoundedChangeAssump}):

\begin{proposition}
  \label{prop:hess:in:luo:chen:form}
  If the inequalities~(\ref{Hess1}) and~(\ref{Hess2}) hold, then 
  \[
  (1-\eps)H_J(\beta_0)\preceq 
  H_J(\beta)\preceq (1+\eps)H_J(\beta_0)\;
  \]
  for all $J\supseteq J_0$ and $\beta\in\mathbb{R}^J$ with 
  \begin{equation}
    \label{eq:luo:chen:delta}
    \|\beta-\beta_0\|_2\le 
    \delta := \epsilon\cdot \frac{
      c_{\mathrm{lower}}(\|\beta_0\|_2)}{c_{\mathrm{change}}}. 
  \end{equation}
\end{proposition}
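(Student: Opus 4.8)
The plan is to convert the Lipschitz estimate \eqref{Hess2} into a two-sided positive semidefinite bound and then absorb the resulting error term into $\tfrac1n H_J(\beta_0)$ using the lower eigenvalue bound in \eqref{Hess1}. Fix a set $J\supseteq J_0$ with $|J|\le q$, so that \eqref{Hess1} and \eqref{Hess2} are available for this $J$ and $\beta_0$ may be regarded as an element of $\R^J$; fix also $\beta\in\R^J$.

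First I would observe that $H_J(\beta)-H_J(\beta_0)$ is symmetric, so the spectral-norm bound \eqref{Hess2} is equivalent to the eigenvalue sandwich
\[
-\,c_{\mathrm{change}}\norm{\beta-\beta_0}_2\,\mathbf{I}_J\;\preceq\;\tfrac1n\bigl(H_J(\beta)-H_J(\beta_0)\bigr)\;\preceq\;c_{\mathrm{change}}\norm{\beta-\beta_0}_2\,\mathbf{I}_J .
\]
Next, the lower bound in \eqref{Hess1} evaluated at $\beta_0$ gives $c_{\mathrm{lower}}(\norm{\beta_0}_2)\mathbf{I}_J\preceq \tfrac1n H_J(\beta_0)$, i.e.\ $\mathbf{I}_J\preceq c_{\mathrm{lower}}(\norm{\beta_0}_2)^{-1}\cdot\tfrac1n H_J(\beta_0)$. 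Since the scalar $c_{\mathrm{change}}\norm{\beta-\beta_0}_2$ is nonnegative, multiplying this inequality by it and substituting into the previous sandwich yields
\[
-\,\kappa\cdot\tfrac1n H_J(\beta_0)\;\preceq\;\tfrac1n\bigl(H_J(\beta)-H_J(\beta_0)\bigr)\;\preceq\;\kappa\cdot\tfrac1n H_J(\beta_0),\qquad \kappa:=\frac{c_{\mathrm{change}}\norm{\beta-\beta_0}_2}{c_{\mathrm{lower}}(\norm{\beta_0}_2)} .
\]
Rearranging gives $(1-\kappa)\,\tfrac1n H_J(\beta_0)\preceq \tfrac1n H_J(\beta)\preceq (1+\kappa)\,\tfrac1n H_J(\beta_0)$, and multiplying through by $n$ clears the $1/n$ factors.

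It then remains to note that the radius prescribed in \eqref{eq:luo:chen:delta} makes $\kappa$ small: if $\norm{\beta-\beta_0}_2\le\delta=\epsilon\,c_{\mathrm{lower}}(\norm{\beta_0}_2)/c_{\mathrm{change}}$ then $\kappa\le\epsilon$, giving exactly $(1-\epsilon)H_J(\beta_0)\preceq H_J(\beta)\preceq (1+\epsilon)H_J(\beta_0)$; since $J$ and $\beta$ were arbitrary subject to the stated constraints, this is the assertion. I do not expect a genuine obstacle here — the argument is a short matrix-ordering manipulation, essentially the two displayed computations already sketched in the text preceding the statement. The only points requiring a little care are that the spectral-norm bound converts to a two-sided PSD bound precisely because $H_J(\beta)-H_J(\beta_0)$ is symmetric, and that the error coefficient $c_{\mathrm{change}}\norm{\beta-\beta_0}_2$ is nonnegative, so that inserting $\mathbf{I}_J\preceq c_{\mathrm{lower}}(\norm{\beta_0}_2)^{-1}\cdot\tfrac1nH_J(\beta_0)$ preserves the direction of the inequality.
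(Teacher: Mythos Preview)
Your argument is correct and is essentially identical to the paper's: the paper also converts the spectral-norm bound \eqref{Hess2} into a two-sided PSD sandwich, replaces $\mathbf{I}_J$ via the lower bound in \eqref{Hess1} at $\beta_0$, and then observes that the resulting coefficient $c_{\mathrm{change}}\|\beta-\beta_0\|_2/c_{\mathrm{lower}}(\|\beta_0\|_2)$ is at most $\epsilon$ under \eqref{eq:luo:chen:delta}. As you note yourself, these are precisely the two displayed computations immediately preceding the proposition.
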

\begin{remark}
  Although this proposition only treats true models (i.e., models $J$
  that contain the true support $J_0$), it will be used also for proving
  that the BIC will not select a false model (i.e., a model
  $J\not\supset J_0$).  The connection lies in observing that, for a
  model $J\not\supset J_0$, the proposition can be applied to analyze
  the model given by the union $J\cup J_0$, which is a true model.
\end{remark}

\subsection{Bounds on
  likelihood ratios from Hessian conditions}


The following theorem provides bounds on log-likelihood ratios for
sparse models indexed by $J$ versus the smallest true model indexed by
$J_0$.  The result concerns fixed values for the covariates
$X_1,\dots,X_n$ that satisfy the Hessian conditions~(\ref{Hess1})
and~(\ref{Hess2}) from Theorem~\ref{thm:RandomToHessian}.  The
statement of the result makes reference to constants from
Theorem~\ref{thm:RandomToHessian}.  We also invoke an upper bound
$a_0$ on the signal; some control of the norm of $\beta_0$ is needed
to avoid degeneracy of the conditional distribution of the binary
response variable.


\begin{theorem}\label{thm:HessianToLikelihood}
  Let $\beta_0$ be the true parameter with $J_0=\supp(\beta_0)$ and
  $\norm{\beta_0}_2\leq a_0$ for a constant $a_0>0$.  Fix
  $\eps,\nu>0$, and condition on the covariates $X_1,\dots,X_n$ 
  satisfying the Hessian conditions~(\ref{Hess1})
  and~(\ref{Hess2}) for all $J\supseteq J_0$ with $|J|\le 2q$, where
  $q\ge |J_0|$.  Then there exist constants
  $C_\mathrm{false},C_\mathrm{dim},C_{\mathrm{sample},1},C_{\mathrm{sample},2}>0$,
  depending only on $(c_{\mathrm{change}},
  c_{\mathrm{lower}}(a_0),c_{\mathrm{upper}}(a_0))$ and on the chosen
  pair $(\eps,\nu)$, such that if
  \[
  p \ge C_\mathrm{dim}\quad\text{and}\quad n \ge \max\left\{ C_{\mathrm{sample},1}\cdot q^3\log(p)
  , C_{\mathrm{sample},2}\cdot \frac{q\log(p)}{\min_{j\in J_0}|(\beta_0)_j|^2}\right\} ,
  \] 
  the following two statements hold simultaneously with conditional
  probability at least $1-p^{-\nu}$:
  \begin{enumerate}[label=(\alph*)]
  \item \label{thm2:a} For all $|J|\leq q$ with
    $J\supseteq J_0$,
    \[
    \ln(\wh{\beta}_J)-\ln(\wh{\beta}_{J_0})\;\leq\;
    (1+\eps)(|J\backslash J_0|+\nu)\log(p)\;.
    \]
  \item \label{thm2:b} For all $|J|\leq q$ with
    $J\not\supset J_0$,
    \[
    \ln(\wh{\beta}_{J_0})-\ln(\wh{\beta}_J)\;\geq\;
    C_\mathrm{false} \,n\min_{j\in J_0}|(\beta_0)_j|^2\;.
    \]
  \end{enumerate}
\end{theorem}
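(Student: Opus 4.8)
Throughout we condition on the covariates $X_1,\dots,X_n$; given these, the $Y_i$ are independent with mean $\mathrm{b}'(X_i^\top\beta_0)\in(0,1)$, so $s(\beta_0)=\sum_i X_i(Y_i-\mathrm{b}'(X_i^\top\beta_0))$ is a sum of independent bounded mean-zero vectors. Before treating \ref{thm2:a} and \ref{thm2:b} we assemble three ingredients, each valid with conditional probability at least $1-\tfrac14 p^{-\nu}$ so that the total exceptional probability is at most $p^{-\nu}$. First, a technical lemma (of the type collected in part~\ref{sec:technical-lemmas} of the Appendix) guarantees that every maximizer $\wh\beta_J$ with $|J|\le 2q$ exists and satisfies $\norm{\wh\beta_J}_2\le\bar a$ for a constant $\bar a$; this lets us use the fixed bounds $\underline c:=c_{\mathrm{lower}}(\bar a)$ and $\overline c:=c_{\mathrm{upper}}(\bar a)$ in place of the norm-dependent ones on the region of interest, and it also gives $\lambda_{\min}\big(\tfrac1n H_J(\beta_0)\big)\ge\underline c$. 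Second, using boundedness of the covariates (cf.\ assumption~\ref{ass:A3}) together with $\tfrac1n\sum_i X_iX_i^\top\preceq\overline c\,\mathbf I$ on the conditioned event, a Bernstein estimate on a net of the unit sphere of $\R^S$ plus a union bound over the $\binom{p}{|S|}\le p^{|S|}$ index sets of a given size yields $\norm{s_S(\beta_0)}_2^2\le C\,n(|S|+\nu)\log p$ simultaneously for all $|S|\le 2q$. Third, combining this with strong concavity (\eqref{Hess1}) shows $\norm{\wh\beta_J-\beta_0}_2\lesssim\sqrt{q\log p/n}$ for every true model $J\supseteq J_0$ with $|J|\le 2q$, which is at most the $\delta$ of Proposition~\ref{prop:hess:in:luo:chen:form} once $n\ge C_{\mathrm{sample},1}q^3\log p$.

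For part~\ref{thm2:a}, fix a true model $J\supseteq J_0$ with $|J|\le q$. Since $\wh\beta_{J_0}$ is stationary for $\ln$ restricted to $\R^{J_0}\subseteq\R^J$, the gradient of $\ln|_{\R^J}$ at $\wh\beta_{J_0}$ is supported on $J\setminus J_0$, and a two-term Taylor expansion whose remainder is controlled by \eqref{Hess2} and the consistency estimate gives
\[
\ln(\wh\beta_J)-\ln(\wh\beta_{J_0})=\tfrac1{2n}\,\widetilde{W}_J^\top S_J^{-1}\widetilde{W}_J+R_J,\qquad |R_J|\le\eps\,(|J\setminus J_0|+\nu)\log p,
\]
where $\widetilde{W}_J=s_{J\setminus J_0}(\beta_0)-H_{J\setminus J_0,\,J_0}(\beta_0)\,H_{J_0}(\beta_0)^{-1}s_{J_0}(\beta_0)$ is the score on $J\setminus J_0$ adjusted for $J_0$ and $S_J$ is the Schur complement of $\tfrac1n H_{J_0}(\beta_0)$ in $\tfrac1n H_J(\beta_0)$; the bound on $R_J$ holds once $n\ge C_{\mathrm{sample},1}q^3\log p$. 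The crucial algebraic fact is that, conditionally on the covariates, $\cov(\widetilde{W}_J)=nS_J$ exactly, so $S_J^{-1/2}\widetilde{W}_J$ is a sum of independent mean-zero vectors in $\R^{J\setminus J_0}$ with covariance $n\mathbf I$ and, by assumption~\ref{ass:A3}, with each summand of norm $\O{\sqrt q}$. A Bernstein bound over a fine net of the sphere of $\R^{J\setminus J_0}$---with the union bound \emph{stratified by} $d:=|J\setminus J_0|$ so that it costs only $d\log p$ rather than $q\log p$---then yields $\tfrac1{2n}\widetilde{W}_J^\top S_J^{-1}\widetilde{W}_J\le(1+\eps)(d+\nu)\log p$ uniformly over such $J$, once $n\ge C_{\mathrm{sample},1}q^3\log p$ (so that the Bernstein variance proxy is $(1+o(1))\,n$ in the relevant range of deviations) and $p\ge C_{\mathrm{dim}}$ (to absorb the lower-order $\O{d}$, $\O{\sqrt{d(d+\nu)\log p}}$ and net-cardinality contributions into the factor $1+\eps$). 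Combining with the bound on $R_J$ and renaming constants gives~\ref{thm2:a}.

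For part~\ref{thm2:b}, fix $J\not\supset J_0$, so $J_0\setminus J\neq\emptyset$. Apply an exact second-order Taylor expansion of $\ln$ at $\beta_0$ evaluated at $\wh\beta_J$ (extended by zeros): with $\delta:=\wh\beta_J-\beta_0$, which is supported on $J\cup J_0$ with $|J\cup J_0|\le 2q$ and satisfies $\norm{\delta}_2\ge\norm{(\beta_0)_{J_0\setminus J}}_2\ge\min_{j\in J_0}|(\beta_0)_j|=:\beta_{\min}$, the Hessian lower bound on $J\cup J_0$ (where conditioning on \eqref{Hess1} for sets of size up to $2q$ is used) yields
\[
\ln(\wh\beta_J)-\ln(\beta_0)\;\le\;\norm{s_{J\cup J_0}(\beta_0)}_2\,\norm{\delta}_2-\tfrac{\underline c}{2}\,n\,\norm{\delta}_2^2\;=:\;\phi(\norm{\delta}_2).
\]
The parabola $\phi$ is strictly decreasing beyond $r^\star:=\norm{s_{J\cup J_0}(\beta_0)}_2/(\underline c\,n)$, and the second sample-size requirement $n\ge C_{\mathrm{sample},2}\,q\log p/\beta_{\min}^2$ is exactly what forces $r^\star<\beta_{\min}\le\norm{\delta}_2$ (via $\norm{s_{J\cup J_0}(\beta_0)}_2\lesssim\sqrt{nq\log p}$) and makes the linear term of $\phi$ at $r=\beta_{\min}$ at most half its quadratic term there; hence $\ln(\wh\beta_J)-\ln(\beta_0)\le\phi(\beta_{\min})\le-\tfrac{\underline c}{4}\,n\,\beta_{\min}^2$. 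Since $\beta_0\in\R^{J_0}$ we have $\ln(\wh\beta_{J_0})\ge\ln(\beta_0)$, and subtracting gives $\ln(\wh\beta_{J_0})-\ln(\wh\beta_J)\ge\tfrac{\underline c}{4}\,n\,\beta_{\min}^2$, i.e.\ the claim with $C_{\mathrm{false}}=\underline c/4$.

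The main obstacle is part~\ref{thm2:a}, specifically obtaining the \emph{sharp} leading constant $1+\eps$ rather than a condition number: this is possible only because of the exact identity $\cov(\widetilde{W}_J)=nS_J$ together with the precise quadratic expansion---the naive strong-concavity bound $\ln(\wh\beta_J)-\ln(\wh\beta_{J_0})\le\norm{\nabla_J\ln(\wh\beta_{J_0})}_2^2/(2n\underline c)$ is off by the factor $\overline c/\underline c$---and because the union bound must be organized by $|J\setminus J_0|$ rather than by $|J|$, since otherwise $|J\setminus J_0|$ would be replaced by $q$ in the conclusion and the result would fail. The remaining pieces---the Bernstein/net estimates, the Taylor-remainder bounds via \eqref{Hess2}, the covariance identity, and the technical lemma on existence and boundedness of the maximizers---are routine though somewhat lengthy, and they are where the sample-size rate $q^3\log p$ and the threshold $C_{\mathrm{dim}}$ enter.
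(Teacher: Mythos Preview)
Your approach is close to the paper's, and the key insight for part~\ref{thm2:a}---that the leading quadratic term is exactly $\tfrac12\widetilde W_J^\top (nS_J)^{-1}\widetilde W_J$ with $\cov(\widetilde W_J)=nS_J$, so no condition number appears---is precisely right.  The paper expresses the same identity in projection language, writing the leading term as $\tfrac12\bigl\|\mathrm{Proj}_{\mathcal S_J^\perp}\bigl(H_J(\beta_0)^{-1/2}s_J(\beta_0)\bigr)\bigr\|_2^2$ with $\mathcal S_J=H_J(\beta_0)^{1/2}\R^{J_0}$, which is algebraically identical to your Schur-complement expression.  The paper obtains the concentration via the exact exponential-family moment generating function rather than Bernstein, which is a little cleaner for the sharp $(1+\eps)$ constant but not essential.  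One small slip: you Taylor-expand at $\wh\beta_{J_0}$ but then define $\widetilde W_J$ and $S_J$ at $\beta_0$; the paper expands everything at $\beta_0$, comparing $\ln(\beta_0+\gamma)$ with $\ln(\beta_0+\tilde\gamma)$ for a suitably projected $\tilde\gamma\in\R^{J_0}$, which keeps the covariance identity exact.

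There is, however, a genuine gap in your treatment of part~\ref{thm2:b}.  Your bound $\ln(\wh\beta_J)-\ln(\beta_0)\le\phi(\norm{\delta}_2)$ uses the Hessian lower bound $H_{J\cup J_0}(\beta_0+t\delta)\succeq n\underline c\,\mathbf I$ along the \emph{entire} segment from $\beta_0$ to $\wh\beta_J$, and for this you need $\norm{\beta_0+t\delta}_2\le\bar a$, hence $\norm{\wh\beta_J}_2\le\bar a$.  You defer this to a ``technical lemma of the type collected in part~\ref{sec:technical-lemmas},'' but no such lemma is there (those lemmas are empirical-process bounds on moments of $X_i^\top u$, not bounds on MLEs), and for a \emph{false} model $J$ there is no obvious a~priori bound on $\norm{\wh\beta_J}_2$ depending only on $(c_{\mathrm{change}},c_{\mathrm{lower}}(a_0),c_{\mathrm{upper}}(a_0),\eps,\nu)$.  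The paper sidesteps this entirely by a concavity trick: it first proves the quadratic upper bound only for $\norm{\gamma}_2\le r_0:=c_{\mathrm{lower}}(\norm{\beta_0}_2)/(2c_{\mathrm{change}})$, where Proposition~\ref{prop:hess:in:luo:chen:form} applies, and then uses concavity of $t\mapsto\ln(\beta_0+t\gamma/\norm{\gamma}_2)$ to extend to all $\gamma$, obtaining
\[
\ln(\beta_0+\gamma)-\ln(\beta_0)\le\norm{\gamma}_2\Bigl(\sqrt{n\,c_{\mathrm{upper}}(\norm{\beta_0}_2)}\,\tau_{|J\setminus J_0|}-\tfrac{n\,c_{\mathrm{lower}}(\norm{\beta_0}_2)}{4}\min\{\norm{\gamma}_2,r_0\}\Bigr),
\]
which requires no control of $\norm{\wh\beta_J}_2$ and keeps all constants expressed through $c_{\mathrm{lower}}(\norm{\beta_0}_2)\ge c_{\mathrm{lower}}(a_0)$.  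Plugging in $\gamma=\wh\beta_J-\beta_0$ and $\norm{\gamma}_2\ge\beta_{\min}$ then gives part~\ref{thm2:b}.  Your argument is easily repaired this way.
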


The proof of Theorem~\ref{thm:HessianToLikelihood} is deferred to
Appendix~\ref{sec:LikelihoodAndScore}.  We remark that the proof of
claim~\ref{thm2:a} invokes the Hessian conditions only for $J\supseteq
J_0$ with $|J|\le q$.  The conditions for cardinality up to $2q$ are
used for claim~\ref{thm2:b}, which is proved by considering the union
$J_0\cup J$ for the given false model $J\not\supset J_0$.

\subsection{Consistency of extended BIC  in logistic regression}
\label{sec:cons-ebic-logist}

Having established bounds on Hessian and likelihood ratios via
Theorem~\ref{thm:RandomToHessian} and
Theorem~\ref{thm:HessianToLikelihood}, respectively, we are able to
give conditions that entail that $\mathrm{BIC}_\gamma$ selects the
most parsimonious true model with high probability.

\begin{theorem}
  \label{thm:logistic:ebic}
  Let $\beta_0$ be the true parameter with $J_0=\supp(\beta_0)$ and
  $\norm{\beta_0}_2\leq a_0$ for a constant $a_0>0$.  Fix $\gamma\ge
  0$ and $\epsilon,\nu>0$.  Then there exist constants
  $C_0,C_1,C_2,C_3>0$, depending only on $(a_0,a_1,a_2,a_3)$ and
  $(\epsilon,\nu)$, such that if the covariates satisfy
  \ref{ass:A1}-\ref{ass:A3} with respect to $2 q$ for $q \ge |J_0|$,
  if
\[
    p\ge C_0, \quad n \ge \max\left\{ 
    C_1\cdot q^3\log(p), \,C_2\cdot \frac{q\log(n p^{2\gamma})}{\min_{j\in
        J_0}|(\beta_0)_j|^2}\right\},
\]
  and if 
  \begin{equation}
    \label{eq:logistic:choice:gamma}
  \sqrt{n}>p^{(1+\epsilon)(1+\nu)-\gamma},
  \end{equation}
  then the event that
  \[
  J_0 = \arg\min \{ \mathrm{BIC}_\gamma(J) \::\: J\subset[p],\, |J|\le q\}
  \]
  has probability at least
  \[
  \left(1-\exp\left\{-\,C_3\cdot
      \frac{n}{q^3}\right\} \right)\left(
    1-\frac{1}{p^{\nu}}\right).
  \]
\end{theorem}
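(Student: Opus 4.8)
The plan is to combine Theorems~\ref{thm:RandomToHessian} and~\ref{thm:HessianToLikelihood} and then reduce the assertion to an elementary comparison of the three additive contributions to $\mathrm{BIC}_\gamma(J)-\mathrm{BIC}_\gamma(J_0)$. First I would invoke Theorem~\ref{thm:RandomToHessian} \emph{at sparsity level $2q$}: since the covariates satisfy~\ref{ass:A1}--\ref{ass:A3} with respect to $2q$ and, after choosing $C_1$ large enough, $n\ge C_1 q^3\log(p)\ge c_{\mathrm{sample}}\,(2q)^3\log(p)$, the Hessian inequalities~\eqref{Hess1} and~\eqref{Hess2} hold simultaneously for every $|J|\le 2q$ on an event $\mathcal{E}$ with $\PP{\mathcal{E}}\ge 1-\exp\{-c_{\mathrm{prob}}\,n/(2q)^3\}\ge 1-\exp\{-C_3\,n/q^3\}$. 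On $\mathcal{E}$ all hypotheses of Theorem~\ref{thm:HessianToLikelihood} are in force (using $\norm{\beta_0}_2\le a_0$, $q\ge|J_0|$, $p\ge C_0\ge C_{\mathrm{dim}}$, and a sample size dominating both lower bounds of that theorem; the Hessian control up to $2q$ is what makes claim~\ref{thm2:b} available, via passing to $J_0\cup J$). Hence, conditionally on any covariate realization in $\mathcal{E}$, the likelihood-ratio bounds~\ref{thm2:a} and~\ref{thm2:b} hold on a conditional event $\mathcal{F}$ of probability at least $1-p^{-\nu}$, \emph{with the same $\eps$ and $\nu$} as in the statement. It then remains to verify that on $\mathcal{E}\cap\mathcal{F}$ we have $\mathrm{BIC}_\gamma(J)>\mathrm{BIC}_\gamma(J_0)$ for every $J\subset[p]$, $J\ne J_0$, with $|J|\le q$.

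Write $\Delta(J):=\mathrm{BIC}_\gamma(J)-\mathrm{BIC}_\gamma(J_0)=2\big(\ln(\wh\beta_{J_0})-\ln(\wh\beta_J)\big)+(|J|-|J_0|)\big(\log(n)+2\gamma\log(p)\big)$, and split into two cases. If $J\supsetneq J_0$, put $k:=|J\setminus J_0|\ge 1$; claim~\ref{thm2:a} gives $\ln(\wh\beta_{J_0})-\ln(\wh\beta_J)\ge-(1+\eps)(k+\nu)\log(p)$, so $\Delta(J)\ge k\big(\log(n)+2\gamma\log(p)\big)-2(1+\eps)(k+\nu)\log(p)$. Condition~\eqref{eq:logistic:choice:gamma} rearranges, upon taking logarithms, to $\log(n)+2\gamma\log(p)>2(1+\eps)(1+\nu)\log(p)$; since $k\ge1$ and $(1+\nu)k\ge k+\nu$ this yields $k\big(\log(n)+2\gamma\log(p)\big)>2(1+\eps)(1+\nu)k\log(p)\ge 2(1+\eps)(k+\nu)\log(p)$, whence $\Delta(J)>0$. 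If instead $J\not\supset J_0$ with $|J|\le q$, then $|J|-|J_0|\ge-|J_0|\ge-q$ and, since $\log(n)+2\gamma\log(p)\ge 0$, claim~\ref{thm2:b} gives
\[
\Delta(J)\;\ge\; 2C_{\mathrm{false}}\,n\min_{j\in J_0}|(\beta_0)_j|^2-q\,\log(np^{2\gamma}),
\]
which is strictly positive provided $n>\dfrac{q\,\log(np^{2\gamma})}{2C_{\mathrm{false}}\min_{j\in J_0}|(\beta_0)_j|^2}$; this is guaranteed by the second sample-size lower bound of the theorem once $C_2>1/(2C_{\mathrm{false}})$. Thus $J_0$ is the unique minimizer on $\mathcal{E}\cap\mathcal{F}$, and conditioning on the covariates gives $\PP{\mathcal{E}\cap\mathcal{F}}\ge\big(1-\exp\{-C_3 n/q^3\}\big)\big(1-p^{-\nu}\big)$.

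I do not expect a genuine obstacle beyond constant bookkeeping. One must check that the single requirement $n\ge\max\{C_1 q^3\log(p),\ C_2\,q\log(np^{2\gamma})/\min_j|(\beta_0)_j|^2\}$ simultaneously clears (i) the $c_{\mathrm{sample}}(2q)^3\log(p)$ threshold of Theorem~\ref{thm:RandomToHessian} at sparsity $2q$, (ii) both thresholds of Theorem~\ref{thm:HessianToLikelihood} (note $\log(np^{2\gamma})\ge\log(p)$ because $n,p\ge2$ and $\gamma\ge0$), and (iii) the bound $\tfrac{q\log(np^{2\gamma})}{2C_{\mathrm{false}}\min_j|(\beta_0)_j|^2}$ from the false-model case; and that every constant produced by applying Theorems~\ref{thm:RandomToHessian} and~\ref{thm:HessianToLikelihood} at sparsity $2q$ still depends only on $(a_0,a_1,a_2,a_3)$ and $(\eps,\nu)$, which holds because the $a_i$-dependence there is uniform in the sparsity index while $\gamma$ enters only through the sample-size condition and~\eqref{eq:logistic:choice:gamma}, not through any constant. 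The one point that genuinely demands care is threading the \emph{same} pair $(\eps,\nu)$ through Theorem~\ref{thm:HessianToLikelihood} and condition~\eqref{eq:logistic:choice:gamma}, so that the overfit case~\ref{thm2:a} closes exactly rather than by a hair.
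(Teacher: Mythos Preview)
Your proposal is correct and follows essentially the same route as the paper: invoke Theorem~\ref{thm:RandomToHessian} (at sparsity $2q$) to get the Hessian event, then Theorem~\ref{thm:HessianToLikelihood} for the likelihood-ratio bounds, and finally verify $\mathrm{BIC}_\gamma(J)>\mathrm{BIC}_\gamma(J_0)$ by the same case split into $J\supsetneq J_0$ (using $(1+\nu)k\ge k+\nu$ and condition~\eqref{eq:logistic:choice:gamma}) and $J\not\supset J_0$ (using the $C_2$ lower bound). One small caution in your bookkeeping: the inequality $\log(np^{2\gamma})\ge\log(p)$ does not follow from $n,p\ge 2$ and $\gamma\ge 0$ alone when $\gamma<1/2$, but it does follow from condition~\eqref{eq:logistic:choice:gamma}, which gives $np^{2\gamma}>p^{2(1+\eps)(1+\nu)}\ge p^2$.
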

\begin{proof}
 First, examining the statements of  Theorem~\ref{thm:RandomToHessian} and
  Theorem~\ref{thm:HessianToLikelihood}, we see that we
  can choose the constants $C_0,C_1,C_2,C_3$ large enough that the 
  conditions in 
  Theorems~\ref{thm:RandomToHessian} and~\ref{thm:HessianToLikelihood} 
  are satisfied. These theorems then imply that, with the claimed probability,
  the following statement is true simultaneously for all $|J|\le q$:
  \begin{equation}
    \label{eq:lr-bounds:ebic}
    \ln(\wh{\beta}_J)-\ln(\wh{\beta}_{J_0})\leq
    \begin{cases}
      (1+\eps)(|J\backslash J_0|+\nu)\log(p) &\text{if }\ J\supseteq
      J_0,\\
      -C_\mathrm{false} n\min_{j\in J_0}|(\beta_0)_j|^2 &\text{if }\ J\not\supseteq J_0,
    \end{cases}
  \end{equation}
  where $C_\mathrm{false}>0$ is a constant from
  Theorem~\ref{thm:HessianToLikelihood}.  Condition
  on~(\ref{eq:lr-bounds:ebic}) being true for all $|J|\le q$.  We
  claim that under our assumptions
  \begin{multline*}
    \mathrm{BIC}_{\gamma}(J)-\mathrm{BIC}_{\gamma}(J_0)
    =-2\left(\ln(\wh{\beta}_J)-\ln(\wh{\beta}_{J_0})\right)\\
    + (
    |J|-|J_0|)\big(\log(n)+2\gamma\log(p)\big) 
  \end{multline*}
  is positive for any model given by a set $J\neq J_0$ of cardinality $|J|\le
  q$.

  If $J\not\supseteq J_0$, that is, if the model is false,
  then~(\ref{eq:lr-bounds:ebic}) yields the bound
  \begin{equation*}
    \mathrm{BIC}_{\gamma}(J)-\mathrm{BIC}_{\gamma}(J_0)
    \;\geq\; 2C_\mathrm{false} n\min_{j\in J_0}|(\beta_0)_j|^2-q\log(n p^{2\gamma})\;.
  \end{equation*}
 Since we require that  $n \ge C_2\cdot \frac{q\log(n p^{2\gamma})}{\min_{j\in
        J_0}|(\beta_0)_j|^2}$,  this lower bound
   on $ \mathrm{BIC}_{\gamma}(J)-\mathrm{BIC}_{\gamma}(J_0)$
 is positive for a sufficiently large choice of the constant $C_2$.

  For $J\supsetneq J_0$ with $|J|\leq q$, we have
  \begin{multline*}
    \mathrm{BIC}_{\gamma}(J)-\mathrm{BIC}_{\gamma}(J_0)
    \;\geq\; -2(1+\eps)(|J\backslash J_0|+\nu)\log(p)\\
    + |J\backslash J_0|\left(\log(n)+2\gamma\log(p)\right)\;,
  \end{multline*}
  which can be lower-bounded further as
  \begin{align*}
    \mathrm{BIC}_{\gamma}(J)-\mathrm{BIC}_{\gamma}(J_0) \;\geq\;
    |J\backslash J_0|\cdot \left( \log(n) + 2\big[\gamma
      -(1+\eps)(1+\nu)\big]\log(p) \right)\;.
  \end{align*}
  This is positive by the assumed inequality
  from~(\ref{eq:logistic:choice:gamma}). 
\end{proof}

Based on Theorem~\ref{thm:logistic:ebic}, we can identify asymptotic
scenarios under which $\mathrm{BIC}_\gamma$ yields consistent variable
selection.  To this end, consider a sequence of variable selection
problems indexed by the sample size $n$, where the $n$-th problem has
$p_n$ covariates and true parameter $\beta_0(n)$ with support
$J_0(n)$.  Let $q_n$ be the bound on the size of the considered
models, and let
\[
\beta_{\min}(n) = \min_{j\in
        J_0(n)}|\beta_0(n)_j|
\]
be the smallest absolute value of any non-zero coefficient in
$\beta_0(n)$.

\begin{corollary}
  \label{cor:ebic-consist-poly}
  Suppose that $p_n\rightarrow\infty$ as $n\rightarrow\infty$ with $p_n\leq n^\kappa$ for some $\kappa\in(0,\infty]$ and
 $\log(p_n)\leq n^\tau$ for some $0< \tau<1$.  Suppose further that $q_n\leq n^\psi$ for
  some $0\le \psi<\frac{1}{3}(1-\tau)$, and that $\beta_{\min}(n)\geq n^{-\phi/2}$ for
  some $0\le\phi<1-\psi-\tau$.
  Assume that the covariates satisfy \ref{ass:A1}-\ref{ass:A3} with
  respect to $2 q_n$ for some constants $a_1,a_2,a_3>0$,
  and that $|J_0(n)|\le q_n$, and
 $\norm{\beta_0(n)}_2\le a_0$ for a constant $a_0>0$.  Then for any
  $\gamma>1-\frac{1}{2\kappa}$, variable selection with
  $\mathrm{BIC}_\gamma$ is consistent in the sense that the event
  \[
  J_0(n) = \arg\min \{ \mathrm{BIC}_\gamma(J) \::\: J\subset[p_n],\,
  |J|\le q_n\}
  \]
  has probability tending to one as $n\to\infty$. 
\end{corollary}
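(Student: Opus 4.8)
The plan is to obtain the corollary as a direct consequence of Theorem~\ref{thm:logistic:ebic}: fix a single pair $(\epsilon,\nu)$ and check that all hypotheses of that theorem hold for every sufficiently large $n$, then note that the probability bound it delivers converges to one. To begin, since $(1+\epsilon)(1+\nu)\to 1$ as $\epsilon,\nu\downarrow 0$ and $\gamma>1-\frac{1}{2\kappa}$ (reading $\frac{1}{2\kappa}=0$ when $\kappa=\infty$), I can choose and fix $\epsilon,\nu>0$ small enough that $(1+\epsilon)(1+\nu)-\gamma<\frac{1}{2\kappa}$. Let $C_0,C_1,C_2,C_3$ be the constants produced by Theorem~\ref{thm:logistic:ebic} for these $(\epsilon,\nu)$, this $\gamma$, and the given $(a_0,a_1,a_2,a_3)$; the standing assumptions \ref{ass:A1}--\ref{ass:A3} with respect to $2q_n$, $|J_0(n)|\le q_n$, and $\norm{\beta_0(n)}_2\le a_0$ are exactly what that theorem requires.

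Next I would verify the three remaining hypotheses of Theorem~\ref{thm:logistic:ebic} hold for all large $n$. The condition $p_n\ge C_0$ is immediate from $p_n\to\infty$. For the first sample-size bound, $q_n^3\log(p_n)\le n^{3\psi}\cdot n^\tau=n^{3\psi+\tau}$ and $3\psi+\tau<1$ by hypothesis, so $q_n^3\log(p_n)=o(n)$ and $n\ge C_1 q_n^3\log(p_n)$ eventually. For the second bound, $\log n=o(n^\tau)$ together with $\log p_n\le n^\tau$ gives $\log(np_n^{2\gamma})=\log n+2\gamma\log p_n\le(1+2\gamma)n^\tau$ for large $n$, while $q_n\le n^\psi$ and $\beta_{\min}(n)^{-2}\le n^\phi$, so
\[
\frac{q_n\log(np_n^{2\gamma})}{\beta_{\min}(n)^2}\;\le\;(1+2\gamma)\,n^{\psi+\phi+\tau},
\]
and $\psi+\phi+\tau<1$ by the assumption $\phi<1-\psi-\tau$, so this is $o(n)$ as well. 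Finally, for~(\ref{eq:logistic:choice:gamma}): if $(1+\epsilon)(1+\nu)-\gamma\le 0$ then $p_n^{(1+\epsilon)(1+\nu)-\gamma}\le 1<\sqrt n$; otherwise $p_n\le n^\kappa$ forces $\kappa<\infty$ and $p_n^{(1+\epsilon)(1+\nu)-\gamma}\le n^{\kappa((1+\epsilon)(1+\nu)-\gamma)}$ with exponent strictly below $\frac12$ by the choice of $\epsilon,\nu$, hence $\sqrt n>p_n^{(1+\epsilon)(1+\nu)-\gamma}$.

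With all hypotheses in force, Theorem~\ref{thm:logistic:ebic} gives, for all large $n$, that $J_0(n)=\arg\min\{\mathrm{BIC}_\gamma(J):J\subset[p_n],\,|J|\le q_n\}$ with probability at least $(1-\exp\{-C_3 n/q_n^3\})(1-p_n^{-\nu})$; since $q_n^3\le n^{3\psi}$ with $3\psi<1$ we have $n/q_n^3\ge n^{1-3\psi}\to\infty$, and $p_n^{-\nu}\to 0$ because $p_n\to\infty$ and $\nu>0$, so this probability tends to one, which is the claim. The only genuinely delicate point is the opening step---calibrating the free parameters $(\epsilon,\nu)$ of Theorem~\ref{thm:logistic:ebic} against the gap $\gamma-(1-\frac{1}{2\kappa})$ and treating $\kappa=\infty$ separately; everything afterward is routine comparison of polynomial exponents, where the strict inequalities $\psi<\frac13(1-\tau)$ and $\phi<1-\psi-\tau$ are precisely what render the two sample-size requirements $o(n)$.
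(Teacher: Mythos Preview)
Your proof is correct and follows essentially the same approach as the paper's: fix $(\epsilon,\nu)$ small enough to exploit the gap $\gamma-(1-\tfrac{1}{2\kappa})$, then verify that the hypotheses of Theorem~\ref{thm:logistic:ebic} hold for all large $n$ and that its success probability tends to one. You simply spell out in detail the exponent comparisons that the paper's proof leaves implicit in the phrase ``scaling assumptions are such that the conditions\dots are met.'' One minor wording issue: the clause ``$p_n\le n^\kappa$ forces $\kappa<\infty$'' is misleading---what actually forces $\kappa<\infty$ in that branch is that the exponent $(1+\epsilon)(1+\nu)-\gamma$ is positive and you have chosen it strictly below $\tfrac{1}{2\kappa}$; the bound $p_n\le n^\kappa$ is then \emph{used} (not \emph{forcing} anything) to compare $p_n^{\text{exponent}}$ with a power of $n$.
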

\begin{proof}
  Since $p_n\leq n^\kappa$, condition~(\ref{eq:logistic:choice:gamma}) in
  Theorem~\ref{thm:logistic:ebic} holds for all $n$ if
  \[
  \frac{1}{2\kappa} \;>\; (1+\epsilon)(1+\nu)-\gamma.
  \]
  Having assumed $\gamma>1-\frac{1}{2\kappa}$ here, the condition is
  satisfied for $\epsilon$ and $\nu$ sufficiently small.  Fix a
  suitable choice of $(\epsilon,\nu)$ for the rest of the argument.

  Our scaling assumptions for $p_n$, $q_n$ and $\beta_{\min}(n)$ are such that
  the conditions involving the constants $C_0$, $C_1$ and $C_2$ in
  Theorem~\ref{thm:logistic:ebic} are met for $n$ large enough.
  Hence, Theorem~\ref{thm:logistic:ebic} applies for all large $n$.
  And, as $n\to\infty$, the probability in
  Theorem~\ref{thm:logistic:ebic} tends to one.
\end{proof}

\begin{remark}
  Corollary~\ref{cor:ebic-consist-poly} requires $p_n\leq n^{\kappa}$ 
  and $\log(p_n)\leq n^\tau$, for $\kappa\in(0,\infty]$ and $\tau\in(0,1)$.
  For $\kappa<\infty$, this means that $p_n$ grows polynomially with
  $n$.  In this case, $\tau$ can be chosen arbitrarily close to 0, and the conditions on $\psi$ and $\phi$ become $0\le \psi<1/3$ and $0\le \phi<1-\psi$.  For $\kappa=\infty$, the growth of $p_n$ can be faster
  than polynomial; the remaining condition $\log(p_n)\leq n^\tau$ allows
  for subexponential growth. In this latter case, since $\kappa=\infty$, we
  require $\gamma>1$ in order to ensure consistency of $\mathrm{BIC}_\gamma$.
\end{remark}

\section{Consistency of extended BIC for Ising models}
\label{sec:cons-ebic-ising}

Turning to neighborhood selection for Ising models, let
$Z_1,\dots,Z_n$ be an i.i.d.~sample, where each
$Z_i=(Z_{i1},\dots,Z_{ip})$ is a vector of binary random variables
with values in $\{-1,1\}$.  Suppose the $Z_i$ follow an Ising model as
in~(\ref{eq:ising}), with graph $G=(V,E)$ on the vertex set $V=[p]$,
and interaction parameters $\theta_{vw}\in\mathbb{R}$ for $\{v,w\}\in
E$.  Assume that $G$ is minimal in that $\{v,w\}\in E$ if
and only $\theta_{vw}\not=0$.

We will consider selection of $G$ by means of variable selection in
the $p$ logistic regression models, where the $v$-th regression
problem has response variable $Z_v$ and the $p-1$ covariates $Z_w$, 
$w\in [p]\setminus\{v\}$.  We write $\mathrm{BIC}_\gamma(J,v)$ for the
BIC score from~(\ref{eq:ebic}) evaluated for the logistic regression
model with response $Z_v$ and covariates $Z_w$, $w\in J$, with
$J\subseteq [p]\setminus\{v\}$.  Correct inference of $G$ is achieved
if, for each $v\in[p]$, the neighborhood
\[
\nei(v)=\left\{ w\in [p]\setminus\{v\} \::\:
  \theta_{vw}\not=0\right\} =\left\{ w\in [p]\setminus\{v\} \::\:
  \{v,w\}\in E\right\}
\]
(uniquely) minimizes $\mathrm{BIC}_\gamma(\cdot,v)$.

Using $Z_1=(Z_{11},\dots,Z_{1p})^\top$ as a representative, we will
say that $Z_1,\dots,Z_n$ satisfy
assumptions~\ref{ass:ising:2}-\ref{ass:ising:4} with respect to an
integer $q\ge 1$ if the following holds for fixed constants
$b_0,b_1,b_2>0$:

\begin{enumerate}[label=(B\arabic*)]
\item \label{ass:ising:2} The interaction between a variable and its
  neighborhood is bounded as
  \[
  \sqrt{\sum_{w\in\nei(v)}\theta_{vw}^2}\le b_0 \quad\text{for all}\quad v\in
  [p].
  \]
\item \label{ass:ising:3} For any $q$-sparse unit vector $u$,
  $\EE{(Z_1^{\top}u)^2}\geq b_1$.
\item \label{ass:ising:4} For any $q$-sparse unit vector $u$,
  $\EE{|Z_1^{\top}u|^3}\leq b_2$.
\end{enumerate}

As explained in \cite{santhanam:wainwright:2012}, the graph selection
problem is ill-posed without some upper bound on the interaction
between a variable and its neighborhood, as we impose
in~\ref{ass:ising:2}.  Assumption~\ref{ass:ising:3} constitutes a
lower bound on the eigenvalues of the $q\times q$ principal
submatrices of the covariance matrix $\EE{Z_1Z_1^\top}$ and is akin to
requirements in \cite{ravikumar:2010} and \cite{loh:wainwright:2014}.  As we
clarify at the end of this section, condition~\ref{ass:ising:3} is
implied by~\ref{ass:ising:2} for asymptotic scenarios in which all
neighborhoods $\nei(v)$ have cardinality bounded by a constant, that
is, the graph $G$ has bounded degree.  Assumption~\ref{ass:ising:4} is
the final piece needed to invoke our result on general logistic
regression.

To formulate a consistency result for neighbor selection in Ising
models, we consider a sequence of neighborhood selection problems
indexed by the sample size $n$.  The $n$-th problem has $p_n$
variables and interaction parameters $\theta_{vw}(n)$, with associated
neighorhoods $\nei_n(v)$ and edge set $E(n)$.  Let $d_n$ be the
maximum cardinality of any neighborhood $\nei_n(v)$, $v\in[p_n]$,
and let
\[
\theta_{\min}(n)=\min_{\{v,w\}\in E(n)} |\theta_{vw}(n)|
\]
be the non-zero interaction of smallest magnitude.

\begin{theorem}
  \label{thm:cons-ebic-ising}
   Suppose that $p_n\rightarrow\infty$ as $n\rightarrow\infty$ with $p_n\leq n^\kappa$ for some $\kappa\in(0,\infty]$ and
  $\log(p_n)\leq n^\tau$ for some $0< \tau<1$.  Suppose further that $q_n\leq n^\psi$ for
  some $0\le \psi<\frac{1}{3}(1-\tau)$, and that $\theta_{\min}(n)\geq n^{-\phi/2}$ for
  some $0\le\phi<1-\psi-\tau$.
  Assume that the sample $Z_1,\dots,Z_n$ satisfies
  \ref{ass:ising:2}-\ref{ass:ising:4} with respect to $2 q_n$ and that
  $d_n\le q_n$.  Then for any $\gamma>2-\frac{1}{2\kappa}$, Ising
  neighborhood selection with $\mathrm{BIC}_\gamma$ is consistent in
  the sense that the event that, simultaneously for all $v\in[p_n]$,
  \[
  \nei_n(v) = \arg\min \{ \mathrm{BIC}_\gamma(J,v) \::\:
  J\subset[p_n]\setminus\{v\},\, |J|\le q_n\}
  \]
  has probability tending to one as
  $n\to\infty$.
\end{theorem}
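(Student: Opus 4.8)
The plan is to reduce Theorem~\ref{thm:cons-ebic-ising} to the single-regression result, Theorem~\ref{thm:logistic:ebic}, applied to each of the $p_n$ neighborhood regressions, followed by a union bound over the vertices $v\in[p_n]$. One cannot simply invoke Corollary~\ref{cor:ebic-consist-poly}: it only guarantees that a single regression succeeds with probability tending to one, whereas a union of $p_n$ such events need not tend to one. The argument must therefore retain the explicit failure probability in Theorem~\ref{thm:logistic:ebic} and arrange that its sum over the $p_n$ vertices still vanishes.

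First I would translate the Ising assumptions into the logistic-regression assumptions, vertex by vertex. For a fixed $v$, the $v$-th regression has response $Z_v$, covariate vector $(Z_{iw})_{w\neq v}\in\R^{p_n-1}$, and true coefficient vector $\beta_0^{(v)}=(2\theta_{vw}(n))_{w\in\nei_n(v)}$ with support $\nei_n(v)$. Embedding any $2q_n$-sparse unit vector on $[p_n]\setminus\{v\}$ into $\R^{p_n}$ by putting a zero in coordinate $v$ turns~\ref{ass:ising:3} and~\ref{ass:ising:4} (assumed with respect to $2q_n$) into~\ref{ass:A1} and~\ref{ass:A2} with respect to $2q_n$, with $a_1=b_1$ and $a_2=b_2$; since $Z_{iw}\in\{-1,1\}$, assumption~\ref{ass:A3} holds with $a_3=1$; and~\ref{ass:ising:2} gives $\norm{\beta_0^{(v)}}_2=2\sqrt{\sum_{w\in\nei_n(v)}\theta_{vw}(n)^2}\le 2b_0=:a_0$. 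Moreover $|\supp(\beta_0^{(v)})|=|\nei_n(v)|\le d_n\le q_n$, and $\min_{w\in\nei_n(v)}|(\beta_0^{(v)})_w|=2\min_{w\in\nei_n(v)}|\theta_{vw}(n)|\ge 2\theta_{\min}(n)\ge n^{-\phi/2}$. The essential point is that the quadruple $(a_0,a_1,a_2,a_3)$ obtained this way does \emph{not} depend on $v$, so the constants furnished below by Theorem~\ref{thm:logistic:ebic} will be uniform in $v$.

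Next I would fix the pair $(\epsilon,\nu)$. Since the union bound multiplies the per-vertex failure probability by $p_n$, I need a per-vertex bound that beats $p_n^{-1}$, so I take $\nu>1$. Because $\gamma>2-\frac{1}{2\kappa}$, I can moreover choose $\epsilon>0$ small enough that $(1+\epsilon)(1+\nu)-\gamma<\frac{1}{2\kappa}$ (when $\kappa=\infty$ this condition reads $(1+\epsilon)(1+\nu)<\gamma$, which is possible because then $\gamma>2$). With $(\epsilon,\nu)$ now fixed, Theorem~\ref{thm:logistic:ebic} supplies constants $C_0,C_1,C_2,C_3>0$ depending only on $(a_0,a_1,a_2,a_3)$ and $(\epsilon,\nu)$, hence the same for every $v$. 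Using $p_n-1\le p_n\le n^\kappa$, $\log p_n\le n^\tau$, $q_n\le n^\psi$ with $3\psi+\tau<1$, and $\theta_{\min}(n)\ge n^{-\phi/2}$ with $\psi+\tau+\phi<1$, I would verify that for all sufficiently large $n$ every hypothesis of Theorem~\ref{thm:logistic:ebic} holds for each $v$: $p_n-1\ge C_0$ because $p_n\to\infty$; $n\ge C_1 q_n^3\log(p_n-1)$ because $q_n^3\log p_n\le n^{3\psi+\tau}$ with $3\psi+\tau<1$; $n\ge C_2\,q_n\log(n(p_n-1)^{2\gamma})/\min_{w\in\nei_n(v)}|(\beta_0^{(v)})_w|^2$ because, using $\log n=\o{n^\tau}$ and $\theta_{\min}(n)^{-2}\le n^{\phi}$, the right-hand side is at most a constant multiple of $n^{\psi+\tau+\phi}$ with $\psi+\tau+\phi<1$; and condition~\eqref{eq:logistic:choice:gamma}, namely $\sqrt n>(p_n-1)^{(1+\epsilon)(1+\nu)-\gamma}$, which is immediate when the exponent is nonpositive and otherwise follows from $\log(p_n-1)\le\kappa\log n$ together with $(1+\epsilon)(1+\nu)-\gamma<\frac{1}{2\kappa}$.

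Theorem~\ref{thm:logistic:ebic} then gives, for each $v$ and all large $n$, that the event $\nei_n(v)=\arg\min\{\mathrm{BIC}_\gamma(J,v):J\subset[p_n]\setminus\{v\},\,|J|\le q_n\}$ fails with probability at most $\exp\{-C_3 n/q_n^3\}+(p_n-1)^{-\nu}$. A union bound over $v\in[p_n]$ bounds the probability that selection fails for at least one vertex by $p_n\exp\{-C_3 n/q_n^3\}+p_n(p_n-1)^{-\nu}$: the first summand equals $\exp\{\log p_n-C_3 n/q_n^3\}\to 0$ because $\log p_n\le n^\tau$ while $n/q_n^3\ge n^{1-3\psi}$ with $1-3\psi>\tau$, and the second is $\O{p_n^{1-\nu}}\to 0$ because $\nu>1$. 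This proves the theorem. The one point requiring genuine care is the bookkeeping around $\gamma$: paying a factor $p_n$ in the union bound forces $\nu>1$, which through~\eqref{eq:logistic:choice:gamma} raises the admissible range from $\gamma>1-\frac{1}{2\kappa}$ in the single-regression Corollary~\ref{cor:ebic-consist-poly} to $\gamma>2-\frac{1}{2\kappa}$ for the whole graph, as anticipated in the introduction; everything else is routine verification that the assumed polynomial scaling rates dominate the (vertex-independent) constants and propagate through the hypotheses of Theorem~\ref{thm:logistic:ebic}.
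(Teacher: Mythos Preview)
Your proposal is correct and follows essentially the same approach as the paper: verify that assumptions \ref{ass:ising:2}--\ref{ass:ising:4} translate into \ref{ass:A1}--\ref{ass:A3} with vertex-independent constants, choose $(\epsilon,\nu)$ with $\nu>1$ so that condition~\eqref{eq:logistic:choice:gamma} is met under $\gamma>2-\tfrac{1}{2\kappa}$, check the sample-size hypotheses of Theorem~\ref{thm:logistic:ebic} via the scaling assumptions, and finish with a union bound over the $p_n$ nodes. Your write-up is in fact somewhat more careful than the paper's in tracking the distinction between $p_n$ and $p_n-1$ and in bounding the two terms of the failure probability separately.
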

\begin{remark} As in Corollary~\ref{cor:ebic-consist-poly},
this result allows for subexponential rather than polynomial
growth of $p_n$ relative to $n$, by setting $\kappa=\infty$.
\end{remark}
\begin{proof}[Proof of Theorem~\ref{thm:cons-ebic-ising}]
  We will show that the result follows from
  Theorem~\ref{thm:logistic:ebic} together with a union bound over the
  $p_n$ logistic regression problems.

  First, we observe that with $p_n\leq n^\kappa$,
  condition~(\ref{eq:logistic:choice:gamma}) in
  Theorem~\ref{thm:logistic:ebic} holds for all $n$ if
  \[
  \frac{1}{2\kappa} \;>\; (1+\epsilon)(1+\nu)-\gamma.
  \]
  Having assumed $\gamma>2-\frac{1}{2\kappa}$ here, the condition can
  be satisfied with a choice of $\epsilon>0$ and $\nu>1$.  We fix such
  a choice of $(\epsilon,\nu)$ for the rest of the argument.

  Next, note that Theorem~\ref{thm:logistic:ebic} is applicable to
  each one of the $p_n$ logistic regression problems in neighborhood
  selection.  Indeed, since $Z_1,\dots,Z_n$ are bounded
  assumption~\ref{ass:A3} holds.  Conditions~\ref{ass:A1}
  and~\ref{ass:A2} are ensured by~\ref{ass:ising:3}
  and~\ref{ass:ising:4}, respectively, and \ref{ass:ising:2} yields
  the bounded signal assumed in Theorem~\ref{thm:logistic:ebic}.
  Moreover, the scaling assumptions on $p_n$, $q_n$ and
  $\theta_{\min}(n)$ are such that the assumptions on the
  corresponding quantities in Theorem~\ref{thm:logistic:ebic} are met.

  Applying Theorem~\ref{thm:logistic:ebic} a total of $p_n$ times, we
  obtain that, separately for each $v\in[p_n]$, the event that
  \[
  \nei_n(v) = \arg\min \{ \mathrm{BIC}_\gamma(J,v) \::\:
  J\subset[p_n]\setminus\{v\},\, |J|\le q_n\}
  \]
  occurs with at least the probability from
  Theorem~\ref{thm:logistic:ebic}.  Ignoring smaller terms of higher
  order in $1/p_n$, this probability is
  \[
  1-  \frac{1}{np_n} - \frac{1}{p_n^\nu}.
  \]
  Since $\nu>1$, we have that
  \[
  p_n\cdot \left(\frac{1}{np_n} + \frac{1}{p_n^\nu} \right)
  \;\longrightarrow \; 0
  \]
  as $n$, and thus also $p_n$, tends to infinity.  Hence, a union
  bound yields the desired claim that all events hold simultaneously
  with probability tending to one.
\end{proof}

Finally, we observe that conditions~\ref{ass:ising:3} and
~\ref{ass:ising:4} do not present a restriction when considering
problems in which there is a fixed bound on the degree of the graph
underlying the Ising model and a bound on the interaction parameters
as in~\ref{ass:ising:2}.  Indeed, \ref{ass:ising:4} holds trivially in
this case since the coordinate of the random vectors are bounded by
one in absolute value.  The sparse eigenvalue
condition~\ref{ass:ising:3} is addressed in the next lemma.

\begin{lemma}
  \label{lem:eig-values-bounded-degree}
  Suppose the random vector $Z=(Z_1,\dots,Z_p)$ follows an Ising model
  with $|\nei(v)|\le q$ for all $v\in [p]$.  If the interaction
  parameters $\theta_{vw}$ for $Z$ satisfy~\ref{ass:ising:2} then it
  holds for any $q$-sparse unit vector $u$ that
  \[
  \EE{(Z^\top u)^2}\;\ge\; \frac{4}{q}\cdot\frac{e^{ 2
        b_0\sqrt{q}}}{\left(1+e^{ 2b_0\sqrt{q}}\right)^2}.
  \]
\end{lemma}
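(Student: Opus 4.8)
The plan is to bound $\EE{(Z^\top u)^2}$ from below by $\var(Z^\top u)$ and then to control this variance by conditioning on all but one coordinate of $Z$, chosen to carry a large share of the mass of $u$. The point of this reduction is that, although the marginal law of $Z$ is intractable, the Ising model~(\ref{eq:ising}) makes the conditional distribution of a single coordinate given all the others completely explicit through~(\ref{eq:ising-logits}).

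Concretely, I fix a $q$-sparse unit vector $u$, write $J=\supp(u)$ (so $1\le|J|\le q$), and pick $j^\ast\in J$ with $|u_{j^\ast}|=\max_{j\in J}|u_j|$; then $u_{j^\ast}^2\ge 1/|J|\ge 1/q$ because $\norm{u}_2=1$ and $u$ has at most $q$ nonzero entries. Since $\sum_{j\in J\setminus\{j^\ast\}}u_jZ_j$ is measurable with respect to $(Z_w:w\ne j^\ast)$, conditioning on those coordinates gives
\[
\var\big(Z^\top u\,\big|\,Z_w,\,w\ne j^\ast\big)=u_{j^\ast}^2\,\var\big(Z_{j^\ast}\,\big|\,Z_w,\,w\ne j^\ast\big),
\]
and hence, by the law of total variance, $\var(Z^\top u)\ge\EE{u_{j^\ast}^2\,\var(Z_{j^\ast}\mid Z_w,\,w\ne j^\ast)}$.

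It then remains to bound the conditional variance of $Z_{j^\ast}$ pointwise. By~(\ref{eq:ising-logits}), $Z_{j^\ast}$ given the other coordinates is $\{-1,1\}$-valued with log-odds $\eta=2\sum_{w\in\nei(j^\ast)}\theta_{j^\ast w}Z_w$, so its conditional variance equals $4\,\mathrm{b}''(\eta)=4e^{\eta}/(1+e^{\eta})^2$. Using $|Z_w|=1$, the Cauchy--Schwarz inequality, assumption~\ref{ass:ising:2}, and $|\nei(j^\ast)|\le q$, one gets $|\eta|\le 2b_0\sqrt{|\nei(j^\ast)|}\le 2b_0\sqrt q$, deterministically. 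Since $t\mapsto e^{t}/(1+e^{t})^2$ is symmetric about $0$ and decreasing in $|t|$, this forces $\mathrm{b}''(\eta)\ge\mathrm{b}''(2b_0\sqrt q)$ pointwise. Combining this with $u_{j^\ast}^2\ge 1/q$ and the previous display yields
\[
\EE{(Z^\top u)^2}\;\ge\;\var(Z^\top u)\;\ge\;\frac1q\cdot\frac{4e^{2b_0\sqrt q}}{\big(1+e^{2b_0\sqrt q}\big)^2},
\]
which is exactly the asserted bound.

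The only subtle point is the last step: to obtain a bound that uses no information about the (intractable) marginal distribution of $Z$, one must control $|\eta|$ \emph{deterministically}, and this is precisely what Cauchy--Schwarz together with the interaction bound~\ref{ass:ising:2} and the degree bound $|\nei(j^\ast)|\le q$ provide. Everything else --- the reduction to $\var(Z^\top u)$, the conditioning step, and the monotonicity of $\mathrm{b}''$ --- is routine.
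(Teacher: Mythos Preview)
Your proof is correct and follows essentially the same route as the paper: pick the coordinate of $u$ with largest absolute value, condition on the remaining coordinates, express the conditional variance of $Z_{j^\ast}$ via the logistic conditional from~(\ref{eq:ising-logits}), and bound $|\eta|$ deterministically using Cauchy--Schwarz together with~\ref{ass:ising:2} and the degree bound. The only cosmetic difference is that the paper bounds $\EE{(Z^\top u)^2\mid Z_{-j^\ast}}$ directly by the conditional variance (via $\var[X]=\min_a\EE{(X-a)^2}$) and then takes expectations, whereas you first pass to $\var(Z^\top u)$ and invoke the law of total variance; the two are equivalent.
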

\begin{proof}
  Without loss of generality, we consider a $q$-sparse unit vector $u$
  that has $\supp(u)=\{1,\dots,q\}$ and
  \[
  |u_1|\ge |u_2|\ge \dots \ge |u_q|.
  \]
  Then $u_1^2\ge 1/q$.  Let $Z_{-1}=(Z_2,\dots,Z_p)^\top$.  For a
  random variable $X$ with finite variance,
  \[
  \var[X]=\min_{a\in\mathbb{R}}\EE{(X-a)^2}.
  \]
  Therefore,
  \begin{equation*}
    \EE{(Z^\top u)^2 \,|\, Z_{-1}} \;\ge \; \var\left[ Z_1u_1 \,|\,
      Z_{-1}\right]\;\ge \; \frac{1}{q}\var\left[ Z_1 \,|\,
      Z_{-1}\right]. 
  \end{equation*}
  Since $Z_1$ takes values in $\{-1,1\}$, we rescale to $(Z_1+1)/2$
  for values in $\{0,1\}$.  Then the conditional distribution of
  $(Z_1+1)/2$ given $Z_{-1}$ is a Bernoulli distribution with success
  probability
  \[
  \frac{\exp\left\{ 2\sum_{w\in\nei(1)}
      \theta_{1w}Z_w\right\}}{1+\exp\left\{ 2\sum_{w\in\nei(1)}
      \theta_{1w}Z_w\right\}}; 
  \]
  recall~(\ref{eq:ising-logits}).  We obtain that
  \begin{align*}
    \var\left[ Z_1 \,|\,
      Z_{-1}\right]\;=\;
    4\var\left[ (Z_1+1)/2 \,|\,
      Z_{-1}\right]\;=\;\frac{4\exp\left\{ 2\sum_{w\in\nei(1)}
      \theta_{1w}Z_w\right\}}{\left(1+\exp\left\{ 2\sum_{w\in\nei(1)}
      \theta_{1w}Z_w\right\}\right)^2}.
  \end{align*}
  By assumption~\ref{ass:ising:2}, 
  \[
  -b_0\sqrt{q}\;\le\;\sum_{w\in\nei(1)}
  \theta_{1w}Z_w\;\le\;  b_0\sqrt{q}.
  \]
  It follows that
  \[
    \EE{(Z^\top u)^2} \;=\;\EE{\EE{(Z^\top u)^2 \,|\, Z_{-1}} } \;\ge\;
    \frac{4}{q}\cdot\frac{e^{ 2
        b_0\sqrt{q}}}{\left(1+e^{ 2b_0\sqrt{q}}\right)^2}. 
    \qedhere
  \]
\end{proof}

\section{Practical considerations when applying information criteria}
\label{sec:pract-cons}

Theorem~\ref{thm:cons-ebic-ising} shows that, with sufficient data,
application of $\mathrm{BIC}_\gamma$ allows one to identify the
correct set of edges, simultaneously at each node, with high
probability.  Application of the information criterion in practice,
however, faces two issues:
\begin{enumerate}[label=(\roman*)]
\item At an individual node, in order to find the sparse model that
  minimizes $\mathrm{BIC}_\gamma$, we must fit a large number of
  models.  With sparsity bounded by $q$, there are on the order of
  $p^q$ models, preventing an exhaustive search when the number of
  variables $p$ is large.
\item After performing neighborhood selection for each node, our
  results may be asymmetrical, that is, we might find that our estimates
  of the coefficients in~(\ref{eq:ising-logits}) satisfy
  $\hat{\beta}_{vw}\neq 0$ but $\hat{\beta}_{wv}=0$ for some pair of
  nodes $v,w$.
\end{enumerate}
To resolve the issue of the large number of possible models at each
node, it is common to use a computationally efficient procedure to
first produce a short list of candidate models, and then apply
$\mathrm{BIC}_\gamma$ to select from this list.  For each node, we use an
$\ell_1$-penalized logistic likelihood \citep{ravikumar:2010}  with varying
levels of penalization $\rho$ to produce the candidate models:
\begin{equation}\label{eqn:LogisticLasso}
\hat{\beta}^{(\rho)}_{v}=\arg\min_{\beta\in\R^{V\backslash\{v\}}}
\left\{-\sum_{i=1}^n \log\pr{\bigg(Z_{iv}\,\bigg|\, \sum_{w\neq v}Z_{iw}\beta_w\bigg)} +
  \rho\norm{\beta}_1\right\}
\end{equation}
where the probability term is given by the logistic model, i.e.
\[ \log\pr{\bigg(Z_{iv}\,\bigg|\, \sum_{w\neq v}Z_{iw}\beta_w\bigg)} = Z_{iv}\cdot  \sum_{w\neq v}Z_{iw}\beta_w - \mathrm{b}\bigg( \sum_{w\neq v}Z_{iw}\beta_w\bigg)\;.\]
As in Section~\ref{sec:cons-ebic-ising}, $Z_{iv}$ refers to the $v$-th
coordinate of the binary vector $Z_i=(Z_{i1},\dots,Z_{ip})$, which is
the $i$-th vector in a sample $Z_1,\dots,Z_n$.

To account for potential asymmetries when we compile information
across nodes, we follow the work of \citet{meinshausen:2006} and draw
an edge connecting nodes $v$ and $w$ based on either an \textsc{and}
rule (requiring both $\hat{\beta}_{vw}\neq 0$ and
$\hat{\beta}_{wv}\neq 0$) or an \textsc{or} rule (requiring only that either
$\hat{\beta}_{vw}\neq 0$ or $\hat{\beta}_{wv}\neq 0$); recall the
discussion from the introduction and, in particular, the empirical
study of \cite{hofling:2009}.

\section{Experiments}
\label{sec:experiments}

We study the performance of the extended BIC on both simulated and
real data.  The real data consists of precipitation measurements from
weather stations across the midwest, where we aim to recover a graph
that is consistent with the true geographical layout of the weather
stations. For this data set, we compare $\mathrm{BIC}_\gamma$ (with a
range of values for the parameter $\gamma$) with cross-validation as
well as with the stability selection method of
\cite{Meinshausen:2010}.  Our simulations replicate those of
\cite{ravikumar:2010}, including three different sparse graph
structures. For the simulated data, we compare different values of the $\gamma$
parameter for $\mathrm{BIC}_\gamma$.

\subsection{Simulated data}

\subsubsection{Data and methods for model selection}

We generate data from sparse Ising models associated to lattice graphs
and star graphs on $p$ nodes for $p\in\{64,100,225\}$.  For each graph
structure, the sample size $n$ is chosen based on the settings that
produced moderately high success rates in the simulations of
\cite{ravikumar:2010}.  We consider the following three graph types:
\begin{description}
\item[\rm\em $4$-Nearest neighbor lattice:] Arranging the nodes in a
  lattice of size $\sqrt{p}\times \sqrt{p}$, each node is connected to
  the nodes directly above, below, left or right, giving maximal
  degree $d=4$.  For adjacent nodes $v$ and $w$, we either set
  $\theta_{vw}=0.5$ (attractive couplings) or draw $\theta_{vw}$ at
  random from $\{+0.5,-0.5\}$ (random couplings).  The sample size is
  $n=\lceil 15d\log(p)\rceil$.
\item[\rm\em $8$-Nearest neighbor lattice:] Analogous to the above but
  also connecting nodes along diagonals.  The maximal degree is $d=8$.
  For edges $\{v,w\}$, we either set $\theta_{vw}=0.25$
  (attractive couplings), or draw $\theta_{vw}$ at random from
  $\{+0.25,-0.25\}$ (random couplings).  The sample size is $n=\lceil
  25d\log(p)\rceil$.
\item[\rm\em Star graph:] Edges are drawn from a designated ``hub''
  node to $q$ other nodes, where either $q=\lceil \log(p)\rceil$
  (logarithmic sparsity) or $q=\lceil 0.1 p\rceil$ (linear
  sparsity). For edges $\{v,w\}$, we set $\theta_{vw}=+0.25$.  The sample
  size is $n=\lceil 10d\log(p)\rceil$, where $d=q$ is
  the maximal degree of the graph.
\end{description}
These three graph structures are illustrated in Figure 1 of
\cite{ravikumar:2010}.

\begin{figure}[t]
\centering
(a) \includegraphics[width=10cm]{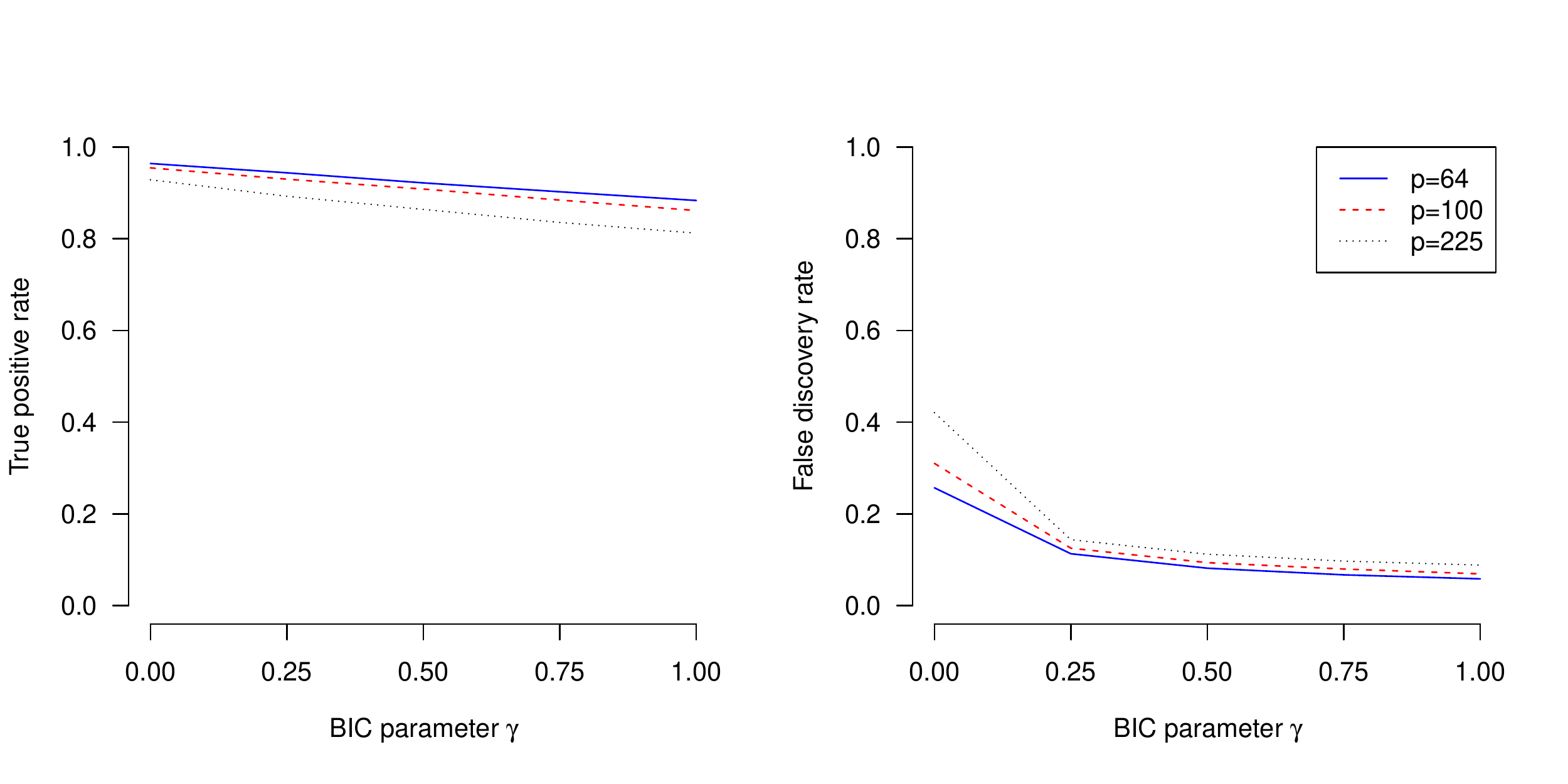}\\
(b) \includegraphics[width=10cm]{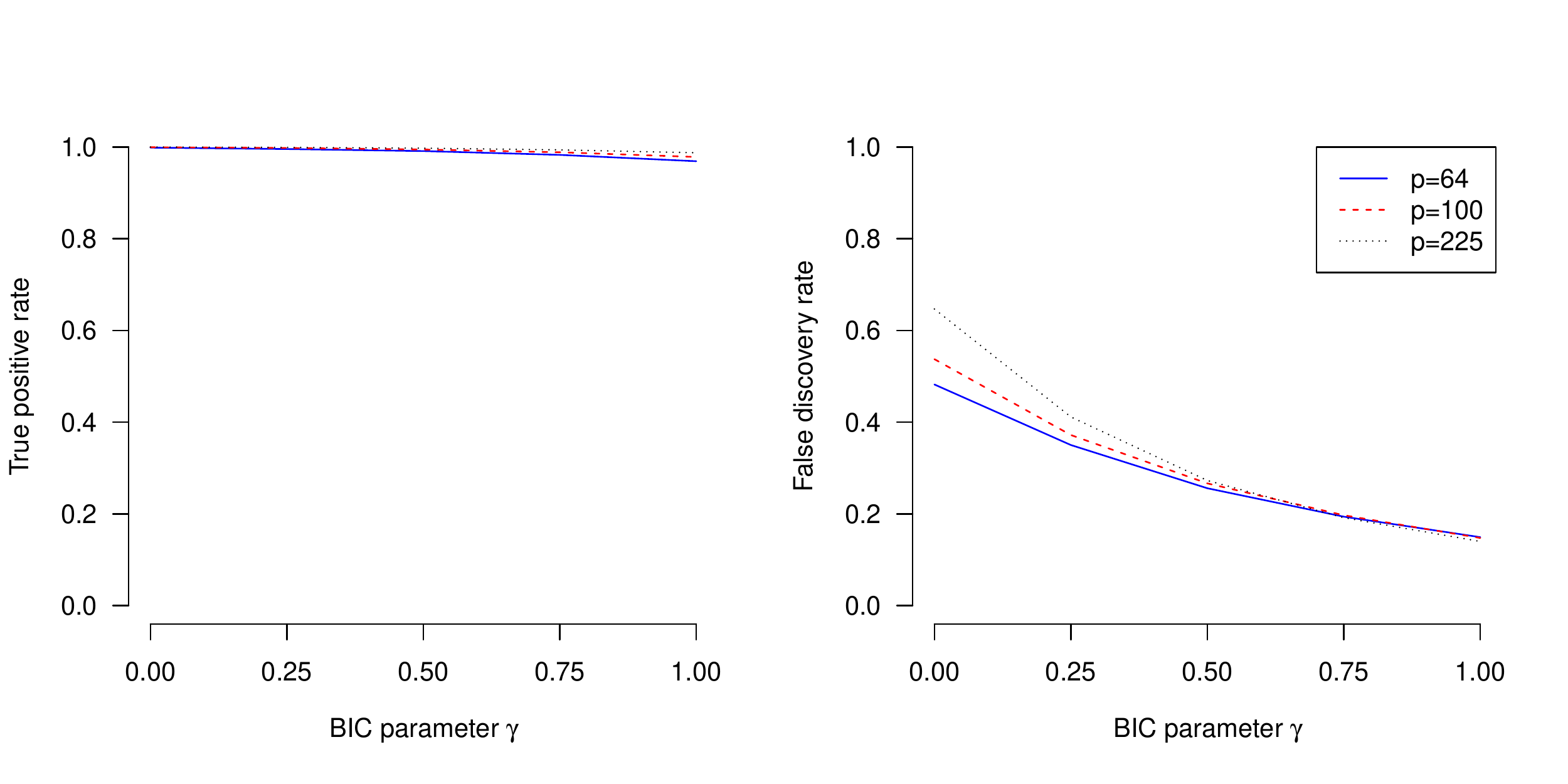}
\caption{Results for the $4$-nearest neighbor graph with (a)
  attractive couplings and (b) random couplings.} 
\label{fig:lattice4}
\end{figure}

\begin{figure}[t]
\begin{center}
(a) \includegraphics[width=10cm]{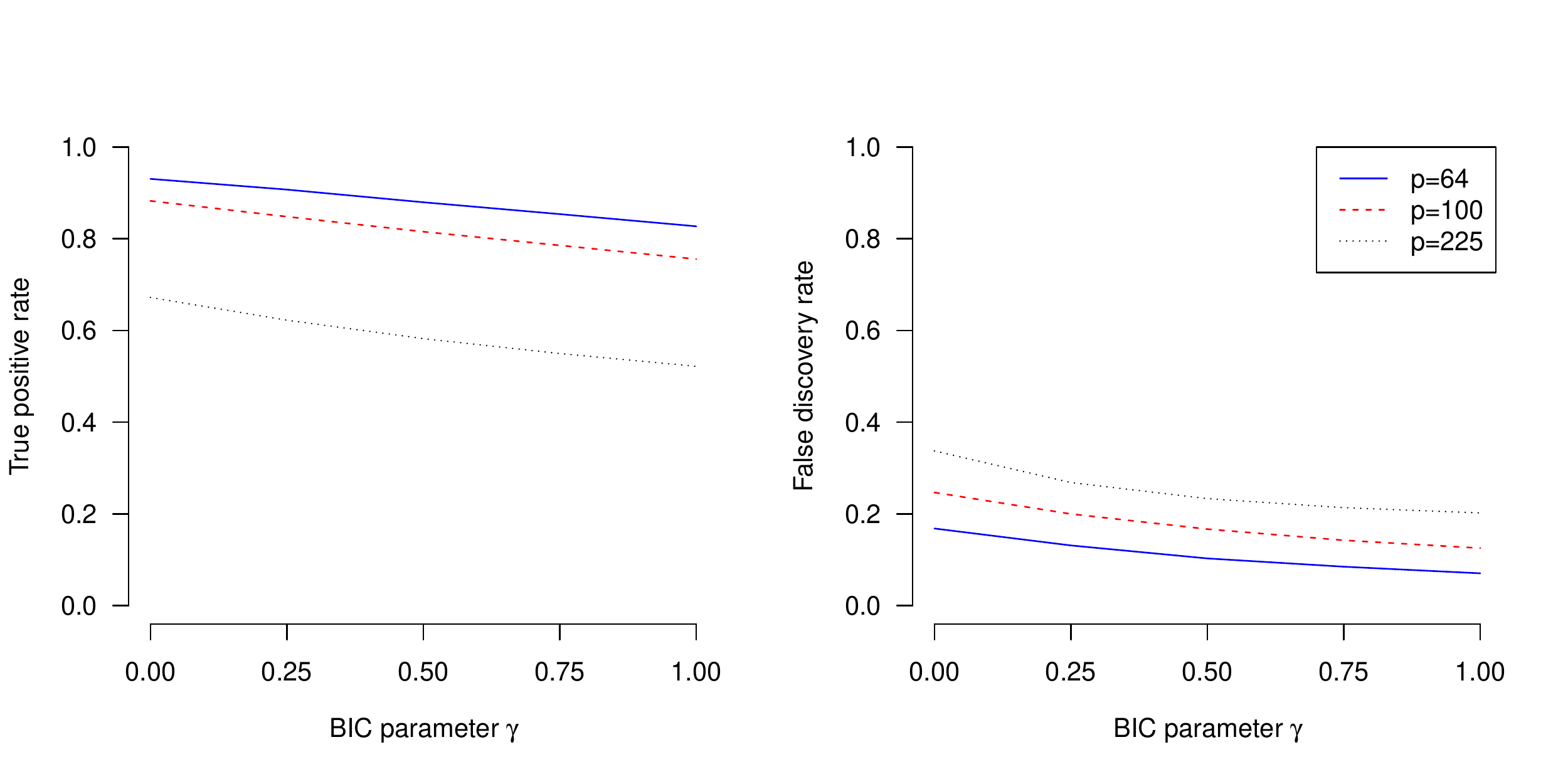}\\
(b) \includegraphics[width=10cm]{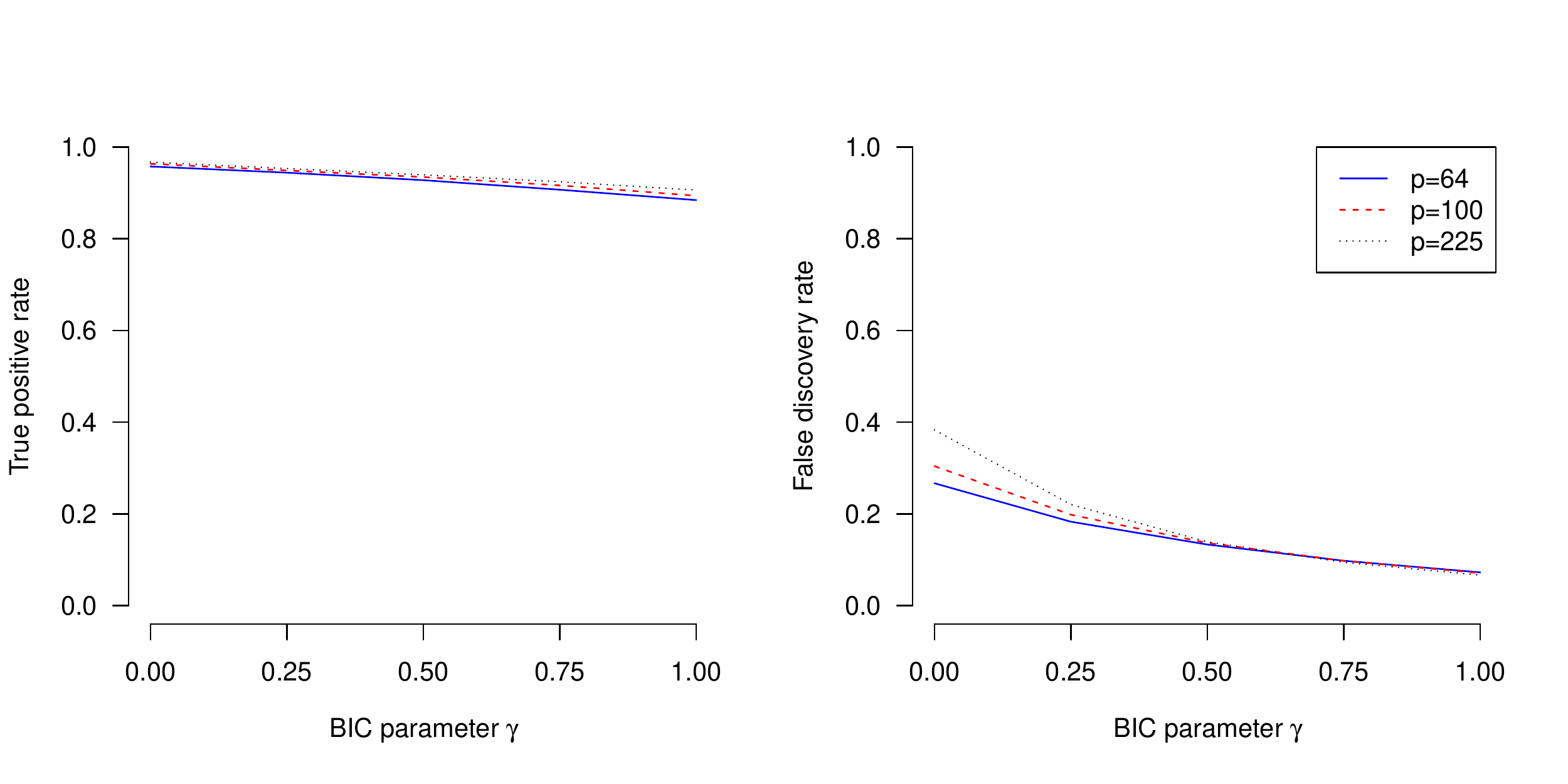}
\caption{Results for the $8$-nearest neighbor graph with (a)
  attractive couplings and (b) random couplings.} 
\label{fig:lattice8}
\end{center}
\end{figure}

\begin{figure}[t]
\begin{center}
(a) \includegraphics[width=10cm]{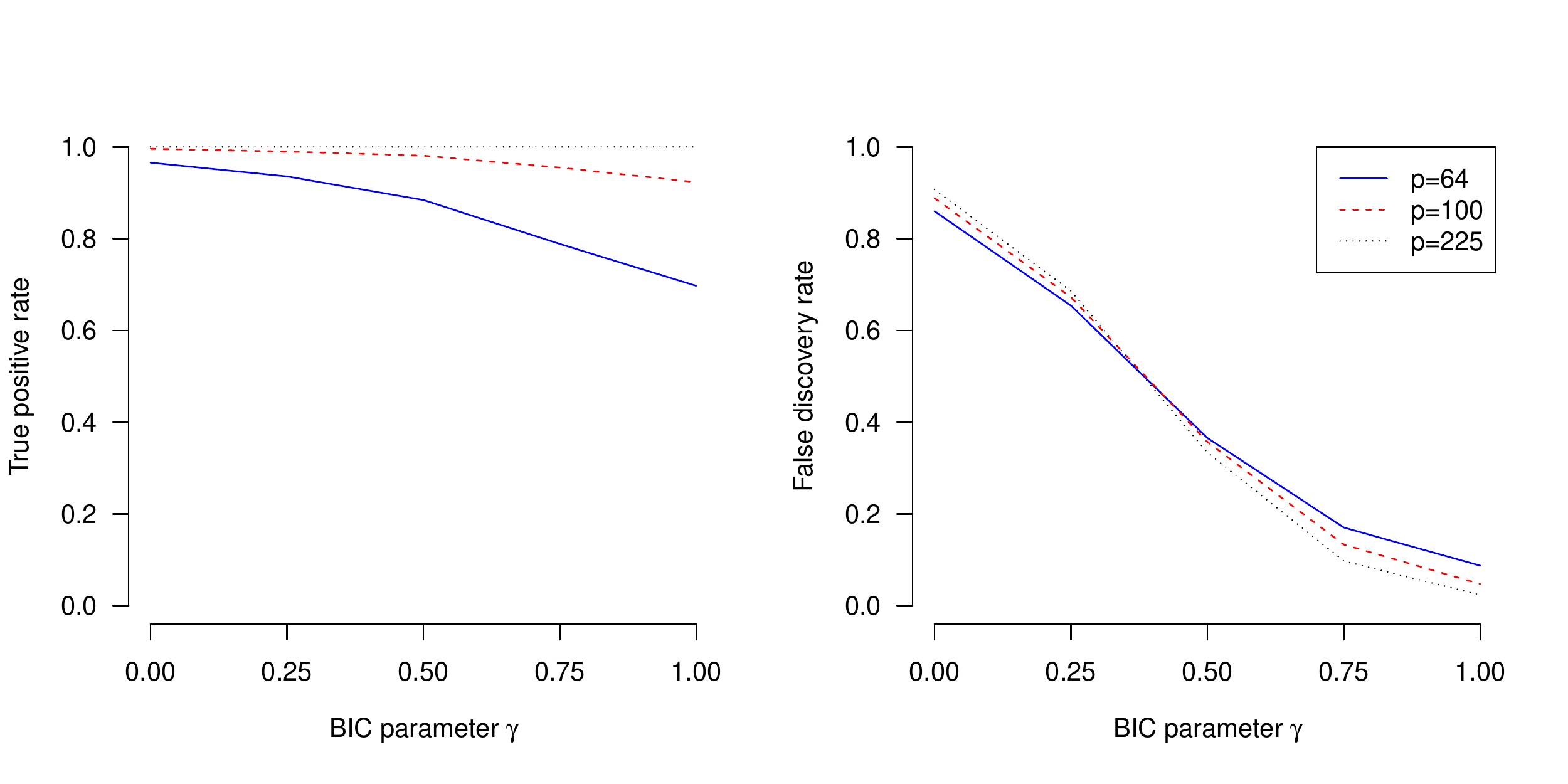}\\
(b) \includegraphics[width=10cm]{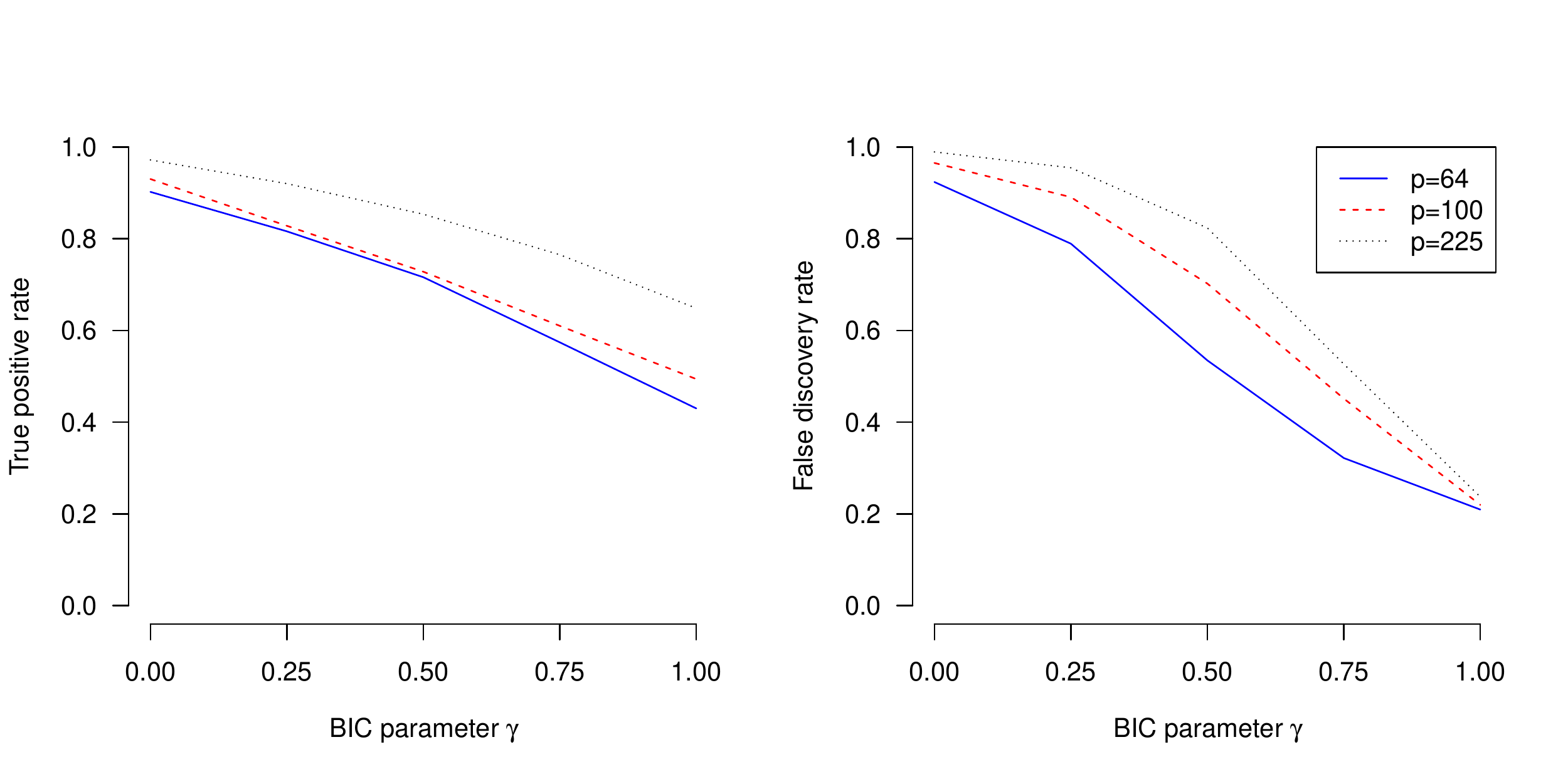}
\caption{Results for the star neighbor graph with (a) linear sparsity
  and (b) logarithmic sparsity.} 
\label{fig:star}
\end{center}
\end{figure}

For each of the three settings, we simulate 100 data sets.  Each time,
we perform nodewise $\ell_1$-penalized logistic regressions as
in~(\ref{eqn:LogisticLasso}), where we consider a wide range of
penalty parameters $\rho$ in order to produce a `path' of candidate
models for that node.  To this end, we used the \texttt{glmnet}
package for \texttt{R} \citep{Friedman:2010}.  For each node, we
then optimize $\mathrm{BIC}_\gamma$ in order to select a model from
the path.  Evaluating $\mathrm{BIC}_\gamma$ involves refitting each
candidate model without $\ell_1$-penalization, which was done using
the function \texttt{glm} in \texttt{R}.  We then symmetrized the
neighborhoods inferred by applying the \textsc{or} rule.  The
resulting graph is compared to the underlying true graph.  This
procedure was carried out for five choices for $\gamma$, namely,
$\gamma\in\{0,0.25,0.5,0.75,1\}$.

We note that the \textsc{and} rule for symmetrization led to
qualitatively similar conclusions, and we do not report the results
here.

\subsubsection{Results}
Results for the $4$- and $8$-nearest neighbor lattices as well as the
star graph are shown in
Figures~\ref{fig:lattice4},~\ref{fig:lattice8}, and~\ref{fig:star},
respectively.  For each scenario, we plot the positive selection rate
(proportion of true edges that are identified) and the false discovery
rate (the proportion of selected edges that are false positives). In
each case, we observe a tradeoff between positive selection rate and
false discovery rate as the parameter $\gamma$ for
$\mathrm{BIC}_\gamma$ varies.  Most notably, for nearly every setting
considered, we see that increasing $\gamma$ from $0$ to a positive
value can significantly reduce the false discovery rate without much
detriment to the positive selection rate, demonstrating a clear
benefit to using the extended BIC with $\gamma>0$ as opposed to the
ordinary $\mathrm{BIC}=\mathrm{BIC}_0$ for this high-dimensional
setting.

\subsection{Real data: regional weather patterns}

\subsubsection{Data and methods for model selection}

We apply $\mathrm{BIC}_\gamma$, and other competing methods, to the
task of inferring dependencies among binary indicators of
precipitation at $p=92$ weather stations across four states in the
Midwest region of the U.S.  The four states are Illinois, Indiana,
Iowa, and Missouri.  We fit models without taking the geographical
locations of the 92 stations into account, but then assess the
performance of different methods by referring to the distance between
weather stations.  Our rationale is that plausible graphs should
primarily link neighboring stations.  (One could argue that longer
links in East-West direction might be more reasonable than longer
North-South links but it seems difficult to quantify this and we did
not attempt to make such refined distinctions.)

The binary variables we consider indicate the existence of
precipitation at each station on a given day.  We model their joint
distribution with an Ising model as in~(\ref{eq:ising}) such that the
precipitation indicator at each node (weather station), conditional on
the observations from the other nodes, follows the logistic regression
model from~(\ref{eq:ising-logits}).  Following the same steps as in
our simulation study, we compute a set of candidate models for each
node using the $\ell_1$-penalized logistic regression and then select
a model from the set using either the ordinary
$\mathrm{BIC}=\mathrm{BIC}_0$ or $\mathrm{BIC}_\gamma$ with
$\gamma\in\{0.25,0.5\}$.  In addition, we considered cross-validation
and stability selection \citep{Meinshausen:2010}.  For
cross-validation, we select the model that minimizes average error on
test sets over $10$ folds.  For stability selection, we used the
\texttt{stabsel} function in the \texttt{mboost} package for
\texttt{R} \citep{R_mboost}, setting the expected support size to
$10$.\footnote{Parameters for the \texttt{stabsel} function were set at $\mathsf{q}=10$, the 
expected support size, and
$\mathsf{cutoff}=0.75$, the midpoint of the suggested range.}
As noted by \citet{Meinshausen:2010}, changing the settings
within a reasonable range did not have a large effect on the output.
For each of the mentioned methods, the node-wise edge selections are
compiled across all nodes to form a graph.  Performance is measured
relative to the true geographical layout of the weather stations,
which as mentioned above is ``unknown'' to the procedures we compare.

To give more specifics, we used data from the United States Historical
Climatology Network \citep{WeatherData}.\footnote{Available at
  \texttt{http://cdiac.ornl.gov/ftp/ushcn\_daily/}} The data consists
of weather-related variables that were recorded on a daily basis.  We
specifically gathered the precipitation data, which gives the total
amount of precipitation for each day.  Seasonality effects on
precipation are not as pronounced in the Midwest as in other parts of
the U.S., and we thus simply consider data from the entire
year.  However, to limit the effects of temporal dependencies between
successive observations, we took data from only the 1st and 16th day
of each month.  The resulting multivariate observations are then
treated as independent.  We removed weather stations where data
availability was low and discarded observations with missing values
for any of the remaining weather stations.  A total of $n=370$ days
and $p=92$ stations remained in the final data set.
Figure~\ref{fig:MidwestMap} shows a map of the 92 stations, along with
an undirected graph representing the Delaunay triangulation of the 92
locations.

\begin{figure}[t]
\begin{center}
\includegraphics[width=5cm]{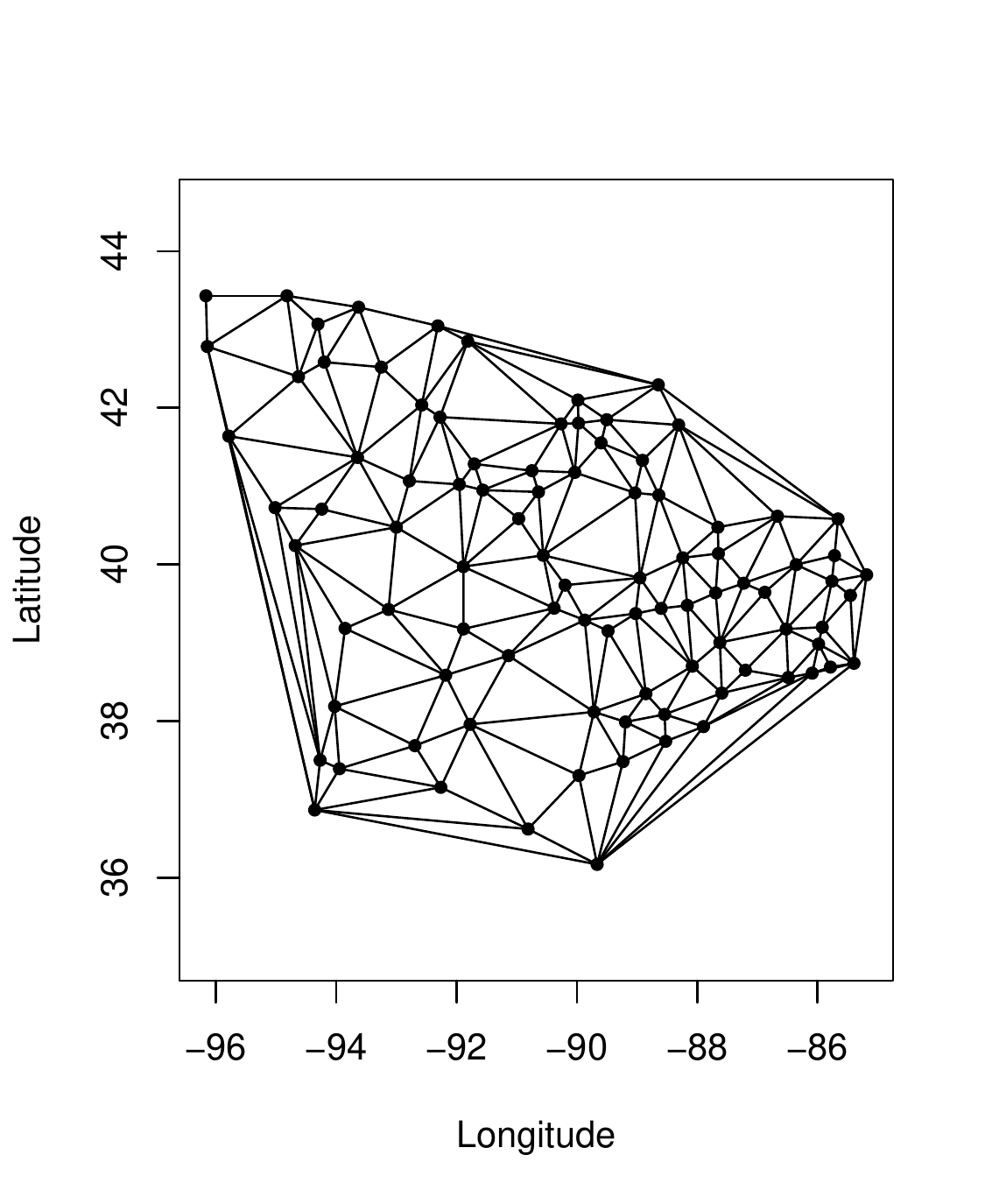}
\caption{Delaunay triangulation for 92 weather stations in Illinois,
  Indiana, Iowa, and Missouri.} 
\label{fig:MidwestMap}
\end{center}
\end{figure}

\subsubsection{Results}

To evaluate the model selection methods, we first compare the inferred
graphs to the geographic layout of the 92 stations by treating the
Delaunay triangulation as a ``true'' underlying graph for the
considered Ising model.  Table~\ref{table:WeatherData} shows the
results we obtain for each method, stated in terms of positive
selection rate (PSR) and false discovery rate (FDR), relative to the
``true'' Delaunay triangulation graph.  
Figure~\ref{fig:RecoveredGraphs} shows the recovered graphs under the
\textsc{and} and \textsc{or} combination rules.

        
\begin{table}[t]
        \caption{Positive selection rate  (\%) and false discovery rate (\%) in
          the weather data experiment,
           where the true graph is defined via the Delaunay
triangulation.} 
\begin{center}
\begin{tabular}{|@{\,}c@{\,}||r|r||r|r|}
\hline
&\multicolumn{2}{c||}{\textsc{and} rule}&\multicolumn{2}{c|}{\textsc{or} rule}\\
&PSR&FDR&PSR&FDR\\\hline\hline
BIC$_{0}$ & 41.98 & 32.93 & 55.73 & 46.72 \\\hline
BIC$_{0.25}$ & 37.40 & 27.94 & 52.67 & 42.02 \\\hline
BIC$_{0.5}$ & 34.73 & 26.61 & 50.38 & 38.89 \\\hline
Cross-validation & 59.16 & 57.65 & 71.37 & 75.65 \\\hline
Stability selection & 45.04 & 38.54 & 53.05 & 45.28 \\\hline
        \end{tabular}\end{center}
        \label{table:WeatherData}
        \end{table}


\begin{figure}[t]
\begin{center}
\includegraphics[width=12cm]{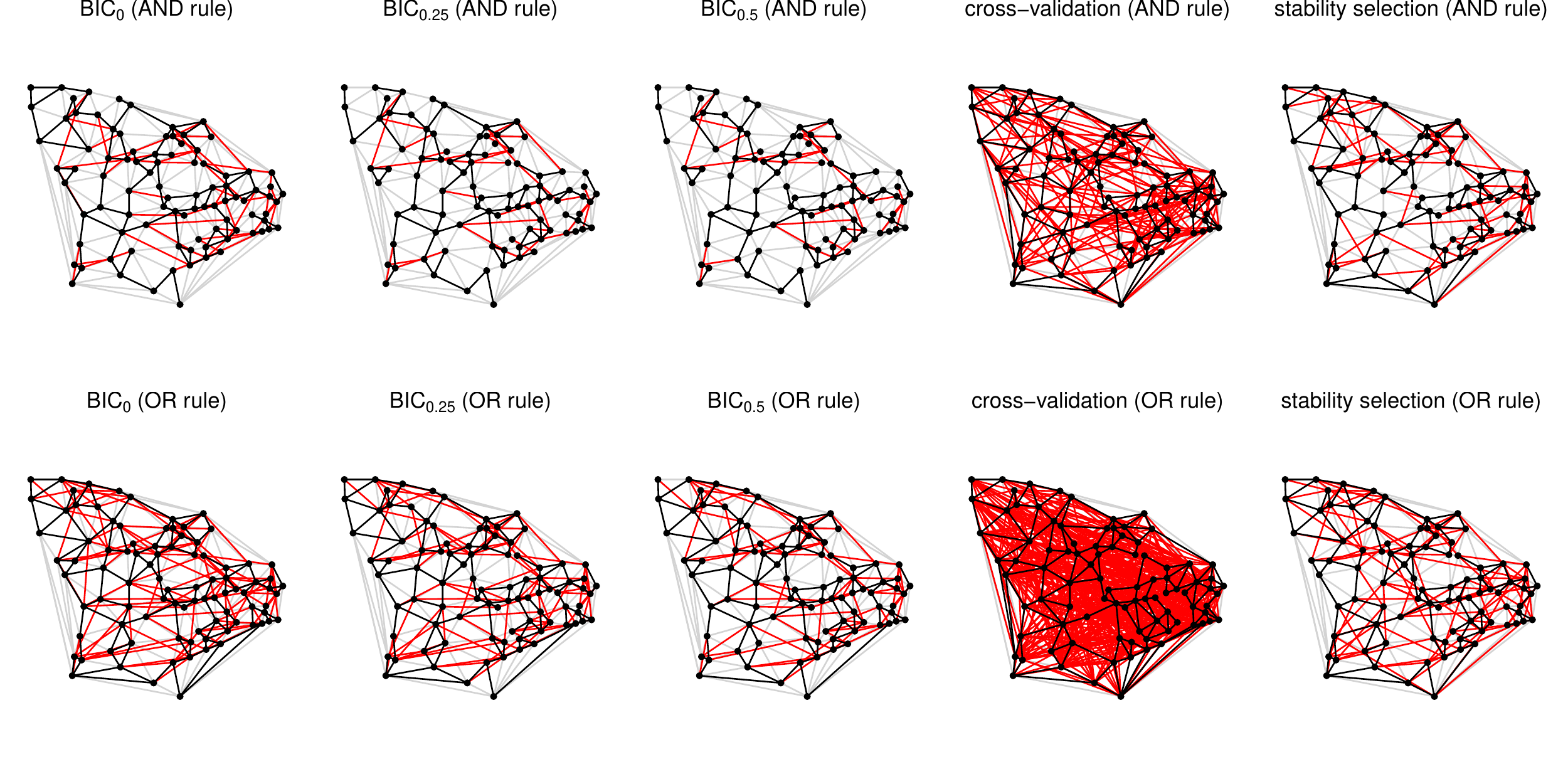}
\caption{Graphs recovered under each method. Black edges indicate true
  positives, red edges indicate false positives, and light gray edges
  indicate false negatives, i.e.\ true edges that were not recovered
  by the method, where the true graph is defined via the Delaunay
  triangulation.}
\label{fig:RecoveredGraphs}
\end{center}
\end{figure}

We see that cross-validation leads to a somewhat higher PSR than the
other methods, under either an \textsc{and} or an \textsc{or} rule.
However, this comes at the cost of a drastically higher FDR.  For
$\mathrm{BIC}_\gamma$, as we increase $\gamma$, we reduce the FDR at a
cost of a lower PSR, as expected.  Stability selection performs similarly 
to $\mathrm{BIC}_0$, but is computationally more expensive.


While it does not seem unreasonable to assume that the edges of the
Delaunay triangulation capture most of the strongest dependencies,
there might be additional dependencies that are not captured by the
edges in the triangulation.  For a different comparison of the methods
that more directly uses the geographic distances between the weather
stations, we apply Gaussian smoothing (scale: standard deviation = 10
miles) to estimate, as a function of $d$, the probability that a
method will infer an edge between two nodes that are $d$ miles apart.
The resulting functions are plotted in
Figure~\ref{fig:GraphSmoothedProbs}, which also includes the same
smoothed function calculation for the graph from the Delaunay
triangulation.

We observe that the smoothed function for the cross-validation methods
(under either the \textsc{or} or the \textsc{and} rule) does not decay
to zero as distance increases.  That is, in this experiment,
cross-validation selects a nonnegligible  proportion of edges
between nodes that are arbitrarily far apart, which is undesirable.
 To a lesser extent, the same problem occurs for stability selection
 combined with the \textsc{or} rule. 
The other methods, in contrast,
yield functions that do decay to zero relatively quickly
 as distance increases. Comparing
the methods that show the decay to zero, we see that for two
nearby weather stations, the $\mathrm{BIC}_\gamma$ methods combined
with the \textsc{or} rule are more likely to select an edge than any
of the remaining methods.  Overall, we
find that the information criteria perform well while requiring the
least amount of computation, and  increasing $\gamma$ provides a
useful trade-off between PSR and FDR.

\begin{figure}[t]
\begin{center}
\includegraphics[width=12cm]{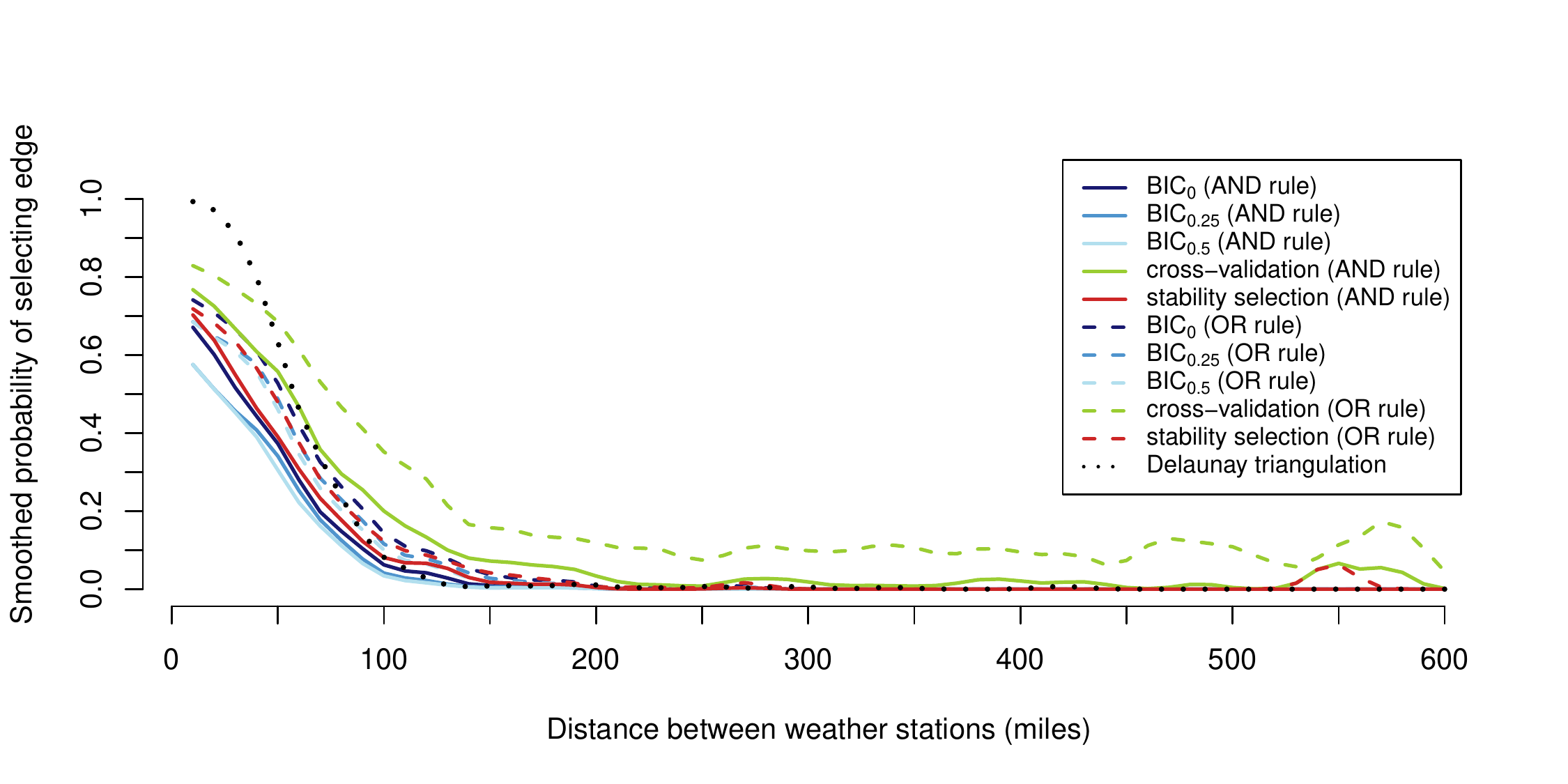}
\caption{Smoothed probability of selecting edges as a function of distance, for each method under the \textsc{or} rule and the \textsc{and} rule.}
\label{fig:GraphSmoothedProbs}
\end{center}
\end{figure}

\section{Discussion}
\label{sec:discussion}

As suggested by our numerical experiments and supported by our
theoretical analysis, Bayesian information criteria extended to
include a penalization term involving the number of covariates are
useful tools for variable selection in logistic regression as well as
neighborhood selection for Ising models.  The additional penalty term
can be motivated via a particular class of prior distributions on the
set of considered models.  We aim to discuss the formal connection
between fully Bayesian approaches and $\mathrm{BIC}_\gamma$ in a
subsequent paper; preliminary results under bounded sparsity are
described in the Ph.D.~thesis of the first author \citep{Foygel:2012}
and in a preprint \citep{Foygel:Drton:arxiv:2011}.

At the heart of this paper is an analysis of logistic regression with
random covariates.  While logistic regression has special properties,
our technical results can be extended to other generalized linear
models.  The main challenge for such generalizations is control of the
third derivative of the cumulant function which might no longer be
bounded.  Preliminary results under bounded sparsity can again be
found in \cite{Foygel:2012} and 
\cite{Foygel:Drton:arxiv:2011}.

\section*{Acknowledgments}

This work was supported by the U.S.~National Science Foundation under
Postdoctoral Fellowship No.~DMS-1203762 and Grant No.~DMS-1305154.  We
would like to thank Kean Ming Tan for helpful comments on draft
versions of this paper.

\appendix

\section{Why are second moments not sufficient?}
\label{sec:why-are-second}

Returning to the setup of Section~\ref{sec:logist-reg}, we recall that
our results on general logistic regression rely on
assumption~\ref{ass:A2}, which places an upper bound on third
moments.  In contrast, the lower bound in assumption~\ref{ass:A1}
concerns second moments (or, put differently, eigenvalues of small
submatrices of the covariance matrix).  It is tempting to try and
weaken our condition~\ref{ass:A2} to a sparse eigenvalue upper bound:
\begin{enumerate}[label=(A\arabic*$'$)]
  \addtocounter{enumi}{1}
  \item \label{ass:A2:second:moment}  
  For any $q$-sparse unit vector $u$, $\EE{(X_1^{\top}u)^2}\leq a_2'$.
\end{enumerate}

However, we now show that \ref{ass:A2:second:moment} is not sufficient
for the desired results in any asymptotic scenario where $q$ grows
with $n$, no matter how slow the growth is assumed to be. In
particular, we construct an example where, even though sparse
eigenvalues are bounded above and below, the Hessian conditions
assumed by \cite{luo:chen:2013:sii} do not hold at $\beta_0=0$ (i.e.\
$J_0=\emptyset$), recall \eqref{LuoChenBoundedChangeAssump}.

For simplicity, let $p=q$, and let $Z$ be a random vector that follows
a uniform distribution on $\{\pm1 \}^q$.  Let
$\mathbf{1}_q=(1,\dots,1)^\top$.  Then define a random vector $X$ by
setting
\[
X=\begin{cases}
  \mathbf{1}_q&\text{with prob.}\ \frac{1}{2q},\\
  -\mathbf{1}_q&\text{with prob.}\ \frac{1}{2q},\\
  Z&\text{with prob.}\ 1-\frac{1}{q}.
\end{cases}
\]
Clearly, $\EE{Z}=\EE{X}=0$, and $\EE{ZZ^{\top}}=\mathbf{I}_q$.
Therefore,
\[
\EE{XX^{\top}} = \frac{1}{q}\cdot \mathbf{1}_q\mathbf{1}_q^{\top} +
\left(1-\frac{1}{q}\right)\cdot \mathbf{I}_q,
\]
has minimal and maximal eigenvalue equal to
\begin{align*}
\lambda_{\min}(\EE{XX^{\top}}) &= 1-\frac{1}{q}, &
\lambda_{\max}(\EE{XX^{\top}}) &= 2 - \frac{1}{q}\;,
\end{align*}
respectively.  We observe that the eigenvalues are bounded above and
below by positive constants that are independent of $q$ (for all $q\geq 2$), as required by~\ref{ass:A1} and~\ref{ass:A2:second:moment}.

Now take the unit vector $u=\frac{1}{\sqrt{q}}\mathbf{1}_q$.  We see
that $|X^{\top}u|=\sqrt{q}$ with probability at least $1/q$.  For
independent random vectors $X_1,\dots,X_n$ that all have the same
distribution as $X$, it follows that $\#\{i:|X_i^\top u|=\sqrt{q}\}$
is at least as large as a
 $\mathrm{Binomial}(n,1/q)$ random variable.  Assume for
simplicity that $n/q$ is an integer.  Then $n/q$ is the median of the
$\mathrm{Binomial}(n,1/q)$ distribution, and so with probability at
least $\frac{1}{2}$, \[\#\{i:|X_i^\top u|=\sqrt{q}\}\geq
\frac{n}{q}\;.\]

In the remainder of this section, we prove that this property
contradicts the inequalities in \eqref{LuoChenBoundedChangeAssump},
which for $\beta_0=0$ state that
\[H(\beta)\preceq (1+\eps) H(0)\text{ for all }\norm{\beta}_2\leq \delta\;.\]
More precisely, for any $\eps>0$ there should be some $\delta=\delta(\eps)>0$
such that the statement holds, and the relationship between $\delta$ and $\eps$
(given by $\delta=\delta(\eps)$) should not depend on the dimensions of the problem.
  Note that since we have simplified the problem
by setting $p=q$, we do not need to make reference to submatrices of
$H(0)$.

Next take $\beta=u\cdot \frac{1}{\sqrt{q}}=\frac{1}{q}\mathbf{1}_q$; then
$\norm{\beta}_2=\frac{1}{\sqrt{q}}$.
 Since $\mathrm{b}''(0)\geq
\mathrm{b}''(z)$ and $\mathrm{b}''(z)=\mathrm{b}''(-z)$
for all $z\in\mathbb{R}$, we have
\begin{multline*}
u^{\top}\left(H(0)-H(\beta)\right)u 
\;=\;
\sum_{i=1}^n (X_i^{\top}u)^2 
\left(\mathrm{b}''(0)-\mathrm{b}''(X_i^{\top}\beta)\right) \\
 \geq \sum_{i: |X_i^{\top}u|=\sqrt{q}}(X_i^{\top}u)^2
\left(\mathrm{b}''(0)-\mathrm{b}''(X_i^{\top}\beta)\right)
= \sum_{i: |X_i^{\top}u|=\sqrt{q}} q\left(\mathrm{b}''(0)-\mathrm{b}''(1)\right)\\
\geq n\left(\mathrm{b}''(0)-\mathrm{b}''(1)\right)>0.05n\;,
\end{multline*}
where the next-to-last step holds with probability at least $\frac{1}{2}$ by the work above.

Now, in accordance with the conditions used by
\cite{luo:chen:2013:sii}, suppose that
\eqref{LuoChenBoundedChangeAssump} holds and that the Hessian is
bounded from above as $H(0)\preceq n\cdot c_2\mathbf{I}_q$, where $c_2$
is a constant that is independent of the dimensions $(n,q)$ of the problem.  Then for
the choice $\eps=0.05/c_2$, we require that there exists some
$\delta>0$, not depending on the dimensions $(n,q)$ of the problem,
such that 
\[
H(\beta)\;\succeq\; (1-\eps)H(0)\;\succeq\; H(0)-n\cdot \eps
c_2\mathbf{I}_q\;,
\] 
with high probability, for all $\beta\in\R^q$ with $\norm{\beta}_2\leq
\delta$. In particular, this implies that for the vector $u$ chosen
above, with high probability, for all $\beta\in\R^J$ with
$\norm{\beta}_2\leq \delta$,
\begin{equation}\label{eqn:counterexample_step}
0.05n = n\cdot \eps c_2 \;\geq\; u^{\top}\left(H(0)-H(\beta)\right)u\;.
\end{equation}
In particular, this must be true for $\beta=u\cdot \delta'$
for any $\delta'\leq \delta$. But from the work above, the bound
\eqref{eqn:counterexample_step} is not true for $\beta=u\cdot\frac{1}{\sqrt{q}}$,
and so we must have $\delta<\frac{1}{\sqrt{q}}$. This contradicts the requirement
that the relationship between $\eps$ and $\delta$ should not
depend on the dimensions of the problem.

\section{Proofs for likelihood and score results}\label{sec:LikelihoodAndScore}

This appendix is devoted to the proof of
Theorem~\ref{thm:HessianToLikelihood}, which gives bounds on
likelihood ratios for models postulating sparsity in the coefficient
vector $\beta$.  The bounds are for fixed values of the covariates
$X_1,\dots,X_n$ that satisfy the Hessian conditions from
Theorem~\ref{thm:RandomToHessian}.  All probability statements in this
section are tacitly understood to be conditional on $X_1,\dots,X_n$.


\subsection{Bounding the score function}

In this section, we prove bounds on the score function at the true
parameter $\beta_0$ that hold with high probability.  These bounds
concern the score function of true sparse models given by sets
$J\supseteq J_0$ with $|J|\le q$. 

Let $\eps'<\eps$ be a positive value that will be specified later.
For integer $r\ge 1$, let $\tau_r,\tilde{\tau}_r>0$ be defined via
\[
\tau_{r}^2\coloneqq
{\frac{2}{(1-\eps')^3}}\cdot\left[{(|J_0|+r)\log\left(\frac{3}{\eps'}\right)
  + \log(4p^{\nu})+ r\log(2p)}\right]
\] 
and
\[
\tilde{\tau}_{r}^2\coloneqq
{\frac{2}{(1-\eps')^3}}\cdot\left[{r\log\left(\frac{3}{\eps'}\right)
  + \log(4p^{\nu})+ r\log(2p)}\right]\;,
\]
respectively.  Assume that
\begin{equation}
  \label{eq:tau:r}
  \tau_{r}\leq
  \frac{\eps'\sqrt{n\,c_{\mathrm{lower}}(\|\beta_0\|_2)^3}
  }{(1-\eps')c_{\mathrm{change}}} 
\end{equation}
for $r\le q-|J_0|$.  This assumption can be guaranteed to
hold by choosing $C_{\mathrm{sample},1}$ in the statement of
Theorem~\ref{thm:HessianToLikelihood} appropriately.  

\begin{lemma}
  \label{lem:score:bound}
  Fix values for the observations $X_1,\dots,X_n$ that satisfy the
  Hessian conditions~(\ref{Hess1}) and~(\ref{Hess2}) from
  Theorem~\ref{thm:RandomToHessian}.  Assume further that the
  inequality in~(\ref{eq:tau:r}) holds.  Then with conditional
  probability at least $1-p^{-\nu}$, we have for all $J\supseteq J_0$
  with $|J|\leq q$ that both
  \begin{equation}
    \label{ScoreBoundEq}
    \Norm{H_J(\beta_0)^{-\frac{1}{2}}s_J(\beta_0)}_2\leq
    \tau_{|J\backslash J_0|}
  \end{equation}
  and
  \begin{equation}
    \label{ScoreBoundEq2}
    \Norm{\mathrm{Proj}_{\mathcal{S}_J^{\perp}}
      \left(H_J(\beta_0)^{-\frac{1}{2}}s_J(\beta_0)\right)}_2\leq 
    \tilde{\tau}_{|J\backslash J_0|}\;,
  \end{equation}
  where the projection is onto the orthogonal complement of the subspace
  \[
  \mathcal{S}_J=\left\{H_J(\beta_0)^{\frac{1}{2}}z\::\:z\in\R^{J_0}\right\}\subset
  \R^J\;.
  \]
\end{lemma}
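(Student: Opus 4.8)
The plan is to bound the two quantities by a union bound over all $J\supseteq J_0$ with $|J|\le q$, for each fixed $J$ controlling the norm of $H_J(\beta_0)^{-1/2}s_J(\beta_0)$ via a discretization (net) argument together with a moment generating function bound. First I would fix $J\supseteq J_0$ with $|J\setminus J_0|=r$, and observe that the random vector $\xi_J := H_J(\beta_0)^{-1/2}s_J(\beta_0)$ is a sum of conditionally independent, mean-zero increments: writing $s_J(\beta_0)=\sum_i X_{iJ}(Y_i-\mathrm{b}'(X_i^\top\beta_0))$, each summand has conditional mean zero given the $X_i$ (which we have conditioned on throughout). Crucially the responses $Y_i$ are bounded, so $Y_i-\mathrm{b}'(X_i^\top\beta_0)\in[-1,1]$, and hence each increment is sub-Gaussian with a proxy variance controlled by $X_{iJ}X_{iJ}^\top$. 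This is the step where the boundedness of the response (logistic model) does the work in place of a concentration hypothesis on the $X_i$.

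Next I would pass from $\|\xi_J\|_2$ to a supremum over directions on the unit sphere $S^{r+|J_0|-1}\subset\R^J$. For a fixed unit vector $v$, $v^\top\xi_J = \sum_i \langle v, H_J(\beta_0)^{-1/2}X_{iJ}\rangle (Y_i-\mathrm{b}'(X_i^\top\beta_0))$ is a sum of bounded independent terms whose conditional variances sum to $v^\top H_J(\beta_0)^{-1/2}\big(\sum_i X_{iJ}X_{iJ}^\top \mathrm{b}''(X_i^\top\beta_0)\big)H_J(\beta_0)^{-1/2}v = \|v\|_2^2 = 1$ — this is precisely why the standardization by $H_J(\beta_0)^{-1/2}$ is the right one, and why no explicit Hessian bound enters the variance. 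A Chernoff/Hoeffding bound then gives $\PP{v^\top\xi_J > t}\le \exp(-t^2/2)$ (possibly up to a constant depending on $\eps'$ absorbed into the $(1-\eps')^{-3}$ factor; I expect the Hessian conditions \eqref{Hess1}--\eqref{Hess2} and \eqref{eq:tau:r} to be used here to control higher-order corrections so that the sub-Gaussian exponent is $(1-\eps')^3 t^2/2$). A standard $\tfrac{\eps'}{3}$-net of the sphere in $\R^{J}$ has cardinality at most $(3/\eps')^{|J_0|+r}$, and controlling $\xi_J$ on the net up to a factor $(1-\eps'/3)^{-1}$ controls $\|\xi_J\|_2$; this accounts for the $(|J_0|+r)\log(3/\eps')$ term inside $\tau_r^2$. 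For \eqref{ScoreBoundEq2} the only change is that we project onto $\mathcal S_J^\perp$, a subspace of dimension $r=|J\setminus J_0|$, so the net lives in an $r$-dimensional sphere and contributes only $r\log(3/\eps')$, explaining the smaller constant in $\tilde\tau_r$.

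Finally I would take a union bound. The number of sets $J\supseteq J_0$ with $|J\setminus J_0|=r$ is $\binom{p-|J_0|}{r}\le p^r\le (2p)^r$, which together with the net cardinality gives a failure probability for level $r$ of at most $(3/\eps')^{|J_0|+r}(2p)^r\exp(-(1-\eps')^3\tau_r^2/2)$; plugging in the definition of $\tau_r$ makes this $\le \tfrac14 p^{-\nu}\cdot 2^{-r}$ or similar, and summing the geometric series over $r\ge 0$ (and doing the same for $\tilde\tau_r$) yields total failure probability $\le p^{-\nu}$. The main obstacle I anticipate is not the union-bound bookkeeping but making the direction-wise tail bound clean: the summands $\langle v, H_J(\beta_0)^{-1/2}X_{iJ}\rangle(Y_i-\mathrm{b}'(X_i^\top\beta_0))$ are bounded but their $\ell_\infty$ bound can be as large as $O(\sqrt{\text{something}})$ unless one exploits that a Bernoulli-type increment with small variance is better controlled by a Bernstein inequality than by Hoeffding; this is exactly where the hypothesis \eqref{eq:tau:r} — which forces $\tau_r$ to be small relative to $\sqrt{n}\,c_{\mathrm{lower}}^{3/2}/c_{\mathrm{change}}$ — should enter, guaranteeing we are in the sub-Gaussian regime of the Bernstein bound so that the exponent is genuinely quadratic in $\tau_r$ with the stated constant.
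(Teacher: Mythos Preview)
Your overall architecture matches the paper exactly: fix $J$, control $\|H_J(\beta_0)^{-1/2}s_J(\beta_0)\|_2$ by a direction-wise Chernoff bound plus an $\eps'$-net of size $(3/\eps')^{|J|}$ (respectively $(3/\eps')^{|J\setminus J_0|}$ for the projected version), then union-bound over $r=|J\setminus J_0|$ using $\binom{p-|J_0|}{r}\le p^r$ and sum the geometric series. Your observation that the conditional variance of $v^\top H_J(\beta_0)^{-1/2}s_J(\beta_0)$ is exactly $1$ is correct and is the reason the standardization is chosen this way.

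The gap is in the direction-wise tail bound. Your proposed route via Bernstein does not deliver the exponent $(1-\eps')\tau'^2/2$ with the stated $\tau_r$: the a.s.\ bound on each summand is $M=\max_i|\langle v,H_J(\beta_0)^{-1/2}X_{iJ}\rangle|\le a_3\sqrt{q}/\sqrt{n\,c_{\mathrm{lower}}}$, so the Bernstein sub-exponential term $M\tau_r/3$ is of order $\sqrt{q}\,\eps'$ under \eqref{eq:tau:r}, not $\eps'$. Thus Bernstein would require an extra $\sqrt{q}$ in the sample-size constraint, which the lemma as stated does not allow. The paper avoids this by computing the exact Bernoulli moment generating function,
\[
\EE{e^{sY_i}\mid X_i}=\exp\{\mathrm{b}(X_i^\top\beta_0+s)-\mathrm{b}(X_i^\top\beta_0)\},
\]
and Taylor-expanding $\mathrm{b}$ to second order. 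The second-order remainder is precisely $\tfrac{\tau^2}{2}\,u^\top H_J(\beta_0)^{-1/2}H_J\big(\beta_0+\xi\tau H_J(\beta_0)^{-1/2}u\big)H_J(\beta_0)^{-1/2}u$, i.e.\ the Hessian evaluated at a perturbed point. Condition~\eqref{eq:tau:r} guarantees $\|\xi\tau H_J(\beta_0)^{-1/2}u\|_2\le \eps'\,c_{\mathrm{lower}}/c_{\mathrm{change}}$, and then Proposition~\ref{prop:hess:in:luo:chen:form} (i.e.\ \eqref{Hess1}--\eqref{Hess2}) gives $H_J(\cdot)\preceq(1+\eps')H_J(\beta_0)$, yielding the clean bound $\exp\{-\tau'^2(1-\eps')/2\}$ with no stray $q$-dependence. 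So the ``higher-order correction'' you anticipated is not a Bernstein $M$-term but literally the perturbed Hessian, controlled by the Hessian conditions themselves.
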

To be clear, in the definition of $\mathcal{S}_J$, we use $\R^{J_0}$
to denote the coordinate subspace of vectors $z\in\R^J$ with $z_j=0$
for all $j\in J\setminus J_0$.
\begin{proof}
  We will establish the bounds in~(\ref{ScoreBoundEq})
  and~(\ref{ScoreBoundEq2}) by using an $\epsilon$-net argument based
  on the fact that for any vector $z\in\mathbb{R}^p$,
  \[
  \|z\|_2=\sup\left\{\, u^\top z \,:\, u\in\mathbb{R}^p,\, \|u\|_2=1\,\right\}.
  \]
  To prepare for the argument, fix a superset $J\supseteq J_0$, a vector
  $u\in\R^J$, and a scalar $\tau>0$.  Observe that
  \begin{multline}
    \label{eq:score:bound:ineq}
    \pr\left\{u^{\top}H_J(\beta_0)^{-\frac{1}{2}}s_J(\beta_0)>\tau
      \Big\vert X\right\} \\
    \leq \EE{\exp\left\{\tau\cdot
        u^{\top}H_J(\beta_0)^{-\frac{1}{2}}s_J(\beta_0)-
        \tau^2\right\}
      \Big\vert  X}.
  \end{multline}
  By definition, 
  \begin{equation}
    \label{eq:scoreJ}
    s_J(\beta_0)=\sum_{i=1}^n
    X_{iJ}(Y_i-\bb'(X_i^{\top}\beta_0)),
  \end{equation}
  and since the conditional distribution of $Y_i$ given $X_i$ belongs to
  an exponential family, we have 
  \begin{equation}
    \label{eq:mgf:y}
    \EE{\exp\{s Y_i\}\vert
      X_i}=\exp\left\{\bb(X_i^{\top}\beta_0+s)-\bb(X_i^{\top}\beta_0)\right\}.
  \end{equation}
  Plugging~(\ref{eq:scoreJ}) into~(\ref{eq:score:bound:ineq}) and
  using~(\ref{eq:mgf:y}), we obtain that
  \begin{align*}
    &\log
    \pr\left\{u^{\top}H_J(\beta_0)^{-\frac{1}{2}}s_J(\beta_0)>\tau
      \Big\vert X\right\} \\
    &\le \sum_{i=1}^n \left[\bb\left(X_i^{\top}(\beta_0+\tau 
        H_J(\beta_0)^{-\frac{1}{2}}u)\right)
      -\bb\left(X_i^{\top}\beta_0\right)\right]\\
    &\qquad\qquad\qquad - \sum_{i=1}^n \left[
      \bb'(X_i^{\top}\beta_0)\cdot \tau 
      X_{iJ}^{\top}H_J(\beta_0)^{-\frac{1}{2}}u\right]  \;-\;
    \tau^2
    \\
    &=\frac{1}{2}\sum_{i=1}^n\left[
      \bb''\left(X_i^{\top}(\beta_0+\xi\cdot \tau
        H_J(\beta_0)^{-\frac{1}{2}}u)\right)\cdot
      \left(\tau X_{iJ}^{\top}H_J(\beta_0)^{-\frac{1}{2}}u\right)^2\right]
    \;-\; \tau^2, 
  \end{align*}
  where the last equation is a 2nd-order Taylor expansion with
  $\xi\in[0,1]$.  We may rewrite the inequality just obtained as
  \begin{multline*}
    \log
    \pr\left\{u^{\top}H_J(\beta_0)^{-\frac{1}{2}}s_J(\beta_0)>\tau\Big\vert
      X\right\}
    \leq \\
    \frac{\tau^2}{2} u^{\top}H_J(\beta_0)^{-\frac{1}{2}}
    H_J\left(\beta_0+\xi\cdot \tau
      H_J(\beta_0)^{-\frac{1}{2}}u\right)H_J(\beta_0)^{-\frac{1}{2}}u
    \;-\; \tau^2\;.
  \end{multline*}
  Now, for
  $\tau=\tau'_r\coloneqq\tau_{r}(1-\eps')$ with $r=|J\setminus
  J_0|$ and a vector
  $u\in\mathbb{R}^J$ with $\norm{u}_2\leq 1$, it holds that
  \[
  \Norm{\xi\cdot \tau'_r
    H_J(\beta_0)^{-\frac{1}{2}}u}_2\leq
  \tau_{r}(1-\eps')\cdot\sqrt{\frac{1}{n\, c_{\mathrm{lower}}(\|\beta_0\|_2) }}\leq 
  \eps'\cdot \frac{c_{\mathrm{lower}}(\|\beta_0\|_2)}{c_{\mathrm{change}}}\;;
  \]
  recall~(\ref{eq:tau:r}).  Via~(\ref{LuoChenBoundedChangeAssump}) and~(\ref{eq:luo:chen:delta}), the assumed
  Hessian conditions imply that
  \begin{multline*}
    H_J(\beta_0)^{-\frac{1}{2}}H_J\left(\beta_0+\xi\cdot \tau'_{r}\cdot
      H_J(\beta_0)^{-\frac{1}{2}}u\right)H_J(\beta_0)^{-\frac{1}{2}} \\
    \preceq\;
    H_J(\beta_0)^{-\frac{1}{2}}\big[(1+\eps')H_J(\beta_0)\big]
    H_J(\beta_0)^{-\frac{1}{2}}\;=\;
    (1+\eps')\cdot \mathbf{I}_J\;,
  \end{multline*}
  and thus
  \begin{multline}\label{ExpoProb}
    \pr\left\{u^{\top}H_J(\beta_0)^{-\frac{1}{2}}s_J(\beta_0)>\tau'_{r}
      \Big\vert X\right\} \\
    \leq\; \exp\left\{\frac{\tau'_{r}{}^2}{2} (1+\eps')- \tau'_{r}{}^2\right\}
    =\exp\left\{-\frac{\tau'_{r}{}^2}{2}\left(1 -\eps'\right)\right\}\;.
  \end{multline}
  
  Next, let $\mathcal{U}_J$ be an $\eps'$-net for the unit sphere in
  $\R^J$ with respect to the Euclidean norm, that is, $\mathcal{U}_J$
  is a subset of the sphere such that for any unit vector $v$ there
  exists a (unit) vector $u\in\mathcal{U}_J$ such that
  $\|u-v\|_2<\epsilon'$.  In particular, for the unit vector
  \[
  v=\frac{H_J(\beta_0)^{-\frac{1}{2}}s_J(\beta_0)}{
    \Norm{H_J(\beta_0)^{-\frac{1}{2}}s_J(\beta_0)}_2} 
  \]
  and corresponding $u\in\mathcal{U}_J$ with $\norm{u-v}_2\leq \eps'$,
  we see that
  \[u^{\top}v = v^{\top}v + (u-v)^{\top}v \geq \norm{v}^2_2 - \norm{u-v}_2 \cdot\norm{v}_2 \geq 1-\eps'\;,\]
  and so
  \begin{equation}
    \label{eq:net:to:norm}
    u^{\top}H_J(\beta_0)^{-\frac{1}{2}}s_J(\beta_0)\geq 
    (1-\eps')\Norm{H_J(\beta_0)^{-\frac{1}{2}}s_J(\beta_0)}_2\;.
  \end{equation}

  We can take the $\epsilon$-net such that
  \begin{equation}
    \label{eq:net:size}
    \left|\mathcal{U}_J\right|\leq
    \left(1+\frac{2}{\epsilon'}\right)^{|J|}\leq 
    \left(\frac{3}{\eps'}\right)^{|J|}\; ;
  \end{equation}
  see Proposition 1.3 in Chapter 15 of \cite{lorentz:etal:1996} or Lemma
  14.27 in \cite{buhlmann:vandegeer:2011}.
  Inequality~(\ref{eq:net:to:norm}) and a union bound yield that
  \begin{align*}
    \pr&\left\{\Norm{H_J(\beta_0)^{-\frac{1}{2}}s_J(\beta_0)}_2>
      \tau_{r}\right\}\\
    &\leq\pr\left\{u^{\top}H_J(\beta_0)^{-\frac{1}{2}}s_J(\beta_0)\geq
      \tau'_{r}\text{ for some }u\in\mathcal{U}_J\right\}\\ 
    &\leq\left|\mathcal{U}_J
    \right|\cdot\pr\left\{u^{\top}H_J(\beta_0)^{-\frac{1}{2}}s_J(\beta_0)\geq 
      \tau'_{r}\text{ for any single }u\in\mathcal{U}_J\right\}.
  \end{align*}
  Applying inequalities~(\ref{ExpoProb}) and~(\ref{eq:net:size}), and
  plugging in the definition of $\tau'_{r}$, we obtain that
  \begin{multline}
    \label{eq:net:argument:ineq:one:point}
    \pr\left\{\Norm{H_J(\beta_0)^{-\frac{1}{2}}s_J(\beta_0)}_2>
      \tau_{r}\right\} \;\leq\; \left(\frac{3}{\eps'}\right)^{|J|}\cdot
    \exp\left\{-\frac{\tau'_{r}{}^2}{2}\left(1 -\eps'\right)\right\} \\
    \;=\; \exp\left\{-\log(4p^{\nu})- r\log(2p)\right\} 
    \;=\;
    \frac{1}{4(2p)^{r}}\cdot\frac{1}{p^{\nu}}\;. 
  \end{multline}
  
  Finally, to consider all sets $J\supseteq J_0$ with $|J|\leq q$
  simultaneously, we apply the union bound
  \begin{align*}
    &\pr\left\{\Norm{H_J(\beta_0)^{-\frac{1}{2}}s_J(\beta_0)}_2>
      \tau_{|J\backslash J_0|}\text{ for some }J\supseteq J_0,|J|\leq
      q\right\}\\ 
    &\leq \sum_{r=0}^{q-|J_0|}
    \pr\left\{\Norm{H_J(\beta_0)^{-\frac{1}{2}}s_J(\beta_0)}_2\geq
      \tau_{r}\text{ for some }J\supseteq J_0\text{ with } |J\backslash
      J_0|=r\right\}.
  \end{align*}
  Using the fact that there are at most ${p\choose r}\leq p^{r}$ sets
  $J\supseteq J_0$ with $|J\backslash J_0|=r$,
  inequality~(\ref{eq:net:argument:ineq:one:point}) and another union
  bound imply that
  \begin{multline*}
    \pr\left\{\Norm{H_J(\beta_0)^{-\frac{1}{2}}s_J(\beta_0)}_2>
      \tau_{|J\backslash J_0|}\text{ for some }J\supseteq J_0,|J|\leq
      q\right\}\\
    \;\leq\;
    \sum_{r=0}^{q-|J_0|}
    p^r\cdot\frac{1}{4(2p)^{r}}\cdot\frac{1}{p^{\nu}} 
    \;\leq\;
    \frac{1}{4p^{\nu}} \sum_{r=0}^{\infty} \frac{1}{2^r}
    \;=\;
    \frac{1}{2p^{\nu}}\;.
  \end{multline*}

  To prove the analogous statement about the projection operator, we
  instead take $\mathcal{U}_J$ to be an $\eps'$-net of the unit sphere
  in the orthogonal complement $\mathcal{S}_J^{\perp}\subset \R^J$,
  which has dimension $|J\backslash J_0|$.  Consequently, we have
  $|\mathcal{U}_J|\leq (3/\eps')^{|J\backslash J_0|}$.  The rest of
  the argument proceeds identically with a bound of $1/(2p^\nu)$ for
  the probability of the considered event.  A union bound over the two
  cases gives the claimed bound of $1/p^\nu$ for the probability of
  both inequalities holding.
\end{proof}

\subsection{Bounding the likelihood function}

In this subsection we analyze the log-likelihood ratios of sparse
models given by sets $|J|\leq q$, proving Theorem
\ref{thm:HessianToLikelihood}.  It suffices to show that the two
statements \ref{thm2:a} and \ref{thm2:b} in
Theorem~\ref{thm:HessianToLikelihood} are implied by the bounds
\eqref{ScoreBoundEq} and \eqref{ScoreBoundEq2} from
Lemma~\ref{lem:score:bound}.  The probability of the latter bounds
holding was shown to be large in the previous subsection.  In our
proof we consider a fixed vector $\beta_0$.  The statement being true
uniformly for vectors with $\norm{\beta_0}_2$ bounded by $a_0$ follows
from the monotonicity of the functions $c_\mathrm{lower}$ and
$c_\mathrm{upper}$.

Fix any $J\supseteq J_0$ with $|J|\leq q$.  Consider any $\beta\in
\R^J$ and let $\gamma=\beta-\beta_0$.  
Let
\[
\tilde{\gamma}=
H_J(\beta_0)^{-\frac{1}{2}}\cdot\mathrm{Proj}_{\mathcal{S}_J}\left(H_J(\beta_0)^{\frac{1}{2}}\gamma\right)\in\R^{J_0}\;,
\]
where $\mathcal{S}_J\subset \R^J$ is the $|J_0|$-dimensional subspace
defined in Lemma~\ref{lem:score:bound}.  By definition,
$H_J(\beta_0)^{\frac{1}{2}}\tilde{\gamma}=
\mathrm{Proj}_{\mathcal{S}_J}\left(H_J(\beta_0)^{\frac{1}{2}}\gamma\right)$, 
and thus
\begin{equation}
  \label{eq:likelihoood:bound:pythagoras}
  \Norm{H_J(\beta_0)^{\frac{1}{2}}\gamma}^2_2 =
  \Norm{H_J(\beta_0)^{\frac{1}{2}}\tilde{\gamma}}^2_2 +
  \Norm{H_J(\beta_0)^{\frac{1}{2}}(\gamma - \tilde{\gamma})}^2_2\;.
\end{equation}
Using~(\ref{Hess1}), we obtain that
\begin{align}
  \nonumber
\norm{\tilde{\gamma}}_2
&\leq
\frac{1}{\sqrt{nc_{\mathrm{lower}}(\|\beta_0\|_2)}}
\Norm{\mathrm{Proj}_{\mathcal{S}_J} 
\left(H_J(\beta_0)^{\frac{1}{2}}\gamma\right)}_2\\
  \nonumber
&\leq 
\frac{1}{\sqrt{nc_{\mathrm{lower}}(\|\beta_0\|_2)}}
\Norm{H_J(\beta_0)^{\frac{1}{2}}\gamma}_2\\
\label{eq:likelihood:bound:ineq:tilde}
&\leq
\sqrt{\frac{ c_{\mathrm{upper}}(\|\beta_0\|_2) }{
    c_{\mathrm{lower}}(\|\beta_0\|_2) }}\;\norm{\gamma}_2\;.
\end{align}

We now compare the values of the log-likelihood function at $\beta_0$,
$\beta_0+\gamma$, and $\beta_0+\tilde{\gamma}$, using
Taylor-expansions.  Using
Proposition~\ref{prop:hess:in:luo:chen:form}, we calculate
\begin{multline}
  \label{eqn:LikelihoodBoundOriginal}
  \ln(\beta_0+ \gamma)-\ln(\beta_0)
  =s_J(\beta_0)^{\top} \gamma- \frac{1}{2} \gamma ^{\top}H_J(\beta_0 +
  \xi\cdot \gamma)\gamma\\ 
  \leq s_J(\beta_0)^{\top} \gamma - \frac{1}{2}\left(1 -
    \frac{c_{\mathrm{change}}}{c_{\mathrm{lower}}(\|\beta_0\|_2)}\cdot
    \norm{\gamma}_2\right) \;\gamma ^{\top}H_J(\beta_0) \gamma
\end{multline}
and
\begin{multline}
  \label{eqn:LikelihoodBoundProj}
  \ln(\beta_0+\tilde{\gamma})-\ln(\beta_0)
  =s_J(\beta_0)^{\top}\tilde{\gamma} - \frac{1}{2}\tilde{\gamma}^{\top}H_J(\beta_0 + \tilde\xi\cdot\tilde{\gamma})\tilde{\gamma}\\
  \geq s_J(\beta_0)^{\top}\tilde{\gamma} - \frac{1}{2}\left(1 +
    \frac{c_{\mathrm{change}}}{c_{\mathrm{lower}}(\|\beta_0\|_2)}\cdot
    \norm{\tilde{\gamma}}_2\right)\;
  \tilde{\gamma}^{\top}H_J(\beta_0)\tilde{\gamma}\;,
\end{multline}
where $\xi,\tilde\xi\in [0,1]$.
Subtracting~(\ref{eqn:LikelihoodBoundProj})
from~(\ref{eqn:LikelihoodBoundOriginal}) and
using~(\ref{eq:likelihoood:bound:pythagoras}), we find that
\begin{multline*}
\ln(\beta_0+\gamma)-\ln(\beta_0+\tilde{\gamma})
\;\leq\;
s_J(\beta_0)^{\top}(\gamma-\tilde{\gamma})
\\- \frac{1}{2}\Norm{H_J(\beta_0)^{\frac{1}{2}}(\gamma - \tilde{\gamma})}^2_2
 + \frac{c_{\mathrm{change}}\left(
\norm{\gamma}_2 \gamma ^{\top}H_J(\beta_0) \gamma  
 + \norm{\tilde{\gamma}}_2
 \tilde{\gamma}^{\top}H_J(\beta_0)\tilde{\gamma}\right)}{2c_{\mathrm{lower}}(\|\beta_0\|_2)}\; .
\end{multline*}
Inequalities~(\ref{Hess1}) and~(\ref{eq:likelihood:bound:ineq:tilde})
yield that 
\begin{multline}
  \label{eqn:LikehoodBoundParts}
  \ln(\beta_0+\gamma)-\ln(\beta_0+\tilde{\gamma})\;\leq\;
  s_J(\beta_0)^{\top}(\gamma-\tilde{\gamma})\\
  - \frac{1}{2}\Norm{H_J(\beta_0)^{\frac{1}{2}}(\gamma - \tilde{\gamma})}^2_2
  + n\cdot c_{\mathrm{change}}\left(\frac{
      c_{\mathrm{upper}}(\|\beta_0\|_2)}{c_{\mathrm{lower}}(\|\beta_0\|_2)}\right)^{\frac{3}{2}}\norm{\gamma}_2^3
  \;.
\end{multline}
Writing
\[
s_J(\beta_0)^{\top}(\gamma-\tilde{\gamma}) \;=\; \left(H_J(\beta_0)^{-\frac{1}{2}}s_J(\beta_0)\right)^{\top}\left(H_J(\beta_0)^{\frac{1}{2}}(\gamma-\tilde{\gamma})\right) 
\]
and noting that $H_J(\beta_0)^{\frac{1}{2}}(\gamma - \tilde{\gamma})
\in \mathcal{S}_J^{\perp}$, we see that the first two terms of the
bound in \eqref{eqn:LikehoodBoundParts} can be bounded as
\begin{align*}
  s_J(\beta_0)^{\top}(\gamma-\tilde{\gamma})
  - \frac{1}{2}\Norm{H_J(\beta_0)^{\frac{1}{2}}(\gamma -
    \tilde{\gamma})}^2_2
  &\leq \sup_{z\in\mathcal{S}_J^{\perp}} \left(H_J(\beta_0)^{-\frac{1}{2}}s_J(\beta_0)\right)^{\top}z
  - \frac{1 }{2}\norm{z}^2_2\\
  &= \frac{1}{2}\Norm{\mathrm{Proj}_{\mathcal{S}_J^{\perp}}\left(H_J(\beta_0)^{-\frac{1}{2}}s_J(\beta_0)\right)}^2_2\;,
\end{align*}
which is at most $\tilde{\tau}_{|J\backslash J_0|}^2/2$ by the
assumed inequality~(\ref{ScoreBoundEq2}).

Consider now the MLE $\beta=\wh{\beta}_J=\beta_0+\gamma$, and define
$\tilde{\gamma}\in\R^{J_0}$ as before.   Then $\ln(\wh{\beta}_{J_0})\geq\ln(\beta_0+\tilde{\gamma})$
because
$\beta_0+\tilde{\gamma}\in\R^{J_0}$,
and so applying the calculations above, we have
\begin{align}
  \nonumber
  \ln(\wh{\beta}_J)-\ln(\wh{\beta}_{J_0})
  &\leq \ln(\wh{\beta}_J)-\ln(\beta_0+\tilde{\gamma})\\
  \label{eq:score:bound:diff:maxima}
  &\leq \frac{1}{2}\tilde{\tau}_{|J\backslash J_0|}^2+ n\cdot
  c_{\mathrm{change}}\left(\frac{c_{\mathrm{upper}}(\|\beta_0\|_2)}{
      c_{\mathrm{lower}}(\|\beta_0\|_2)}\right)^{\frac{3}{2}} 
  \cdot \norm{\wh{\beta}_J-\beta_0}^3_2\;.
\end{align}
We can thus bound the difference between the maxima of the
log-likelihood functions if we can bound the distance
$\norm{\wh{\beta}_J-\beta_0}_2$.

To bound $\norm{\wh{\beta}_J-\beta_0}_2$, we return to
\eqref{eqn:LikelihoodBoundOriginal}.  The assumed
inequality~(\ref{ScoreBoundEq}) implies that
\begin{align*}
  s_J(\beta_0)^{\top}\gamma&=\left(H_J(\beta_0)^{-\frac{1}{2}}s_J(\beta_0)\right)^{\top}\left(H_J(\beta_0)^{\frac{1}{2}}\gamma\right)\\
  &\le \sqrt{n c_{\mathrm{upper}}(\|\beta_0\|_2) } \cdot{\tau}_{|J\backslash
    J_0|} \norm{\gamma}_2.
\end{align*}
Therefore, for $\norm{\gamma}_2\leq \frac{
  c_{\mathrm{lower}}(\|\beta_0\|_2) }{c_{\mathrm{change}}}$, the
inequality~(\ref{eqn:LikelihoodBoundOriginal}) with another application
of~(\ref{Hess1}) gives
\begin{multline*}
\ln(\beta_0+\gamma)-\ln(\beta_0)
\leq \sqrt{n c_{\mathrm{upper}}(\|\beta_0\|_2) } \cdot{\tau}_{|J\backslash
    J_0|} \norm{\gamma}_2 \\
 -\frac{n c_{\mathrm{lower}}(\|\beta_0\|_2) (1 -   c_{\mathrm{lower}}(\|\beta_0\|_2) ^{-1}c_{\mathrm{change}}\cdot \norm{\gamma}_2)}{2}\norm{\gamma}_2^2 \;.
\end{multline*}
In particular, for $\norm{\gamma}_2\leq \frac{
  c_{\mathrm{lower}}(\|\beta_0\|_2) }{2c_{\mathrm{change}}}$, we have 
\begin{multline*}
\ln(\beta_0+\gamma)-\ln(\beta_0)\\
\leq \norm{\gamma}_2\left(\sqrt{n
    c_{\mathrm{upper}}(\|\beta_0\|_2) }\cdot {\tau}_{|J\backslash
    J_0|} -\frac{n c_{\mathrm{lower}}(\|\beta_0\|_2)
  }{4}\norm{\gamma}_2 \right)\;,
\end{multline*}
and so by concavity of the log-likelihood function, for all $\gamma\in\R^J$,
\begin{multline}
  \label{eq:concavity}
  \ln(\beta_0+\gamma)-\ln(\beta_0)
  \leq \norm{\gamma}_2\left(\sqrt{n
      c_{\mathrm{upper}}(\|\beta_0\|_2) }\cdot {\tau}_{|J\backslash
      J_0|} \right.\\
  \left.-\frac{n c_{\mathrm{lower}}(\|\beta_0\|_2)
    }{4}\min\left\{\norm{\gamma}_2,\frac{
        c_{\mathrm{lower}}(\|\beta_0\|_2)
      }{2c_{\mathrm{change}}}\right\} \right)\;.
\end{multline}
Since $\ln(\hat\beta_J)-\ln(\beta_0)\ge 0$, this shows  that
\[
\norm{\wh{\beta}_J-\beta_0}_2\;\leq\; \frac{4\sqrt{
    c_{\mathrm{upper}}(\|\beta_0\|_2) }\cdot {\tau}_{|J\backslash
    J_0|}}{\sqrt{n} c_{\mathrm{lower}}(\|\beta_0\|_2) }\;,
\]
as long as we assume that 
\[
\frac{4\sqrt{ c_{\mathrm{upper}}(\|\beta_0\|_2) }\cdot
  {\tau}_{|J\backslash J_0|}}{\sqrt{n}
  c_{\mathrm{lower}}(\|\beta_0\|_2) } \leq \frac{
  c_{\mathrm{lower}}(\|\beta_0\|_2) }{2c_{\mathrm{change}}}\;.
\]

Taking up~(\ref{eq:score:bound:diff:maxima}), we get
\begin{multline*}
  \ln(\wh{\beta}_J)-\ln(\wh{\beta}_{J_0})\
  \leq \frac{1}{2}\tilde{\tau}_{|J\backslash J_0|}^2 \\
  + n\cdot c_{\mathrm{change}}\left(\frac{c_{\mathrm{upper}}(\|\beta_0\|_2)}{
      c_{\mathrm{lower}}(\|\beta_0\|_2)}\right)^{\frac{3}{2}} 
  \left(\frac{4\sqrt{ c_{\mathrm{upper}}(\|\beta_0\|_2) }\cdot
      {\tau}_{|J\backslash J_0|}}{\sqrt{n}
      c_{\mathrm{lower}}(\|\beta_0\|_2) }\right)^3\;.
\end{multline*}
If
\begin{equation}\label{eqn:n_lower_bd_step}
\sqrt{n} \geq 
\frac{2c_{\mathrm{change}}}{\eps'}\left(\frac{c_{\mathrm{upper}}(\|\beta_0\|_2)}{
      c_{\mathrm{lower}}(\|\beta_0\|_2)}\right)^{\frac{3}{2}} 
      \left(\frac{4\sqrt{ c_{\mathrm{upper}}(\|\beta_0\|_2)}}{  c_{\mathrm{lower}}(\|\beta_0\|_2)}\right)^3
\cdot \frac{\tau_{|J\backslash
  J_0|}^3}{\tilde{\tau}_{|J\backslash J_0|}^2}\;,
  \end{equation}
  then we get
\begin{align}
  \nonumber
  \ln(\wh{\beta}_J)-\ln(\wh{\beta}_{J_0})
  &\leq \frac{1}{2}\tilde{\tau}_{|J\backslash J_0|}^2\cdot (1+\eps')
  \\
  \label{eq:choice:of:epsprime1}
  &=\frac{(1+\eps')}{(1-\eps')^3}\cdot \Big(|J\backslash
  J_0|\log\left(\frac{6p}{\eps'}\right) + \log(4p^{\nu})\Big).
\end{align}
Hence, this inequality holds whenever \eqref{eqn:n_lower_bd_step}
holds.  Now, to determine a simpler lower bound on $n$, we calculate
\[
\frac{\tau_{|J\backslash J_0|}^2}{\tilde{\tau}_{|J\backslash
    J_0|}^2}=\frac{{\frac{2}{(1-\eps')^3}\cdot\left[{|J|\log\left(\frac{3}{\eps'}\right)
        + \log(4p^{\nu})+ |J\backslash J_0|\log(2p)}\right]}}
{{\frac{2}{(1-\eps')^3}\cdot\left[{|J\backslash
        J_0|\log\left(\frac{3}{\eps'}\right) + \log(4p^{\nu})+
        |J\backslash J_0|\log(2p)}\right]}} \leq
{\frac{|J|}{|J\backslash J_0|}}\leq q\;.\] Hence,
\eqref{eqn:n_lower_bd_step} holds as long as $n$ exceeds a constant
multiple of $q^2\tau_{|J\backslash J_0|}^2$.  For $p$ large enough,
which we can ensure by choice of the constant $C_\mathrm{dim}$, we
have that $\tau_{|J\backslash J_0|}^2$ is no larger than a constant
times $q\log(p)$.  So, by choosing the constant
$C_{\mathrm{sample},1}$ appropriately, \eqref{eqn:n_lower_bd_step}
holds as long as
\[n \geq C_{\mathrm{sample},1}\cdot q^3\log(p)\;.\]

 Now fix $\eps'\in(0,\eps)$ such that
\begin{equation}
  \label{eq:choice:of:epsprime2}
  \frac{(1+\eps')^2}{(1-\eps')^3}< 1+\eps.
\end{equation}
Choosing the constant $C_\mathrm{dim}$ to ensure that $p$ is large enough, we
have that 
\[
\frac{|J\backslash J_0|\log\left(\frac{6p}{\eps'}\right) +
  \log(4p^{\nu})}{(|J\backslash J_0|+\nu)\log(p)}\;=\;
1+\frac{|J\backslash J_0|\log\left(\frac{6}{\eps'}\right) +
  \log(4)}{(|J\backslash J_0|+\nu)\log(p)} \;\le\; 1+\eps',
\]
which implies, by~(\ref{eq:choice:of:epsprime1})
and~(\ref{eq:choice:of:epsprime2}), that
\begin{align*}
  \ln(\wh{\beta}_J)-\ln(\wh{\beta}_{J_0})
  &\leq  (1+\eps)(|J\backslash J_0|+\nu)\log(p)\;.
\end{align*}
This proves statement~\ref{thm2:a} of
Theorem~\ref{thm:HessianToLikelihood}.

To show the remaining claim~\ref{thm2:b} of
Theorem~\ref{thm:HessianToLikelihood}, we first note that for any
$J\not\supset J_0$, it holds that
\[
\norm{\wh{\beta}_J-\beta_0}_2\;\geq\;\min_{j\in J_0}|(\beta_0)_j|.
\]
Having assumed that the Hessian conditions hold for true models with
up to $2q$ covariates, we may apply~(\ref{eq:concavity}) to the
model given by $(J\cup J_0)\supseteq J$.  We deduce that
\begin{multline*}
\ln(\wh{\beta}_J)-\ln(\beta_0)\leq 
\min_{j\in J_0}|(\beta_0)_j|\left(\sqrt{n
    c_{\mathrm{upper}}(\|\beta_0\|_2) }\cdot {\tau}_{|J\backslash
    J_0|} \right.
\\
\left. -\frac{n c_{\mathrm{lower}}(\|\beta_0\|_2)
  }{4}\min\left\{\min_{j\in J_0}|(\beta_0)_j|,\frac{
      c_{\mathrm{lower}}(\|\beta_0\|_2)
    }{2c_{\mathrm{change}}}\right\} \right)\;,
\end{multline*}
as long as the term in the parentheses is non-positive.  However, this
can be guaranteed to be the case, by appropriate choice of the
constant $C_{\mathrm{sample},2}$. In particular, for appropriate choice
of $C_{\mathrm{sample},2}$, we get
\begin{multline*}
\ln(\wh{\beta}_J)-\ln(\beta_0)\leq \\
 - \min_{j\in J_0}|(\beta_0)_j| \frac{n c_{\mathrm{lower}}(\|\beta_0\|_2)
  }{8}\min\left\{\min_{j\in J_0}|(\beta_0)_j|,\frac{
      c_{\mathrm{lower}}(\|\beta_0\|_2)
    }{2c_{\mathrm{change}}}\right\}\;.
\end{multline*}
Since  $\min_{j\in J_0}|(\beta_0)_j|$ is also upper bounded by a constant, namely
$\min_{j\in J_0}|(\beta_0)_j|\leq \|\beta_0\|_2\leq a_0$,
this is sufficient to prove claim~\ref{thm2:b} of
Theorem~\ref{thm:HessianToLikelihood}.  

\section{Proof of Hessian conditions (Theorem~\ref{thm:RandomToHessian})}\label{sec:Hessian}

This part of the appendix provides the proof of
Theorem~\ref{thm:RandomToHessian}, according to which the
assumptions~\ref{ass:A1}-\ref{ass:A3} from
Section~\ref{sec:logist-reg} yield a well-behaved Hessian matrix for
the log-likelihood function of all sparse submodels of a logistic
regression model.  The proof is split into three parts.  First, we
address the inequality~(\ref{Hess2}), next the upper bound in
(\ref{Hess1}) and then the lower bound in~(\ref{Hess1}).
In each case we provide an explicit probability for an event that
ensures the desired conclusion.  A union bound over the three cases
implies that all inequalities hold simultaneously with a probability
large enough to conform with the assertion of
Theorem~\ref{thm:RandomToHessian}.

\subsection{Upper bound on change in Hessian}

Define the constant
\begin{equation}
  \label{eq:constant-c}
  c_{\mathrm{change}}= 1+a_2+12\sqrt{2}\, a_3^3.
\end{equation}
We claim that if $n\geq q^3\log(2p)$, then with
probability at least
\begin{equation}
  \label{eq:prob:Hess2}
1-\exp\left\{-\frac{n}{2a_3^6q^3}\right\}\;,
\end{equation}
we have
\[
\sup_{J\subseteq [p], |J|\leq q}\sup_{\beta\neq
  \beta'\in\R^J}\frac{\Norm{H_J(\beta)-H_J(\beta')}}{\norm{\beta-\beta'}_2}\leq
c_{\mathrm{change}}\cdot n\;.
\]

To show this claim, take any set $J$ with $|J|\leq q$, any unit vector
$u\in\R^J$ and any pair of distinct vectors $\beta\neq \beta'\in\R^J$.
 Then we have
\begin{multline*}
\left|u^{\top}\left(H(\beta) - H(\beta')\right) u \right|
\;\leq\;  \sum_{i=1}^n (X_i^{\top}u)^2\cdot
\left|\mathrm{b}''(X_i^{\top}\beta) -
  \mathrm{b}''(X_i^{\top}\beta')\right|\\ 
\;\leq\;  \sum_{i=1}^n (X_i^{\top}u)^2\cdot \left|X_i^{\top}\beta -
  X_i^{\top}\beta'\right|\cdot\max_{t\in[0,1]}\left|\mathrm{b}'''(X_i^{\top}(t\beta+(1-t)\beta')\right|. 
\end{multline*}
Define the unit vector $v =
\frac{\beta-\beta'}{\norm{\beta-\beta'}_2}\in\R^J$.  In logistic
regression, $|\mathrm{b}'''(\theta)|\leq 1$ for all $\theta$.  Using
this fact\footnote{For other exponential families, one could bound the
  $\mathrm{b}'''(\cdot)$ term by taking $q$ to be constant and only
  considering $\beta$ and $\beta'$ of bounded norm.}, we obtain that
\begin{align*}
\left|u^{\top}\left(H(\beta) - H(\beta')\right) u \right| &\leq
  \norm{\beta-\beta'}_2\cdot n \left(\frac{1}{n} \sum_{i=1}^n 
  (X_i^{\top}u)^2\cdot
  \left|X_i^{\top}v\right|\right)\\
&\leq  \norm{\beta-\beta'}_2 \cdot n \left(\frac{1}{n} \sum_{i=1}^n
  |X_i^{\top}u|^3\right)^{\frac{2}{3}}\left(\frac{1}{n}
  \sum_{i=1}^n |X_i^{\top}v|^3\right)^{\frac{1}{3}}\\ 
&\leq  \norm{\beta-\beta'}_2 \cdot n \cdot \left(
  \sup_{\text{$q$-sparse unit $w$}} \frac{1}{n}\sum_{i=1}^n
  |X_i^{\top}w|^3\right)\;. 
\end{align*}
Applying Corollary \ref{cor:MomentsSums} for exponent $k=3$, we find
that with at least the claimed probability from~(\ref{eq:prob:Hess2}),
\[\sup_{\text{$q$-sparse unit $w$}} \frac{1}{n}\sum_{i=1}^n |X_i^{\top}w|^3
\; \leq\;
 1+a_2+12\sqrt{2}\, a_3^3 = c_{\mathrm{change}}\;,\]
as long as $n\geq q^3\log(2p)$.  
Since $H(\beta)-H(\beta')$ is symmetric, this implies that
\begin{equation*}
  \frac{\Norm{H_J(\beta)-H_J(\beta')}}{\norm{\beta-\beta'}_2}
  \;\le\; \sup_{\text{$q$-sparse unit
      $u$}}\frac{\left|u^{\top}(H(\beta)-H(\beta'))u\right|}{\norm{\beta-\beta'}_2}
  \;\leq\;
  c_{\mathrm{change}} \cdot n
\end{equation*}
for all sets $J$ of cardinality $|J|\leq q$ and all
$\beta\neq\beta'\in\R^J$, as claimed.

\subsection{Upper bound on Hessian}

In this subsection, we prove that if inequality~\eqref{Hess2} holds,
then with probability at least 
\begin{equation}
  \label{eq:prob:Hess1:upper}
1-\exp\left\{-\frac{n}{2a_3^4q^2}\right\} ,
\end{equation}
it also holds that 
\[H_J(\beta)\preceq n\cdot
c_{\mathrm{upper}}(\norm{\beta}_2)\cdot\mathbf{I}_J\;\] for all
$J$ with $|J|\leq q$ and all $\beta\in\R^J$.  Here, we
define
\[c_{\mathrm{upper}}(r)\coloneqq\mathrm{b}''(0)\cdot\left( 1 + a_2 +
  8\sqrt{2}\,a_3^2\right)+ c_{\mathrm{change}}\cdot r\;\] where
$c_{\mathrm{change}}$ is the constant from~(\ref{eq:constant-c}) and
$\mathrm{b}''(0)=1/4$ for logistic regression.  The idea for our proof
is to show that, on a suitable event, $\sup_{|J|\leq
  q}\Norm{H_J(0)}=\O{n}$.  Then, combined with the bounded change
condition \eqref{Hess2}, we will be able to bound $\Norm{H_J(\beta)}$
for any $\beta\in\R^J$.

First, for any $q$-sparse unit $u$, we have
\[\EE{(X^{\top}u)^2}\leq \EE{|X^{\top}u|^3}^{\frac{2}{3}}\leq a_2^{\frac{2}{3}}\;.\]
Then we have, with at least the probability in~(\ref{eq:prob:Hess1:upper}),
\begin{align*}
  \sup_{|J|\leq q}\Norm{H_J(0)}
  &=\sup_{|J|\leq q}\Norm{\sum_{i=1}^n
    X_{iJ}X_{iJ}^{\top}\mathrm{b}''(X_i^{\top}0)}\\
  &= \mathrm{b}''(0)\cdot  \sup_{|J|\leq q}\Norm{\sum_{i=1}^n
    X_{iJ}X_{iJ}^{\top}}\\
  &=\mathrm{b}''(0)\cdot\sup_{|J|\leq q,\text{ unit }u\in\R^J}\sum_{i=1}^n (X_i^{\top}u)^2\\
  & \leq \mathrm{b}''(0)\cdot  n\left( 1 + a_2 +  8\sqrt{2}\,a_3^2\right)\;,
\end{align*}
where for the last step we apply Corollary \ref{cor:MomentsSums} with
$k=2$, using the assumption that $n\geq q^2\log(2p)$.  The
bounded change condition from \eqref{Hess2} now implies the desired
conclusion, namely, that for all $J$ with $|J|\leq q$ and
all $\beta\in\R^J$,
\[
\norm{H_J(\beta)}\leq \norm{H_J(0)}+
\norm{H_J(0)-H_J(\beta)}\leq n\cdot c_{\mathrm{upper}}(\norm{\beta}_2)\;.
\]

\subsection{Lower bound on Hessian}

Finally, we prove that with probability at least 
\begin{equation}
  \label{eq:prob:Hess1:lower}
  1-2\exp\left\{-\frac{n}{2}\cdot
    \left(\frac{a_1^3}{512a_2^2}\right)^2\right\},
\end{equation}
it holds for all $|J|\leq q$, for all $|J|\leq q$ and for all $\beta\in\R^J$ that
\[
H_J(\beta)\succeq n\cdot c_{\mathrm{lower}}(\norm{\beta}_2)\cdot\mathbf{I}_J\;,
\]
where
\begin{align*}
  c_{\mathrm{lower}}(r) &\coloneqq   \frac{ a_1^4}{2048 a_2^2}\cdot
  \min_{|\theta|\leq r\cdot 2\sqrt[3]{256}
  a_2/a_1}\mathrm{b}''(\theta)\\ 
  &= \frac{ a_1^4}{2048 a_2^2}\cdot \frac{\exp\{ r\cdot 2\sqrt[3]{256}
    a_2/a_1\}}{\left(1+\exp\{r\cdot 2\sqrt[3]{256} a_2/a_1
      \}\right)^2}
\end{align*}
for the case of logistic regression; recall~(\ref{eq:b':b''}).  We
show this for triples $(n,p,q)$ that have $n$ larger than
the product of $q\log(2p)$ and a constant that is determined
through~(\ref{eq:other-sample-size-requirement}) below.

For a proof, since 
\[
H_J(\beta)=\sum_{i=1}^n X_{iJ}X_{iJ}^{\top}\,\mathrm{b}''(X_i^{\top}\beta),
\]
we consider the quantity 
\[
\sum_{i=1}^n (X_i^{\top}u)^2\mathrm{b}''(X_i^{\top}\beta)
\]
 where $u\in\R^J$ is a unit vector.   For any choice of $w_1,w_2\geq 0$, we have
\begin{equation*}
  \sum_{i=1}^n (X_i^{\top}u)^2\mathrm{b}''(X_i^{\top}\beta)\; \geq\;
  \sum_{i=1}^n  w_1^2\, \min_{|\theta|\leq \norm{\beta}_2 w_2
  }\mathrm{b}''(\theta)\cdot \one{\left\{|X_i^{\top}u|\geq w_1,
  |X_i^{\top}\beta|\leq \norm{\beta}_2  w_2\right\}}.
\end{equation*}
Using the symmetry and monotonicity of $\mathrm{b}''$ for logistic
regression we find 
\begin{multline}
  \label{StartLowerBound}
  \sum_{i=1}^n (X_i^{\top}u)^2\mathrm{b}''(X_i^{\top}\beta) \; \geq\;
  n\cdot w_1^2\,\mathrm{b}''(\norm{\beta}_2  w_2) 
  \\
  \times    
\left(1 -  \frac{\#\{i:|X_i^{\top}u|<w_1\}}{n} -
  \frac{\#\left\{i:|X_i^{\top}\beta|/\norm{\beta}_2>w_2\right\}}{n}\right)\;. 
\end{multline}
We now show how to choose $w_1$ and $w_2$ such that the two relative
frequencies are sufficiently small, with high probability, for any
choice of $u$ and $\beta$.

By Lemma \ref{IntervalLemma}, for any $t>0$, with probability at least
$1-2e^{-nt^2/2}$, for all $q$-sparse unit vectors $u$,
\begin{equation}
\label{aProb}
\frac{\#\{i:|X_i^{\top}u|<w_1\}}{n}\;\leq\;
\pr\left\{|X_1^{\top}u|<w_1 + \frac{1}{t}
  \sqrt{\frac{32a_3^2q\log(2p)}{n}}\right\} + 2t 
\end{equation}
and
\begin{equation}
\label{bProb}
\frac{\#\{i:|X_i^{\top}u|>w_2\}}{n}\;\leq\;
\pr\left\{|X_1^{\top}u|>w_2 - \frac{1}{t}
  \sqrt{\frac{32a_3^2q\log(2p)}{n}}\right\} + 2t\;. 
\end{equation}
Now set 
\begin{align*}
  w_1&=\sqrt{\frac{ a_1}{8}}, &
  w_2&=\frac{2\sqrt[3]{256} a_2}{ a_1}, &
  t&=\frac{ a_1^3}{512 a_2^2},
\end{align*}
and assume 
\begin{equation}
  \label{eq:other-sample-size-requirement}
  \frac{1}{t}\sqrt{\frac{32a_3^2q\log(2p)}{n}} \;\leq\;
  \min\left\{\sqrt{\frac{ a_1}{8}}, \frac{\sqrt[3]{256} a_2}{
      a_1}\right\}\;. 
\end{equation}
Then, for shorter notation, define the two scalars
\begin{align*}
  w_1' &= w_1+\frac{1}{t}\sqrt{\frac{32a_3^2q\log(2p)}{n}}, &
  w_2' = w_2 - \frac{1}{t}\sqrt{\frac{32a_3^2q\log(2p)}{n}}\;.
\end{align*}

Assume now that \eqref{aProb} and \eqref{bProb} hold.
We begin by simplifying the bound in  \eqref{aProb}.
By~(\ref{eq:other-sample-size-requirement}), $w_1'{}^2\leq
\frac{a_1}{2}$, and so applying Lemma \ref{AwayFromZero} with
$Z=(X_1^{\top} u)^2$, $h(Z)=\sqrt{Z}$ and $a=w_1'{}^2$ yields that for
all $q$-sparse unit vectors $u$,
\begin{align*}
  \pr\left\{|X_1^{\top}u|<w_1'\right\}  
  &\leq 1 - \frac{\EE{(X_1^\top u)^2} - w_1'{}^2}{\inf\left\{x\geq 0 : \sqrt{x}>\frac{\EE{|X_1^\top u|^3}}{\EE{(X_1^\top u)^2}-w_1'{}^2}\right\}}\\
  &\leq 1 - \frac{ a_1-w_1'{}^2}{\inf\left\{x\geq 0 : \sqrt{x}>\frac{\EE{|X_1^\top u|^3}}{\EE{(X_1^\top u)^2}-w_1'{}^2}\right\}}.
\end{align*}
The term involving the supremum satisfies
\begin{equation*}
  \frac{ a_1-w_1'{}^2}{\inf\left\{x\geq 0:\sqrt{x}>\frac{ a_2}{ a_1-w_1'{}^2}\right\}} 
  =  \frac{ a_1-w_1'{}^2}{\left(\frac{ a_2}{ a_1-w_1'{}^2}\right)^2} 
  =  \frac{( a_1-w_1'{}^2)^3}{8 a_2^2} \geq \frac{ a_1^3}{64
  a_2^2}\,,
\end{equation*}
and so
\[
\sup_{\text{$q$-sparse unit $u$}}  \frac{\#\{i:|X_i^{\top}u|<w_1\}}{n}\;\leq\;
\left(1- \frac{ a_1^3}{64 a_2^2}\right) + 2t 
\;\leq\;  1- \frac{3
    a_1^3}{256 a_2^2}\;.
\]
Next, we simplify the  bound in \eqref{bProb}.
By~(\ref{eq:other-sample-size-requirement}), for any $u$,
\begin{equation*}
  \pr\left\{|X_1^{\top}u|>w_2'\right\}
\leq \pr\left\{|X_1^{\top}u|>\frac{\sqrt[3]{256} a_2}{ a_1}\right\}
= \pr\left\{|X_1^{\top}u|^3>\frac{256 a_2^3}{ a_1^3}\right\}.
\end{equation*}
By Markov's inequality,
\begin{equation*}
  \pr\left\{|X_1^{\top}u|>w_2'\right\}
\leq\frac{\EE{|X_1^{\top}u|^3}}{{256 a_2^3}/{ a_1^3}}
\leq\frac{ a_2}{{256 a_2^3}/{ a_1^3}}
= \frac{ a_1^3}{256 a_2^2}\;.
\end{equation*}
We obtain that
\[
\sup_{\text{$q$-sparse unit $u$}} \frac{\#\{i:|X_i^{\top}u|>w_2\}}{n} \;\leq\;
 \frac{ a_1^3}{256 a_2^2}+2t \;\leq\;  \frac{ a_1^3}{128
  a_2^2}\;.
\]

Returning to \eqref{StartLowerBound}, we conclude that, with at least
the probability from~(\ref{eq:prob:Hess1:lower}), for all $|J|\leq q$
and all unit $u\in\R^J$ and all $\beta\in\R^J$, 
\begin{align*}
\sum_{i=1}^n (X_i^{\top}u)^2\,\mathrm{b}''(X_i^{\top}\beta) &\geq
n\cdot w_1^2 \mathrm{b}''(\norm{\beta}_2 w_2 )\cdot \left[1 -  \left(1- \frac{3
    a_1^3}{256 a_2^2} \right) - \frac{ a_1^3}{128
  a_2^2}\}\right]\\
&\geq n\cdot\frac{ a_1}{8}\cdot \frac{ a_1^3}{256
  a_2^2}\, \mathrm{b}''\left(\norm{\beta}_2\cdot
  \frac{2\sqrt[3]{256} a_2}{ a_1}\right)\\
&= n\cdot c_{\mathrm{lower}}(\norm{\beta}_2)\;.
\end{align*}

\section{Technical lemmas}
\label{sec:technical-lemmas}

This section of the appendix provides the lemmas that were used in
previous parts of the paper to control the behavior of sparse linear
combinations of the covariates.

\subsection{Concentration bound and subgaussian maxima}

The lemmas we establish subsequently make use of the following general
concentration bound.

\begin{lemma}
  \label{lem:kolt}
  Let $X,X_1,\dots,X_n$ be i.i.d.~random variables drawn from a set
  $\mathcal{X}$, and let $\mathcal{F}$ be a class of functions
  $f:\mathcal{X}\rightarrow\R$.  Consider an $L$-Lipschitz function
  $g:\R\rightarrow\R$ with $g(0)=0$ and $|g(f(X))|\leq M$ almost
  surely.  Then, for any $t\geq 0$, with probability at least
  $1-e^{-t^2/2}$,
  \begin{multline*}
    \label{ConcKolt}
    \sup_{f\in\mathcal{F}}\left|\sum_{i=1}^n\Big( g(f(X_i)) -
        \Ep{X}{g(f(X))}\Big) \right|\leq\\
    4L\,\Ep{\nu_1,\dots,\nu_n,X_1,\dots,X_n}{\sup_{f\in\mathcal{F}}\left|\sum_{i=1}^n
        \nu_i f(X_i)\right|} + t\cdot M\sqrt{n}\;,
  \end{multline*} 
where $\nu_1,\dots,\nu_n\in\{\pm 1\}$ are independent Rademacher
random variables that are also independent of $X_1,\dots,X_n$.
\end{lemma}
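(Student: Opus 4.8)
The plan is to combine three standard ingredients: McDiarmid's bounded differences inequality to concentrate the supremum around its mean, a symmetrization step to pass to a Rademacher average, and the Ledoux--Talagrand contraction principle to strip off the Lipschitz map $g$.

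First I would write
\[
F(x_1,\dots,x_n)=\sup_{f\in\mathcal{F}}\left|\sum_{i=1}^n\Big(g(f(x_i))-\Ep{X}{g(f(X))}\Big)\right|,
\]
so that the left-hand side of the lemma is $F(X_1,\dots,X_n)$. Since $|g(f(x))|\le M$ almost surely, changing a single argument $x_i$ alters the corresponding summand $g(f(x_i))$ by at most $2M$, hence alters $F$ by at most $2M$. McDiarmid's inequality then gives, for every $s\ge 0$,
\[
\PP{F(X_1,\dots,X_n)\ge\EE{F(X_1,\dots,X_n)}+s}\;\le\;\exp\left\{-\frac{s^2}{2nM^2}\right\},
\]
and choosing $s=tM\sqrt{n}$ yields exactly the claimed failure probability $e^{-t^2/2}$. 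It therefore remains to show $\EE{F(X_1,\dots,X_n)}\le 4L\cdot\Ep{\nu_1,\dots,\nu_n,X_1,\dots,X_n}{\sup_{f\in\mathcal{F}}\big|\sum_{i=1}^n\nu_i f(X_i)\big|}$.

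For that bound I would first symmetrize: introducing an independent copy $X_1',\dots,X_n'$ of the sample and independent Rademacher signs $\nu_1,\dots,\nu_n$, the standard argument gives
\[
\EE{F(X_1,\dots,X_n)}\;\le\;2\,\Ep{\nu_1,\dots,\nu_n,X_1,\dots,X_n}{\sup_{f\in\mathcal{F}}\left|\sum_{i=1}^n\nu_i\,g(f(X_i))\right|}.
\]
Conditioning on $X_1,\dots,X_n$, I would then apply the contraction principle to the common $L$-Lipschitz function $g$, which satisfies $g(0)=0$; in the form valid for an absolute value inside the supremum this yields
\[
\Ep{\nu_1,\dots,\nu_n}{\sup_{f\in\mathcal{F}}\left|\sum_{i=1}^n\nu_i\,g(f(X_i))\right|}\;\le\;2L\,\Ep{\nu_1,\dots,\nu_n}{\sup_{f\in\mathcal{F}}\left|\sum_{i=1}^n\nu_i\,f(X_i)\right|}.
\]
Taking expectations over $X_1,\dots,X_n$ and combining the two displays gives the desired bound on $\EE{F}$, and substituting into the McDiarmid inequality completes the proof.

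The step that needs the most care is the contraction inequality with the absolute value inside the supremum, since the classical Ledoux--Talagrand comparison theorem is usually stated for suprema without an absolute value. The standard fix is to remove the absolute value (for instance by enlarging $\mathcal{F}$ to include $-f$, or by a short symmetrization of the signs), apply the comparison theorem in its plain form using only that $g$ is $L$-Lipschitz with $g(0)=0$, and absorb the resulting extra factor of $2$. Keeping track of the two factors of $2$---one from symmetrization and one from the absolute-value version of contraction---is precisely what produces the constant $4L$ in the statement.
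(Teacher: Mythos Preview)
Your proof is correct and follows exactly the approach the paper uses: the paper's own proof is a one-sentence citation of Theorems~2.5, 2.1, and~2.3 in Koltchinskii (2011), which are precisely the bounded differences, symmetrization, and contraction results you invoke in that order. Your careful accounting of the constant $4L$ (one factor of $2$ from symmetrization and one factor of $2L$ from the absolute-value form of the contraction principle) matches the statement.
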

\begin{proof}
  The claim follows by combining known bounded difference, symmetrization,
  and contraction results.  Specifically, it is a consequence of
  Theorems 2.5, 2.1, and 2.3 in \cite{koltchinskii:2011}.
\end{proof}

The next lemma states a well-known property of subgaussian random
variables \cite[Prop.~3.1]{koltchinskii:2011}. 
Recall that a random variable $Z$ is $\sigma$-subgaussian if, for all
$t\in\mathbb{R}$,
\[
\EE{e^{t Z}}\leq e^{t^2\sigma^2/2}.
\]

\begin{lemma}
  \label{lem:max:subgauss}
  Suppose $Z_1,\dots,Z_m$ are, not necessarily independent, random
  variables with a common distribution that is $\sigma$-subgaussian
  for $\sigma>0$.  Then  
  \[
  \EE{\max_{1\le i\le m} |Z_i|} \;\le\; \sigma \cdot \sqrt{2\log(2m)}.
  \]  
\end{lemma}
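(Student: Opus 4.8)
The plan is to use the standard exponential-moment (Chernoff-type) argument together with Jensen's inequality. First I would fix a free parameter $t>0$ and apply Jensen's inequality to the convex function $x\mapsto e^{tx}$, which gives
\[
\exp\!\big(t\,\EE{\max_{i\le m}|Z_i|}\big)\;\le\;\EE{\exp\!\big(t\max_{i\le m}|Z_i|\big)}\;=\;\EE{\max_{i\le m}e^{t|Z_i|}}\;\le\;\sum_{i=1}^m\EE{e^{t|Z_i|}},
\]
where the last inequality merely bounds the maximum of nonnegative quantities by their sum; note that independence of the $Z_i$ is \emph{not} used anywhere in this step.

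Next I would control each term $\EE{e^{t|Z_i|}}$ using the subgaussian hypothesis. Since $e^{t|Z|}\le e^{tZ}+e^{-tZ}$ and the subgaussian bound $\EE{e^{sZ}}\le e^{s^2\sigma^2/2}$ holds for \emph{all} $s\in\R$ (in particular for $s=t$ and $s=-t$), one obtains $\EE{e^{t|Z_i|}}\le 2e^{t^2\sigma^2/2}$ for each $i$, and therefore
\[
\EE{\max_{i\le m}|Z_i|}\;\le\;\frac{1}{t}\log\!\big(2m\,e^{t^2\sigma^2/2}\big)\;=\;\frac{\log(2m)}{t}+\frac{t\sigma^2}{2}.
\]

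Finally I would optimize over $t>0$. The right-hand side above is minimized at $t=\sqrt{2\log(2m)}/\sigma$, which is a legitimate positive choice because $m\ge 1$ ensures $\log(2m)>0$. Substituting this value makes the two terms equal, each equal to $\sigma\sqrt{\tfrac12\log(2m)}$, and their sum is exactly $\sigma\sqrt{2\log(2m)}$, which is the claimed bound.

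I do not anticipate any real obstacle; the argument is entirely routine. The only points deserving a word of care are that the union-type step needs the positivity of $e^{t|Z_i|}$ (so that the maximum is dominated by the sum), in lieu of any independence assumption, and that one must invoke the two-sided form of the subgaussian inequality in order to handle $|Z_i|$ through $e^{tZ_i}+e^{-tZ_i}$. Alternatively, one could simply cite Proposition~3.1 of \cite{koltchinskii:2011}, of which this lemma is a special case.
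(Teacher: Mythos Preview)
Your proof is correct. The paper itself does not give a proof of this lemma; it simply states the result and cites Proposition~3.1 of \cite{koltchinskii:2011}. Your argument is precisely the standard Chernoff/Jensen proof underlying that proposition, so your proposal is fully in line with the paper's intent---and you even anticipate the same reference.
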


\subsection{Sparse unit linear combinations falling in an interval}

We return to the setting where $X,X_1,\dots,X_n$ are i.i.d.~random
vectors in $\R^p$ and satisfy assumptions~\ref{ass:A1}-\ref{ass:A3}.

\begin{lemma}\label{IntervalLemma}
Fix any $a>0$ and $t>0$. With probability at least $1-e^{-t^2n/2}$,
for all $q$-sparse unit vectors $u$,
\[
\frac{1}{n}\#\{i:|X_i^{\top}u|<a\}\;\leq\; \pr\left\{|X^{\top}u|<a +
  \frac{1}{t} \sqrt{\frac{32a_3^2q\log(2p)}{n}}\right\} + 2t\;. 
\]
Similarly, with probability at least $1-e^{-t^2n/2}$, for all
$q$-sparse unit vectors $u$,
\[
\frac{1}{n}\#\{i:|X_i^{\top}u|>a\}\;\leq\; \pr\left\{|X^{\top}u|>a -
  \frac{1}{t} \sqrt{\frac{32a_3^2q\log(2p)}{n}}\right\} + 2t\;. 
\] 
\end{lemma}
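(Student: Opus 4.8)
The plan is to replace the non-Lipschitz indicators by piecewise-linear surrogates and then invoke the concentration bound of Lemma~\ref{lem:kolt}. Fix $t>0$ and set $\eta:=\frac{1}{t}\sqrt{\frac{32a_3^2q\log(2p)}{n}}$, the shift that is to appear in the claimed bound. For the first inequality I would take $g:\R\to[0,1]$ piecewise-linear with $g(x)=1$ for $|x|\le a$, $g(x)=0$ for $|x|\ge a+\eta$, and linear on $[a,a+\eta]$ and $[-a-\eta,-a]$; then $g$ is $\frac1\eta$-Lipschitz and $\one{\{|x|<a\}}\le g(x)\le\one{\{|x|<a+\eta\}}$ for all $x$. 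Since $g(0)=1$, I would apply Lemma~\ref{lem:kolt} to $g-g(0)$ (which has the same centered empirical sum and satisfies $|g-g(0)|\le1$, so $M=1$), with $L=\frac1\eta$, function class $\mathcal F=\{x\mapsto x^\top u:\ u\text{ is }q\text{-sparse},\ \norm{u}_2=1\}$, and concentration parameter $t\sqrt n$. This gives, with probability at least $1-e^{-t^2n/2}$,
\begin{multline*}
\sup_{u}\left|\frac1n\sum_{i=1}^n g(X_i^\top u)-\EE{g(X^\top u)}\right|\\
\le\;\frac{4}{\eta n}\,\EE{\sup_u\Big|\sum_{i=1}^n\nu_i X_i^\top u\Big|}\;+\;t.
\end{multline*}

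The next step is to bound the Rademacher complexity term. For a $q$-sparse unit vector $u$ one has $\norm{u}_1\le\sqrt q$, so $\big|\sum_i\nu_iX_i^\top u\big|\le\sqrt q\max_{j\in[p]}\big|\sum_i\nu_iX_{ij}\big|$. Conditionally on $X_1,\dots,X_n$, each $\sum_i\nu_iX_{ij}$ is a sum of $n$ independent symmetric variables bounded in absolute value by $a_3$ (by~\ref{ass:A3}), hence $a_3\sqrt n$-subgaussian (by Hoeffding's lemma); Lemma~\ref{lem:max:subgauss} then gives $\EE{\max_j|\sum_i\nu_iX_{ij}|\mid X}\le a_3\sqrt n\sqrt{2\log(2p)}$, and taking expectations yields $\EE{\sup_u|\sum_i\nu_iX_i^\top u|}\le a_3\sqrt{2nq\log(2p)}$. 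Since $\eta=\frac{4\sqrt2\,a_3}{t}\sqrt{q\log(2p)/n}$, a short computation gives $\frac{4}{\eta n}\cdot a_3\sqrt{2nq\log(2p)}=t$, so the right-hand side above equals $2t$. Combining this with the sandwich for $g$ yields, simultaneously for all $q$-sparse unit vectors $u$,
\begin{multline*}
\frac1n\#\{i:|X_i^\top u|<a\}\;\le\;\frac1n\sum_{i=1}^ng(X_i^\top u)\\
\le\;\EE{g(X^\top u)}+2t\;\le\;\pr\left\{|X^\top u|<a+\eta\right\}+2t,
\end{multline*}
which is the first assertion.

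The second assertion follows in exactly the same way after replacing $g$ by the piecewise-linear function that is $0$ on $\{|x|\le a-\eta\}$ and $1$ on $\{|x|\ge a\}$, so that $\one{\{|x|>a\}}\le g(x)\le\one{\{|x|>a-\eta\}}$; this $g$ is again $\frac1\eta$-Lipschitz with $|g-g(0)|\le1$, and every estimate above carries over verbatim (if $a\le\eta$ the asserted inequality holds trivially, since its right-hand side is then at least $1$). I expect the only real obstacle to be the bookkeeping around the smoothing parameter: one must choose $\eta$ so that the Rademacher term $\frac{4L}{n}\EE{\cdots}$ and the sub-Gaussian deviation term each contribute $t$, for a total of $2t$, while at the same time the bias introduced by smoothing is absorbed precisely into the shift of $a$ inside the probability. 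Once $\eta$ is pinned down, the complexity estimate itself is routine, reducing via $\ell_1/\ell_\infty$ duality to the maximum of $p$ sub-Gaussian sums, which is handled by Lemma~\ref{lem:max:subgauss}.
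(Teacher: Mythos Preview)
Your proof is correct and follows essentially the same approach as the paper: smooth the indicator by a piecewise-linear surrogate, apply Lemma~\ref{lem:kolt}, and bound the Rademacher complexity via $\ell_1/\ell_\infty$ duality and Lemma~\ref{lem:max:subgauss}. Your treatment is in fact slightly more careful than the paper's, since you explicitly shift $g$ by $g(0)$ to meet the hypothesis $g(0)=0$ of Lemma~\ref{lem:kolt} and you handle the trivial case $a\le\eta$ in the second statement.
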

\begin{proof}
The proofs of the two statements are essentially identical, so we
prove only the first one. Let
\[
c\coloneqq t^{-1}
\sqrt{\frac{32a_3^2q\log(2p)}{n}},
\]
 and define the piece-wise linear function
\[
g(z)=
\begin{cases}
  1& \text{if } |z|\leq a,\\
  0& \text{if } |z|\geq a+c,\\
  (a+c-|z|)/c& \text{if } a\leq|z|\leq a+c.
\end{cases}
\]
Then $g$ is $1/c$-Lipschitz, has values in $[0,1]$, and satisfies
\[
\one{\{|z|<a\}}\leq g(z)\leq \one{\{|z|< a+c\}}.
\]
By the concentration bound from Lemma~\ref{lem:kolt},
with probability at least $1-e^{-t^2n/2}$,
\begin{multline}
  \label{gEvent}
  \sup_{\text{$q$-sparse unit $u$}}  \left|\sum_{i=1}^n \Big(g(X_i^{\top}u
    ) - \EE{g(X^{\top}u )}\Big)\right| \leq \\
  4c^{-1}\Ep{\nu,X}{\sup_{\text{$q$-sparse unit $u$}}
    \left|\sum_{i=1}^n \nu_i X_i^{\top}u \right|}+ nt\;.
\end{multline} 
Assume from now on that this event occurs.  

We may bound the expectation appearing in~(\ref{gEvent}) as
\begin{multline}
 \label{eq:bound:exp-sup}
  \Ep{\nu,X}{\sup_{\text{$q$-sparse unit $u$}} \left|\sum_{i=1}^n
      \nu_i X_i^{\top}u \right|}
  \leq \Ep{\nu,X}{\sup_{|J|\leq q}\Norm{\sum_{i=1}^n \nu_i
      X_{iJ}}_2}\\
  \leq \sqrt{q}\,\Ep{\nu,X}{\sup_{1\le j\le p} \left|\sum_{i=1}^n \nu_i
      X_{ij}\right|}.
\end{multline}
By assumption~\ref{ass:A3}, $|X_{ij}|\leq a_3$ for all $i,j$, and therefore
$\sum_{i=1}^n \nu_i X_{ij}$ is $(a_3\sqrt{n})$-subgaussian
because, by independence of the $X_i$'s,
\begin{multline*}
\EE{e^{t\sum_{i=1}^n \nu_iX_{ij}}}=\prod_{i=1}^n \EE{e^{t \nu_iX_{ij}}}\leq \prod_{i=1}^n \EE{\EE{e^{t \nu_iX_{ij}}\mid X_{ij}}}\\
=\prod_{i=1}^n \frac{1}{2}\EE{e^{t X_{ij}} + e^{-t X_{ij}}} \leq \prod_{i=1}^n \EE{e^{t^2X_{ij}^2/2}}
\leq e^{nt^2a_3^2/2}.
\end{multline*}
Applying Lemma~\ref{lem:max:subgauss} 
to~\eqref{eq:bound:exp-sup}, we obtain that
\begin{equation}
  \label{eq:bound:exp-sup-subgauss}
  \Ep{\nu,X}{\sup_{\text{$q$-sparse unit $u$}} \left|\sum_{i=1}^n
      \nu_i X_i^{\top}u \right|}\;\leq\;\sqrt{q}\cdot a_3\sqrt{n}\cdot\sqrt{2\log(2p)}\;.
\end{equation}
We deduce that
\begin{multline*}
\sup_{\text{$q$-sparse unit $u$}}  \left|\sum_{i=1}^n g(X_i^{\top}u ) - \EE{g(X^{\top}u )}\right| \\
        \;\leq \;
         4c^{-1}\sqrt{q}\cdot a_3\sqrt{n}\cdot\sqrt{2\log(2p)}+ nt\;=\;2nt\;,
 \end{multline*}        
 by our choice of $c$.  Returning to \eqref{gEvent}, we have shown
 that, as desired,
\begin{multline*}
\sup_{\text{$q$-sparse unit $u$}}\#\{i:|X_i^{\top}u|<a\}\;\leq\; \sup_{\text{$q$-sparse unit $u$}} \sum_{i=1}^n g(X_i^{\top}u) 
\;\leq\;\\
 n\EE{g(X^{\top}u)} + 2nt \;\leq\; n\pr\{|X^{\top}u|<a + c\}+2nt\;.
\end{multline*}
\end{proof}

\subsection{Bounding functions of sparse unit linear combinations}

\begin{lemma}\label{lem:ExpectationsToSums}
  Suppose $f:[0,\infty)\rightarrow[0,\infty)$ is a nondecreasing
  function with
  \[
    f(z)\le M,\quad  |f(z)-f(z')|\le L |z-z'|,
\]
  for all\ $0\le z,z'\le a_3\sqrt{q}$.  If
  $\EE{f(|X^{\top}u|)}\leq a_2$ for all $q$-sparse unit vectors $u$,
  then under assumption \ref{ass:A3},
   it holds with probability at least $1-e^{-n/(2M^2)}$ that
  \[
  \sup_{\text{$q$-sparse unit $u$}} \sum_{i=1}^n   f(|X_i^{\top}u|)
  \;\leq\; n\left( 1 + a_2 +  4L a_3\sqrt{\frac{2\log(2p)}{n}}\right)\;.
  \]
\end{lemma}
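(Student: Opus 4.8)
The plan is to deduce the bound from the concentration inequality of Lemma~\ref{lem:kolt}, applied to the function class $\mathcal F=\{x\mapsto x^\top u:u\in\R^p\text{ a $q$-sparse unit vector}\}$ together with a centered and clamped version of $z\mapsto f(|z|)$. The observation that makes this work is a deterministic estimate: if $u$ is a $q$-sparse unit vector supported on $J$ with $|J|\le q$, then Cauchy--Schwarz and assumption~\ref{ass:A3} give $|X_i^\top u|=|X_{iJ}^\top u|\le\norm{X_{iJ}}_2\le a_3\sqrt q$, and likewise $|X^\top u|\le a_3\sqrt q$ almost surely, so the Lipschitz and boundedness hypotheses on $f$ are in force on the entire range of the linear combinations of interest. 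Accordingly I would set
\[
g(z)\;=\;f\!\big(\min\{|z|,\,a_3\sqrt q\}\big)-f(0),
\]
which is globally $L$-Lipschitz (a composition of $1$-Lipschitz maps with $f|_{[0,a_3\sqrt q]}$), even, has $g(0)=0$ and, using monotonicity of $f$, satisfies $0\le g\le M$; moreover it agrees with $z\mapsto f(|z|)-f(0)$ on $[-a_3\sqrt q,a_3\sqrt q]$. Hence $g(X_i^\top u)=f(|X_i^\top u|)-f(0)$ for all $i$ and all such $u$, and $\EE{g(X^\top u)}=\EE{f(|X^\top u|)}-f(0)\le a_2-f(0)$.

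Next I would invoke Lemma~\ref{lem:kolt} with this $g$ and $\mathcal F$, choosing the Rademacher parameter $t=\sqrt n/M$: this is precisely the value for which the exceptional probability is $e^{-t^2/2}=e^{-n/(2M^2)}$ and the deterministic tail term is $tM\sqrt n=n$. It yields, with probability at least $1-e^{-n/(2M^2)}$,
\begin{multline*}
\sup_{\text{$q$-sparse unit }u}\Big|\sum_{i=1}^n\big(g(X_i^\top u)-\EE{g(X^\top u)}\big)\Big|\\
\le\;4L\,\Ep{\nu,X}{\sup_{\text{$q$-sparse unit }u}\Big|\sum_{i=1}^n\nu_i X_i^\top u\Big|}+n .
\end{multline*}
The Rademacher expectation is handled exactly as in the proof of Lemma~\ref{IntervalLemma}: one writes $\sup_u|\sum_i\nu_i X_i^\top u|=\sup_{|J|\le q}\Norm{\sum_i\nu_i X_{iJ}}_2\le\sqrt q\,\max_{1\le j\le p}|\sum_i\nu_i X_{ij}|$, uses that by~\ref{ass:A3} each $\sum_i\nu_i X_{ij}$ is $(a_3\sqrt n)$-subgaussian, and applies Lemma~\ref{lem:max:subgauss} (compare \eqref{eq:bound:exp-sup}--\eqref{eq:bound:exp-sup-subgauss}), obtaining $\Ep{\nu,X}{\sup_u|\sum_i\nu_i X_i^\top u|}\le\sqrt q\,a_3\sqrt n\,\sqrt{2\log(2p)}$.

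Finally I would collect the pieces: on the event above, for every $q$-sparse unit $u$,
\begin{multline*}
\sum_{i=1}^n f(|X_i^\top u|)\;=\;n f(0)+\sum_{i=1}^n g(X_i^\top u)\\
\le\;n f(0)+n\,\EE{g(X^\top u)}+4L\sqrt q\,a_3\sqrt n\,\sqrt{2\log(2p)}+n,
\end{multline*}
and since $n f(0)+n\,\EE{g(X^\top u)}\le n f(0)+n(a_2-f(0))=n a_2$, taking the supremum over $u$ gives $\sup_u\sum_i f(|X_i^\top u|)\le n\big(1+a_2+4L a_3\sqrt{2q\log(2p)/n}\big)$ --- that is, the assertion of Lemma~\ref{lem:ExpectationsToSums} (the factor $\sqrt q$ arising from the passage to $q$-sparse maxima is harmless, since in every use of the lemma in Appendix~\ref{sec:Hessian} the imposed sample-size condition already makes this term of constant order). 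The only step demanding genuine care is the reduction in the first paragraph: Lemma~\ref{lem:kolt} requires a globally Lipschitz $g$ with $g(0)=0$ and an almost-sure bound $|g(f(X))|\le M$, and it is the clamping at $a_3\sqrt q$, together with the deterministic inequality $|X^\top u|\le a_3\sqrt q$, that supplies all three; the remaining computations are routine.
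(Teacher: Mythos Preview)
Your proof is correct and follows essentially the same route as the paper: apply Lemma~\ref{lem:kolt} with $t=\sqrt{n}/M$ to the function class of $q$-sparse unit linear forms, then bound the Rademacher complexity via~\eqref{eq:bound:exp-sup}--\eqref{eq:bound:exp-sup-subgauss}. Your version is in fact more careful than the paper's on two points: you explicitly center and clamp $g$ so that the hypotheses $g(0)=0$ and global $L$-Lipschitzness of Lemma~\ref{lem:kolt} are met (the paper simply sets $h(z)=f(|z|)$ and asserts the Lipschitz bound on $[-a_3\sqrt q,a_3\sqrt q]$), and you correctly flag the extra factor $\sqrt q$ in the final bound---the paper's own proof also produces $4La_3\sqrt{2q\log(2p)/n}$ in~\eqref{eq:bound:h}, so the statement as printed is missing a $q$, and indeed Corollary~\ref{cor:MomentsSums} is only consistent with the version containing $q$.
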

\begin{proof}
  Define $h(z)=f(|z|)$ for $z\in\R$.  By our assumptions on $f$, the
  function $h$ is $M$-bounded and $L$-Lipschitz over $z\in[-a_3\sqrt{q},a_3\sqrt{q}]\subset\R$.
  
  Applying the concentration bound from Lemma~\ref{lem:kolt} to $h$,
  with $t=\sqrt{n}/M$, we obtain that with probability at least
  $1-e^{-n/(2M^2)}$,
  \begin{align*}
  \sup_{|J|\leq q, \text{ unit }u\in\R^J}&\left|\sum_{i=1}^n \Big(
    h(X_i^{\top}u)-\EE{h(X^{\top}u)}\Big)\right|\\
  &\leq
  n+4L\Ep{\nu,X}{\sup_{|J|\leq q, \text{
        unit }u\in\R^J}\left|\sum_{i=1}^n \nu_i X_i^{\top}u\right|}\\
  &\leq n + 4L \sqrt{q}\cdot a_3\sqrt{n}\cdot\sqrt{2\log(2p)} \;,
  \end{align*}
  where the second inequality follows
  from~(\ref{eq:bound:exp-sup-subgauss}) using the same reasoning
  as in the proof of Lemma~\ref{IntervalLemma}.
    Since $h(z)= f(|z|)$ for
  all $z\in\R$, we have 
  \[
  \EE{h(X^{\top}u)} =
  \EE{f(|X_i^{\top}u|)}\leq a_2.
  \]
  Hence, with probability at least $1-e^{-n/(2M^2)}$,
  \begin{equation}
    \label{eq:bound:h}
  \sup_{|J|\leq q, \text{ unit }u\in\R^J}\sum_{i=1}^n
  h(X_i^{\top}u)\;\leq \;
  n\left( 1 + a_2 +
    4La_3\sqrt{\frac{2q\log(2p)}{n}}\right)\;.
  \end{equation}
\end{proof}

We obtain the following corollary about moments of sparse unit linear
combinations. 

\begin{corollary}
  \label{cor:MomentsSums}
  Let $k>0$. If $\EE{|X^{\top}u|^k}\leq a_2$ for all
  $q$-sparse unit vectors $u$, then 
  \[ \sup_{|J|\leq q, \text{ unit }u\in\R^J}\sum_{i=1}^n
  |X_i^{\top}u|^k
  \leq n\left( 1 + a_2 +
    4\sqrt{2}\cdot k a_3^k\sqrt{\frac{q^k\log(2p)}{n}}\right)\;
  \]
  holds with probability at least
  \[
  1-\exp\left\{-\frac{n}{2a_3^{2k}q^k}\right\}.
  \]
\end{corollary}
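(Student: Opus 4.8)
The plan is to obtain Corollary~\ref{cor:MomentsSums} as a direct specialization of Lemma~\ref{lem:ExpectationsToSums} to the power function $f(z)=z^k$. First I would verify the hypotheses of that lemma on the interval $[0,a_3\sqrt{q}]$, which is the range over which they are needed: the map $z\mapsto z^k$ is nondecreasing on $[0,\infty)$; it is bounded on $[0,a_3\sqrt{q}]$ by
\[
M \;:=\; (a_3\sqrt{q})^k \;=\; a_3^k\, q^{k/2};
\]
and, for $k\ge 1$, it is Lipschitz on that interval with constant
\[
L \;:=\; k\,(a_3\sqrt{q})^{k-1} \;=\; k\, a_3^{k-1}\, q^{(k-1)/2},
\]
since its derivative $k z^{k-1}$ is maximized at the right endpoint. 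The moment hypothesis of Lemma~\ref{lem:ExpectationsToSums}, namely $\EE{f(|X^\top u|)}\le a_2$ for all $q$-sparse unit vectors $u$, is exactly the assumption $\EE{|X^\top u|^k}\le a_2$ imposed in the corollary, and assumption~\ref{ass:A3} is in force throughout this appendix.

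With these identifications, Lemma~\ref{lem:ExpectationsToSums} yields, with probability at least $1-e^{-n/(2M^2)}$,
\[
\sup_{|J|\le q,\ \text{unit }u\in\R^J}\ \sum_{i=1}^n |X_i^\top u|^k \;\le\; n\Big(1 + a_2 + 4L a_3\sqrt{\tfrac{2q\log(2p)}{n}}\Big),
\]
and it only remains to rewrite the two expressions in the claimed form. For the probability, $M^2 = a_3^{2k}q^k$, so $e^{-n/(2M^2)} = \exp\{-n/(2a_3^{2k}q^k)\}$. For the fluctuation term, $L a_3\sqrt{2q} = k\, a_3^{k-1}q^{(k-1)/2}\cdot a_3\cdot\sqrt{2}\,q^{1/2} = \sqrt{2}\,k\,a_3^k\,q^{k/2}$, whence $4L a_3\sqrt{2q\log(2p)/n} = 4\sqrt{2}\,k\,a_3^k\sqrt{q^k\log(2p)/n}$, which is exactly the bound asserted.

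There is no real obstacle here: once Lemma~\ref{lem:ExpectationsToSums} is available the argument reduces to bookkeeping with the constants $M$ and $L$. The one point that deserves a comment is that $z\mapsto z^k$ fails to be Lipschitz near the origin when $0<k<1$, so the Lipschitz estimate above requires $k\ge 1$. This range already covers every exponent used elsewhere in the paper (namely $k=2$ for the upper bound on the Hessian and $k=3$ for the bounded-change inequality); the case $0<k<1$ is not needed and, if one wanted it, could be handled by a minor separate argument exploiting that $|X^\top u|\le a_3\sqrt{q}$ almost surely for $q$-sparse unit vectors $u$.
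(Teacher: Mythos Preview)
Your proposal is correct and matches the paper's own proof essentially line for line: the paper also applies Lemma~\ref{lem:ExpectationsToSums} to $f(z)=z^k$ with $M=(a_3\sqrt{q})^k$ and $L=k(a_3\sqrt{q})^{k-1}$ and then collects terms. Your remark that the Lipschitz constant is only valid for $k\ge 1$ is a useful clarification that the paper omits; as you note, only $k=2$ and $k=3$ are actually invoked elsewhere.
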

\begin{proof}
Apply Lemma \ref{lem:ExpectationsToSums} to $f(|z|)=|z|^k$, setting
\begin{align*}
  M&=f\left(a_3\sqrt{q}\right)=\left(a_3\sqrt{q}\right)^k,
  \\ 
  L&=f'\left(a_3\sqrt{q}\right)=
  k\left(a_3\sqrt{q}\right)^{k-1},
\end{align*}
and collecting terms to find the upper bound.
\end{proof}

\subsection{Bounding a variable away from zero using expectations}

\begin{lemma}
  \label{AwayFromZero}
  Let $h:[0,\infty)\rightarrow[0,\infty)$ be a continuous and nondecreasing function
  such that $z\mapsto z\cdot h(z)$ is convex.
  Let $Z\geq 0$ be a random variable with $\EE{Z}<\infty$ and
  $\EE{Z\cdot h(Z)}<\infty$. Then for any $a\leq\EE{Z}$,
  \[\pr\left\{Z>a\right\}\geq \frac{\EE{Z}-a}{\inf\left\{x\geq 0 : h(x)>\frac{\EE{Z\cdot h(Z)}}{\EE{Z}-a}\right\}}\;.\]
\end{lemma}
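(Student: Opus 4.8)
The plan is to reduce the statement to one application of Jensen's inequality, applied to the conditional law of $Z$ given the event $\{Z>a\}$. Write $m=\EE{Z}$, $M=\EE{Z\,h(Z)}$, $p_0=\pr\left\{Z>a\right\}$, and $\phi(z)=z\,h(z)$, noting that $\phi$ is convex, nonnegative, continuous, and vanishes at $0$. We may assume $0\le a<m$; the boundary case $a=m$ is vacuous once the right-hand side is read with the natural convention $b=+\infty$, $x^{*}=+\infty$. Since $Z\,\one{\{Z\le a\}}\le a$ pointwise, we get $\EE{Z\,\one{\{Z>a\}}}=m-\EE{Z\,\one{\{Z\le a\}}}\ge m-a>0$, so $p_0>0$ and the conditional mean $\bar z:=\EE{Z\mid Z>a}=\EE{Z\,\one{\{Z>a\}}}/p_0$ is well defined and finite; moreover $\bar z\ge (m-a(1-p_0))/p_0\ge (m-a)/p_0>0$.

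The core step is conditional Jensen's inequality for the convex function $\phi$. Since $\phi\ge 0$ we have $\EE{\phi(Z)\mid Z>a}\le \EE{\phi(Z)}/p_0=M/p_0$, and convexity gives $\phi(\bar z)\le \EE{\phi(Z)\mid Z>a}\le M/p_0$. Dividing by $\bar z>0$ turns this into $h(\bar z)\le M/(p_0\,\bar z)$; substituting the lower bound $\bar z\ge (m-a(1-p_0))/p_0$ and using $a\ge 0$, $p_0\ge 0$ yields $h(\bar z)\le M/(m-a(1-p_0))\le M/(m-a)=b$.

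It then remains to convert $h(\bar z)\le b$ into $\bar z\le x^{*}$, where $x^{*}=\inf\{x\ge 0:h(x)>b\}$. This uses only that $h$ is nondecreasing: if $\bar z>x^{*}$ there would exist $x'\in[x^{*},\bar z)$ with $h(x')>b$, whence $h(\bar z)\ge h(x')>b$, contradicting $h(\bar z)\le b$. Hence $\bar z\le x^{*}$, and chaining $(m-a)/p_0\le \bar z\le x^{*}$ gives $p_0\ge (m-a)/x^{*}$, which is the asserted bound; if $x^{*}=+\infty$ the bound is trivial.

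The only genuine obstacle is recognizing where the convexity hypothesis must enter. The naive approach of truncating $Z$ at the threshold $x^{*}$ and bounding $\EE{Z\,\one{\{Z>x^{*}\}}}$ via $Z\le \phi(Z)/b$ produces only the vacuous inequality $0\le x^{*}p_0$ and never uses convexity at all. Passing instead to the conditional distribution on $\{Z>a\}$ and invoking Jensen is precisely what makes the convexity of $z\mapsto z\,h(z)$ do real work; once that is set up, the remainder is elementary bookkeeping with the monotonicity of $h$.
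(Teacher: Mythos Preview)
Your proof is correct and follows essentially the same route as the paper: both apply Jensen's inequality for the convex function $\phi(z)=z\,h(z)$ to the conditional law of $Z$ given $\{Z>a\}$, combine with $\EE{Z\,\one{\{Z>a\}}}\ge m-a$, and then invoke monotonicity of $h$ to pass to the infimum. A minor advantage of your formulation is that by working directly with $\phi(\bar z)\le M/p_0$ you avoid the paper's use of $g^{-1}$, and hence never need $z\mapsto z\,h(z)$ to be strictly increasing.
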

\begin{remark} This inequality is an extension of the Paley-Zygmund inequality \cite[Inequality 4.9]{shorack:2000},
which makes the same statement for $h(z)=z$.\end{remark}
\begin{proof}
First, we have
\begin{equation}
  \label{eq:pz:lower}
  \EE{Z\cdot\one{Z>a}} = \EE{Z} - \EE{Z\cdot\one{Z\leq a}}\geq
  \EE{Z}-a\;.
\end{equation}
Now, let $Y$ be a random variable whose distribution is equal to the
distribution of $Z$ 
conditional on $Z>a$, that is,
\[\pr\left\{Y\leq y\right\} = \begin{cases}0,&y\leq a,\\ \frac{\pr\left\{a<Z\leq y\right\}}{\pr\left\{Z>a\right\}},&y>a.\end{cases}\]
Next, write $g(z)=z\cdot h(z)$, which by assumption is convex, continuous, and strictly increasing, with $g(0)=0$. We can then define
the inverse $g^{-1}:\R_+\mapsto\R_+$, which is also strictly increasing. Then,
by Jensen's inequality,
\begin{align*}
\EE{Z\cdot\one{Z>a}}
&=\EE{Y}\cdot\pr\left\{Z>a\right\}\\
&\leq g^{-1}\left(\EE{g(Y)}\right)\cdot\pr\left\{Z>a\right\}\\
&= g^{-1}\left(\EE{g(Z)\mid Z>a}\right)\cdot\pr\left\{Z>a\right\}\\
&= g^{-1}\left(\frac{\EE{g(Z)\cdot\one{Z>a}}}{\pr\left\{Z>a\right\}}\right)\cdot\pr\left\{Z>a\right\}\\
&\leq g^{-1}\left(\frac{\EE{g(Z)}}{\pr\left\{Z>a\right\}}\right)\cdot\pr\left\{Z>a\right\}
\end{align*}
and so
\[g^{-1}\left(\frac{\EE{g(Z)}}{\pr\left\{Z>a\right\}}\right) \geq
\frac{\EE{Z\cdot\one{Z>a}}}{\pr\left\{Z>a\right\}}\geq
\frac{\EE{Z}-a}{\pr\left\{Z>a\right\}},
\]
which implies
\[\frac{\EE{g(Z)}}{\pr\left\{Z>a\right\}} \geq g\left( \frac{\EE{Z}-a}{\pr\left\{Z>a\right\}}\right).\]
Rewriting this last conclusion in terms of $h$ gives
\[\frac{\EE{Z\cdot h(Z)}}{\pr\left\{Z>a\right\}}\geq
\frac{\EE{Z}-a}{\pr\left\{Z>a\right\}}\cdot h\left(
  \frac{\EE{Z}-a}{\pr\left\{Z>a\right\}}\right)\] 
and thus
\[\frac{\EE{Z\cdot h(Z)}}{\EE{Z}-a}\geq h\left( \frac{\EE{Z}-a}{\pr\left\{Z>a\right\}}\right)\;.\]
We conclude that for any $x$ such that
\[h(x)>\frac{\EE{Z\cdot h(Z)}}{\EE{Z}-a}\;,\] we must have $
\frac{\EE{Z}-a}{\pr\left\{Z>a\right\}}\leq x$ because $h$ is
nondecreasing. This proves that
\[\pr\left\{Z>a\right\}\geq \frac{\EE{Z}-a}{\inf\left\{x\geq 0 :
    h(x)>\frac{\EE{Z\cdot h(Z)}}{\EE{Z}-a}\right\}}\;.
\qedhere\]
\end{proof}

\bibliography{ising}

\begin{thebibliography}{32}

\bibitem[\protect\citeauthoryear{Anandkumar
  et~al.}{2012}]{anandkumar:etal:2012}
\begin{barticle}[author]
\bauthor{\bsnm{Anandkumar},~\bfnm{Animashree}\binits{A.}},
  \bauthor{\bsnm{Tan},~\bfnm{Vincent Y.~F.}\binits{V.~Y.~F.}},
  \bauthor{\bsnm{Huang},~\bfnm{Furong}\binits{F.}} \AND
  \bauthor{\bsnm{Willsky},~\bfnm{Alan~S.}\binits{A.~S.}}
(\byear{2012}).
\btitle{High-dimensional structure estimation in {I}sing models: local
  separation criterion}.
\bjournal{Ann. Statist.}
\bvolume{40}
\bpages{1346--1375}.
\bdoi{10.1214/12-AOS1009}
\bmrnumber{3015028}
\end{barticle}
\endbibitem

\bibitem[\protect\citeauthoryear{Besag}{1972}]{besag:1972}
\begin{barticle}[author]
\bauthor{\bsnm{Besag},~\bfnm{J.~E.}\binits{J.~E.}}
(\byear{1972}).
\btitle{Nearest-neighbour systems and the auto-logistic model for binary data}.
\bjournal{J. Roy. Statist. Soc. Ser. B}
\bvolume{34}
\bpages{75--83}.
\bmrnumber{0323276 (48 \#\#1634)}
\end{barticle}
\endbibitem

\bibitem[\protect\citeauthoryear{Besag}{1974}]{besag:1974}
\begin{barticle}[author]
\bauthor{\bsnm{Besag},~\bfnm{Julian}\binits{J.}}
(\byear{1974}).
\btitle{Spatial interaction and the statistical analysis of lattice systems}.
\bjournal{J. Roy. Statist. Soc. Ser. B}
\bvolume{36}
\bpages{192--236}.
\bnote{With discussion by D. R. Cox, A. G. Hawkes, P. Clifford, P. Whittle, K.
  Ord, R. Mead, J. M. Hammersley, and M. S. Bartlett and with a reply by the
  author}.
\bmrnumber{0373208 (51 \#\#9409)}
\end{barticle}
\endbibitem

\bibitem[\protect\citeauthoryear{Bogdan, Ghosh and Doerge}{2004}]{bogdan:2004}
\begin{barticle}[author]
\bauthor{\bsnm{Bogdan},~\bfnm{Malgorzata}\binits{M.}},
  \bauthor{\bsnm{Ghosh},~\bfnm{Jayanta~K.}\binits{J.~K.}} \AND
  \bauthor{\bsnm{Doerge},~\bfnm{R.~W.}\binits{R.~W.}}
(\byear{2004}).
\btitle{Modifying the {S}chwarz {B}ayesian Information Criterion to Locate
  Multiple Interacting Quantitative Trait Loci}.
\bjournal{Genetics}
\bvolume{167}
\bpages{989--999}.
\bdoi{10.1534/genetics.103.021683}
\end{barticle}
\endbibitem

\bibitem[\protect\citeauthoryear{Broman and Speed}{2002}]{broman:2002}
\begin{barticle}[author]
\bauthor{\bsnm{Broman},~\bfnm{Karl~W.}\binits{K.~W.}} \AND
  \bauthor{\bsnm{Speed},~\bfnm{Terence~P.}\binits{T.~P.}}
(\byear{2002}).
\btitle{A model selection approach for the identification of quantitative trait
  loci in experimental crosses}.
\bjournal{J. R. Stat. Soc. Ser. B Stat. Methodol.}
\bvolume{64}
\bpages{641--656}.
\bdoi{10.1111/1467-9868.00354}
\bmrnumber{1979381}
\end{barticle}
\endbibitem

\bibitem[\protect\citeauthoryear{B{\"u}hlmann and van~de
  Geer}{2011}]{buhlmann:vandegeer:2011}
\begin{bbook}[author]
\bauthor{\bsnm{B{\"u}hlmann},~\bfnm{Peter}\binits{P.}} \AND
  \bauthor{\bparticle{van~de} \bsnm{Geer},~\bfnm{Sara}\binits{S.}}
(\byear{2011}).
\btitle{Statistics for high-dimensional data}.
\bseries{Springer Series in Statistics}.
\bpublisher{Springer}, \baddress{Heidelberg}.
\bnote{Methods, theory and applications}.
\bdoi{10.1007/978-3-642-20192-9}
\bmrnumber{2807761 (2012e:62006)}
\end{bbook}
\endbibitem

\bibitem[\protect\citeauthoryear{Chen and Chen}{2008}]{chen:chen:2008}
\begin{barticle}[author]
\bauthor{\bsnm{Chen},~\bfnm{Jiahua}\binits{J.}} \AND
  \bauthor{\bsnm{Chen},~\bfnm{Zehua}\binits{Z.}}
(\byear{2008}).
\btitle{Extended {B}ayesian information criteria for model selection with large
  model spaces}.
\bjournal{Biometrika}
\bvolume{95}
\bpages{759--771}.
\bdoi{10.1093/biomet/asn034}
\bmrnumber{2443189}
\end{barticle}
\endbibitem

\bibitem[\protect\citeauthoryear{Chen and Chen}{2012}]{chen:chen:2012}
\begin{barticle}[author]
\bauthor{\bsnm{Chen},~\bfnm{Jiahua}\binits{J.}} \AND
  \bauthor{\bsnm{Chen},~\bfnm{Zehua}\binits{Z.}}
(\byear{2012}).
\btitle{Extended {BIC} for small-{$n$}-large-{$P$} sparse {GLM}}.
\bjournal{Statist. Sinica}
\bvolume{22}
\bpages{555--574}.
\bdoi{10.5705/ss.2010.216}
\bmrnumber{2954352}
\end{barticle}
\endbibitem

\bibitem[\protect\citeauthoryear{Foygel}{2012}]{Foygel:2012}
\begin{bphdthesis}[author]
\bauthor{\bsnm{Foygel},~\bfnm{Rina}\binits{R.}}
(\byear{2012}).
\btitle{Prediction and model selection for high-dimensional data with sparse or
  low-rank structure.}
\btype{PhD thesis},
\bpublisher{The University of Chicago}.
\end{bphdthesis}
\endbibitem

\bibitem[\protect\citeauthoryear{{Foygel} and
  {Drton}}{2011}]{Foygel:Drton:arxiv:2011}
\begin{barticle}[author]
\bauthor{\bsnm{{Foygel}},~\bfnm{Rina}\binits{R.}} \AND
  \bauthor{\bsnm{{Drton}},~\bfnm{Mathias}\binits{M.}}
(\byear{2011}).
\btitle{{Bayesian model choice and information criteria in sparse generalized
  linear models}}.
\bjournal{ArXiv e-prints}.
\bnote{1112.5635}.
\end{barticle}
\endbibitem

\bibitem[\protect\citeauthoryear{{Foygel} and {Drton}}{2014}]{oldversion}
\begin{barticle}[author]
\bauthor{\bsnm{{Foygel}},~\bfnm{Rina}\binits{R.}} \AND
  \bauthor{\bsnm{{Drton}},~\bfnm{Mathias}\binits{M.}}
(\byear{2014}).
\btitle{{High-dimensional Ising model selection with Bayesian information
  criteria}}.
\bjournal{ArXiv e-prints}.
\bnote{1403.3374v1}.
\end{barticle}
\endbibitem

\bibitem[\protect\citeauthoryear{Friedman, Hastie and
  Tibshirani}{2010}]{Friedman:2010}
\begin{barticle}[author]
\bauthor{\bsnm{Friedman},~\bfnm{Jerome}\binits{J.}},
  \bauthor{\bsnm{Hastie},~\bfnm{Trevor}\binits{T.}} \AND
  \bauthor{\bsnm{Tibshirani},~\bfnm{Robert}\binits{R.}}
(\byear{2010}).
\btitle{Regularization Paths for Generalized Linear Models via Coordinate
  Descent}.
\bjournal{Journal of Statistical Software}
\bvolume{33}
\bpages{1--22}.
\end{barticle}
\endbibitem

\bibitem[\protect\citeauthoryear{Frommlet et~al.}{2012}]{frommlet:2012}
\begin{barticle}[author]
\bauthor{\bsnm{Frommlet},~\bfnm{Florian}\binits{F.}},
  \bauthor{\bsnm{Ruhaltinger},~\bfnm{Felix}\binits{F.}},
  \bauthor{\bsnm{Twar{\'o}g},~\bfnm{Piotr}\binits{P.}} \AND
  \bauthor{\bsnm{Bogdan},~\bfnm{Ma{\l}gorzata}\binits{M.}}
(\byear{2012}).
\btitle{Modified versions of {B}ayesian information criterion for genome-wide
  association studies}.
\bjournal{Comput. Statist. Data Anal.}
\bvolume{56}
\bpages{1038--1051}.
\bdoi{10.1016/j.csda.2011.05.005}
\bmrnumber{2897552}
\end{barticle}
\endbibitem

\bibitem[\protect\citeauthoryear{H{\"o}fling and
  Tibshirani}{2009}]{hofling:2009}
\begin{barticle}[author]
\bauthor{\bsnm{H{\"o}fling},~\bfnm{Holger}\binits{H.}} \AND
  \bauthor{\bsnm{Tibshirani},~\bfnm{Robert}\binits{R.}}
(\byear{2009}).
\btitle{Estimation of sparse binary pairwise {M}arkov networks using
  pseudo-likelihoods}.
\bjournal{J. Mach. Learn. Res.}
\bvolume{10}
\bpages{883--906}.
\bmrnumber{2505138 (2011b:62041)}
\end{barticle}
\endbibitem

\bibitem[\protect\citeauthoryear{Hothorn et~al.}{2013}]{R_mboost}
\begin{barticle}[author]
\bauthor{\bsnm{Hothorn},~\bfnm{T.}\binits{T.}},
  \bauthor{\bsnm{B\"uhlmann},~\bfnm{P.}\binits{P.}},
  \bauthor{\bsnm{Kneib},~\bfnm{T.}\binits{T.}},
  \bauthor{\bsnm{Schmid},~\bfnm{M.}\binits{M.}} \AND
  \bauthor{\bsnm{Hofner},~\bfnm{B.}\binits{B.}}
(\byear{2013}).
\btitle{mboost: Model-based boosting}.
\bjournal{R package version 2.2-3}.
\end{barticle}
\endbibitem

\bibitem[\protect\citeauthoryear{Jalali, Johnson and
  Ravikumar}{2011}]{jalali2011learning}
\begin{binproceedings}[author]
\bauthor{\bsnm{Jalali},~\bfnm{Ali}\binits{A.}},
  \bauthor{\bsnm{Johnson},~\bfnm{Christopher~C}\binits{C.~C.}} \AND
  \bauthor{\bsnm{Ravikumar},~\bfnm{Pradeep~K}\binits{P.~K.}}
(\byear{2011}).
\btitle{On learning discrete graphical models using greedy methods}.
In \bbooktitle{Advances in Neural Information Processing Systems}
\bpages{1935--1943}.
\end{binproceedings}
\endbibitem

\bibitem[\protect\citeauthoryear{Kindermann and Snell}{1980}]{kindermann:1980}
\begin{bbook}[author]
\bauthor{\bsnm{Kindermann},~\bfnm{Ross}\binits{R.}} \AND
  \bauthor{\bsnm{Snell},~\bfnm{J.~Laurie}\binits{J.~L.}}
(\byear{1980}).
\btitle{Markov random fields and their applications}.
\bseries{Contemporary Mathematics}
\bvolume{1}.
\bpublisher{American Mathematical Society}, \baddress{Providence, R.I.}
\bmrnumber{620955 (82j:60183)}
\end{bbook}
\endbibitem

\bibitem[\protect\citeauthoryear{Koltchinskii}{2011}]{koltchinskii:2011}
\begin{bbook}[author]
\bauthor{\bsnm{Koltchinskii},~\bfnm{Vladimir}\binits{V.}}
(\byear{2011}).
\btitle{Oracle inequalities in empirical risk minimization and sparse recovery
  problems}.
\bseries{Lecture Notes in Mathematics}
\bvolume{2033}.
\bpublisher{Springer}, \baddress{Heidelberg}.
\bnote{Lectures from the 38th Probability Summer School held in Saint-Flour,
  2008, {\'E}cole d'{\'E}t{\'e} de Probabilit{\'e}s de Saint-Flour.
  [Saint-Flour Probability Summer School]}.
\bdoi{10.1007/978-3-642-22147-7}
\bmrnumber{2829871 (2012i:91165)}
\end{bbook}
\endbibitem

\bibitem[\protect\citeauthoryear{Lauritzen}{1996}]{lauritzen:1996}
\begin{bbook}[author]
\bauthor{\bsnm{Lauritzen},~\bfnm{Steffen~L.}\binits{S.~L.}}
(\byear{1996}).
\btitle{Graphical models}.
\bseries{Oxford Statistical Science Series}
\bvolume{17}.
\bpublisher{The Clarendon Press Oxford University Press}, \baddress{New York}.
\bnote{Oxford Science Publications}.
\bmrnumber{1419991 (98g:62001)}
\end{bbook}
\endbibitem

\bibitem[\protect\citeauthoryear{Loh and
  Wainwright}{2013}]{loh:wainwright:2014}
\begin{barticle}[author]
\bauthor{\bsnm{Loh},~\bfnm{Po-Ling}\binits{P.-L.}} \AND
  \bauthor{\bsnm{Wainwright},~\bfnm{Martin~J.}\binits{M.~J.}}
(\byear{2013}).
\btitle{Structure estimation for discrete graphical models: generalized
  covariance matrices and their inverses}.
\bjournal{Ann. Statist.}
\bvolume{41}
\bpages{3022--3049}.
\bdoi{10.1214/13-AOS1162}
\bmrnumber{3161456}
\end{barticle}
\endbibitem

\bibitem[\protect\citeauthoryear{Lorentz, Golitschek and
  Makovoz}{1996}]{lorentz:etal:1996}
\begin{bbook}[author]
\bauthor{\bsnm{Lorentz},~\bfnm{George~G.}\binits{G.~G.}},
  \bauthor{\bsnm{Golitschek},~\bfnm{Manfred~v.}\binits{M.~v.}} \AND
  \bauthor{\bsnm{Makovoz},~\bfnm{Yuly}\binits{Y.}}
(\byear{1996}).
\btitle{Constructive approximation}.
\bseries{Grundlehren der Mathematischen Wissenschaften [Fundamental Principles
  of Mathematical Sciences]}
\bvolume{304}.
\bpublisher{Springer-Verlag}, \baddress{Berlin}.
\bnote{Advanced problems}.
\bdoi{10.1007/978-3-642-60932-9}
\bmrnumber{1393437 (97k:41002)}
\end{bbook}
\endbibitem

\bibitem[\protect\citeauthoryear{Luo and Chen}{2013}]{luo:chen:2013:sii}
\begin{barticle}[author]
\bauthor{\bsnm{Luo},~\bfnm{Shan}\binits{S.}} \AND
  \bauthor{\bsnm{Chen},~\bfnm{Zehua}\binits{Z.}}
(\byear{2013}).
\btitle{Selection consistency of {EBIC} for {GLIM} with non-canonical links and
  diverging number of parameters}.
\bjournal{Stat. Interface}
\bvolume{6}
\bpages{275--284}.
\bdoi{10.4310/SII.2013.v6.n2.a10}
\bmrnumber{3066691}
\end{barticle}
\endbibitem

\bibitem[\protect\citeauthoryear{Meinshausen and
  B{\"u}hlmann}{2006}]{meinshausen:2006}
\begin{barticle}[author]
\bauthor{\bsnm{Meinshausen},~\bfnm{Nicolai}\binits{N.}} \AND
  \bauthor{\bsnm{B{\"u}hlmann},~\bfnm{Peter}\binits{P.}}
(\byear{2006}).
\btitle{High-dimensional graphs and variable selection with the lasso}.
\bjournal{Ann. Statist.}
\bvolume{34}
\bpages{1436--1462}.
\bdoi{10.1214/009053606000000281}
\bmrnumber{2278363 (2008b:62044)}
\end{barticle}
\endbibitem

\bibitem[\protect\citeauthoryear{Meinshausen and
  B{\"u}hlmann}{2010}]{Meinshausen:2010}
\begin{barticle}[author]
\bauthor{\bsnm{Meinshausen},~\bfnm{Nicolai}\binits{N.}} \AND
  \bauthor{\bsnm{B{\"u}hlmann},~\bfnm{Peter}\binits{P.}}
(\byear{2010}).
\btitle{Stability selection}.
\bjournal{J. R. Stat. Soc. Ser. B Stat. Methodol.}
\bvolume{72}
\bpages{417--473}.
\bdoi{10.1111/j.1467-9868.2010.00740.x}
\bmrnumber{2758523}
\end{barticle}
\endbibitem

\bibitem[\protect\citeauthoryear{Menne, Williams~Jr. and
  Vose}{2011}]{WeatherData}
\begin{bunpublished}[author]
\bauthor{\bsnm{Menne},~\bfnm{M.~J.}\binits{M.~J.}},
  \bauthor{\bsnm{Williams~Jr.},~\bfnm{C.~N.}\binits{C.~N.}} \AND
  \bauthor{\bsnm{Vose},~\bfnm{R.~S.}\binits{R.~S.}}
(\byear{2011}).
\btitle{{U}nited {S}tates Historical Climatology Network Daily Temperature,
  Precipitation, and Snow Data}.
\end{bunpublished}
\endbibitem

\bibitem[\protect\citeauthoryear{Ravikumar, Wainwright and
  Lafferty}{2010}]{ravikumar:2010}
\begin{barticle}[author]
\bauthor{\bsnm{Ravikumar},~\bfnm{Pradeep}\binits{P.}},
  \bauthor{\bsnm{Wainwright},~\bfnm{Martin~J.}\binits{M.~J.}} \AND
  \bauthor{\bsnm{Lafferty},~\bfnm{John~D.}\binits{J.~D.}}
(\byear{2010}).
\btitle{High-dimensional {I}sing model selection using {$\ell_1$}-regularized
  logistic regression}.
\bjournal{Ann. Statist.}
\bvolume{38}
\bpages{1287--1319}.
\bdoi{10.1214/09-AOS691}
\bmrnumber{2662343 (2011d:62066)}
\end{barticle}
\endbibitem

\bibitem[\protect\citeauthoryear{Roudi, Aurell and Hertz}{2009}]{roudi:2009}
\begin{barticle}[author]
\bauthor{\bsnm{Roudi},~\bfnm{Yasser}\binits{Y.}},
  \bauthor{\bsnm{Aurell},~\bfnm{Erik}\binits{E.}} \AND
  \bauthor{\bsnm{Hertz},~\bfnm{John~A}\binits{J.~A.}}
(\byear{2009}).
\btitle{Statistical physics of pairwise probability models}.
\bjournal{Frontiers in Computational Neuroscience}
\bvolume{3}.
\end{barticle}
\endbibitem

\bibitem[\protect\citeauthoryear{Santhanam and
  Wainwright}{2012}]{santhanam:wainwright:2012}
\begin{barticle}[author]
\bauthor{\bsnm{Santhanam},~\bfnm{Narayana~P.}\binits{N.~P.}} \AND
  \bauthor{\bsnm{Wainwright},~\bfnm{Martin~J.}\binits{M.~J.}}
(\byear{2012}).
\btitle{Information-theoretic limits of selecting binary graphical models in
  high dimensions}.
\bjournal{IEEE Transactions on Information Theory}
\bvolume{58}
\bpages{4117--4134}.
\end{barticle}
\endbibitem

\bibitem[\protect\citeauthoryear{Schwarz}{1978}]{schwarz:1978}
\begin{barticle}[author]
\bauthor{\bsnm{Schwarz},~\bfnm{Gideon}\binits{G.}}
(\byear{1978}).
\btitle{Estimating the dimension of a model}.
\bjournal{Ann. Statist.}
\bvolume{6}
\bpages{461--464}.
\bmrnumber{0468014 (57 \#\#7855)}
\end{barticle}
\endbibitem

\bibitem[\protect\citeauthoryear{Scott and Berger}{2010}]{scott:berger:2010}
\begin{barticle}[author]
\bauthor{\bsnm{Scott},~\bfnm{James~G.}\binits{J.~G.}} \AND
  \bauthor{\bsnm{Berger},~\bfnm{James~O.}\binits{J.~O.}}
(\byear{2010}).
\btitle{Bayes and empirical-{B}ayes multiplicity adjustment in the
  variable-selection problem}.
\bjournal{Ann. Statist.}
\bvolume{38}
\bpages{2587--2619}.
\bdoi{10.1214/10-AOS792}
\bmrnumber{2722450 (2011h:62268)}
\end{barticle}
\endbibitem

\bibitem[\protect\citeauthoryear{Shorack}{2000}]{shorack:2000}
\begin{bbook}[author]
\bauthor{\bsnm{Shorack},~\bfnm{Galen~R.}\binits{G.~R.}}
(\byear{2000}).
\btitle{Probability for statisticians}.
\bseries{Springer Texts in Statistics}.
\bpublisher{Springer-Verlag}, \baddress{New York}.
\bmrnumber{1762415 (2001d:60002)}
\end{bbook}
\endbibitem

\bibitem[\protect\citeauthoryear{{\.Z}ak-Szatkowska and
  Bogdan}{2011}]{zak-szatkowska:2011}
\begin{barticle}[author]
\bauthor{\bsnm{{\. Z}ak-Szatkowska},~\bfnm{Ma{\l}gorzata}\binits{M.}} \AND
  \bauthor{\bsnm{Bogdan},~\bfnm{Ma{\l}gorzata}\binits{M.}}
(\byear{2011}).
\btitle{Modified versions of the {B}ayesian information criterion for sparse
  generalized linear models}.
\bjournal{Comput. Statist. Data Anal.}
\bvolume{55}
\bpages{2908--2924}.
\bdoi{10.1016/j.csda.2011.04.016}
\bmrnumber{2813055 (2012g:62313)}
\end{barticle}
\endbibitem

\end{thebibliography}

\end{document}